\newtheorem{thm}{Theorem}[section]
\newtheorem{cor}[thm]{Corollary}
\newtheorem{lem}[thm]{Lemma}
\newtheorem{prop}[thm]{Proposition}
\theoremstyle{definition}
\newtheorem{defn}[thm]{Definition}
\newtheorem{prob}[thm]{Problem}
\theoremstyle{remark}
\numberwithin{equation}{section}
\newcommand{\N}{\mathbb{N}}
\newcommand{\Z}{\mathbb{Z}}
\newcommand{\R}{\mathbb{R}}
\newcommand{\V}{\mathrm{V}}
\newcommand{\E}{\mathrm{E}}
\newcommand{\RP}{\mathrm{RPast}}
\newcommand{\acts}{\curvearrowright}
\newcommand{\Sym}{\mathrm{Sym}}
\newcommand{\sH}{\mathrm{H}}
\newcommand{\f}{f}
\newcommand{\vf}{\hat{f}}
\newcommand{\cex}{\mathcal{E}}
\newcommand{\Int}{\mathrm{Int}}
\newcommand{\Ext}{\mathrm{Ext}}
\renewcommand{\:}{\,:\,}
\begin{document}

\title{A subgroup formula for f-invariant entropy}
\author{Brandon Seward}
\address{Department of Mathematics, University of Michigan, 530 Church Street, Ann Arbor, MI 48109, U.S.A.}
\email{b.m.seward@gmail.com}
\keywords{f-invariant, entropy, free group, subgroup, sofic, virtual measure conjugacy}

\begin{abstract}
We study a measure entropy for finitely generated free group actions called f-invariant entropy. The f-invariant entropy was developed by Lewis Bowen and is essentially a special case of his measure entropy theory for actions of sofic groups. In this paper we relate the f-invariant entropy of a finitely generated free group action to the f-invariant entropy of the restricted action of a subgroup. We show that the ratio of these entropies equals the index of the subgroup. This generalizes a well known formula for the Kolmogorov--Sinai entropy of amenable group actions. We then extend the definition of f-invariant entropy to actions of finitely generated virtually free groups. We also obtain a numerical virtual measure conjugacy invariant for actions of finitely generated virtually free groups.
\end{abstract}
\maketitle

\section{Introduction}

Recently Lewis Bowen \cite{B10a} defined a numerical measure conjugacy invariant for actions of finitely generated free groups, called f-invariant entropy. The f-invariant entropy is relatively easy to calculate, has strong similarities with the classical Kolmogorov--Sinai entropy of actions of amenable groups, and in fact agrees with the classical Kolmogorov--Sinai entropy when the finitely generated free group is just $\Z$. Moreover, f-invariant entropy is essentially a special, simpler case of the recently emerging entropy theory of sofic group actions being developed by Bowen (\cite{B10b}, \cite{B10c}, \cite{Ba}), Kerr--Li (\cite{KL}, \cite{KL11a}, \cite{KL11b}), Kerr (\cite{Ke}), and others (\cite{C}, \cite{Z}, \cite{ZC}). The classical Kolmogorov--Sinai entropy has unquestionably been a fundamental and powerful tool in the study of actions of amenable groups, and f-invariant entropy seems posed to take a similar role in the study of actions of finitely generated free groups. Bowen has already used f-invariant entropy to classify most Bernoulli shifts over finitely generated free groups up to measure conjugacy \cite{B10a}, and the classical Abramov--Rohlin and (under a few assumptions) Juzvinskii's addition formulas have been extended to actions of finitely generated free groups by Bowen \cite{B10d} and Bowen--Gutman \cite{BG}, respectively. However the theory surrounding f-invariant entropy is still quite young. The f-invariant entropy has been computed for a few specific examples and for a few special types of actions, but there has yet to emerge a thorough understanding of the behavior of f-invariant entropy in general. Furthermore, there is a significant lack of intuition relating to f-invariant entropy. In some cases f-invariant entropy behaves just like Kolmogorov--Sinai entropy, but in other cases it behaves in ways that are completely unprecedented. There is therefore a significant need to develop and understand the theory of f-invariant entropy. This paper serves as a piece of this large program. We focus here on the specific question as to what relationship there is, if any, between the f-invariant entropy of a group action and the f-invariant entropy of the restricted action of a subgroup.

Before stating the main theorem, we give a brief definition of f-invariant entropy. A more detailed treatment of the definition will be given in Section \ref{SEC FINV}. Let $G$ be a finitely generated free group, let $S$ be a free generating set for $G$, and let $G$ act on a standard probability space $(X, \mu)$ by measure preserving bijections. If $\alpha$ is a measurable partition of $X$ and $F \subseteq G$ is finite, then we define
$$F \cdot \alpha = \bigvee_{f \in F} f \cdot \alpha.$$
Recall that the \emph{Shannon entropy} of a countable measurable partition $\alpha$ of $X$ is
$$\sH(\alpha) = \sum_{A \in \alpha} -\mu(A) \cdot \log(\mu(A)).$$
Also recall that $\alpha$ is \emph{generating} if the smallest $G$-invariant $\sigma$-algebra containing $\alpha$ contains all measurable sets up to sets of measure zero. If there exists a generating partition $\alpha$ having finite Shannon entropy, then the f-invariant entropy of this action is defined to be
$$\f_G(X, \mu) = \lim_{n \rightarrow \infty} (1 - 2 r) \cdot \sH(B(n) \cdot \alpha) + \sum_{s \in S} \sH(s B(n) \cdot \alpha \vee B(n) \cdot \alpha),$$
where $r = |S|$ is the rank of $G$ and $B(n)$ is the ball of radius $n$ centered on $1_G$ with respect to the generating set $S$. Surprisingly, Bowen proved in \cite{B10a} and \cite{B10c} that the value $f_G(X, \mu)$ neither depends on the choice of free generating set $S$ nor on the choice of finite Shannon entropy generating partition $\alpha$. If there is no finite Shannon entropy generating partition for this action, then the f-invariant entropy is undefined.

Our main theorem is the following.

\begin{thm} \label{THM INTROMAIN}
Let $G$ be a finitely generated free group, and let $H \leq G$ be a subgroup of finite index. Let $G$ act on a standard probability space $(X, \mu)$ by measure preserving bijections, and let $H$ act on $(X, \mu)$ by restricting the action of $G$. Assume that the f-invariant entropy is defined for either the $G$ action or the $H$ action. Then the f-invariant entropy is defined for both actions and
$$\f_H(X, \mu) = |G : H| \cdot \f_G(X, \mu).$$
\end{thm}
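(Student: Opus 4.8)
The plan is to establish the two halves of the statement separately: first the qualitative claim that the f-invariant is defined for both actions as soon as it is defined for one, and then the quantitative identity $\f_H(X,\mu)=d\cdot\f_G(X,\mu)$, where I write $d:=|G:H|$. Recall that $\f_G$ is defined precisely when the $G$-action admits a generating partition of finite Shannon entropy, and likewise for $H$. If $\gamma$ is such a partition for the $H$-action, then the smallest $G$-invariant $\sigma$-algebra containing $\gamma$ contains the smallest $H$-invariant one, which is already everything modulo null sets; hence $\gamma$ is also finite-entropy generating for $G$, so $\f_G$ is defined. Conversely, fix a right transversal $T$ for $H$ in $G$ with $1_G\in T$, so $G=\bigsqcup_{t\in T}Ht$ and $|T|=d$, and let $\alpha$ be a finite-entropy generating partition for $G$. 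Setting $\beta:=T\cdot\alpha=\bigvee_{t\in T}t\cdot\alpha$ and writing each $g\in G$ uniquely as $g=ht$ with $h\in H$, $t\in T$, one gets $\bigvee_{g\in G}g\cdot\alpha=\bigvee_{h\in H}h\cdot\beta$, so $\beta$ is generating for the $H$-action, while $\sH(\beta)\le d\cdot\sH(\alpha)<\infty$. Thus $\f_H$ is defined, and by Bowen's independence results it may be computed using $\beta$ together with any free generating set of $H$.

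For the quantitative half I would take $T$ to be a Schreier (prefix-closed) transversal, so that Nielsen--Schreier furnishes an explicit free generating set $S_H$ for $H$ with $|S_H|=\rnk(H)=d(r-1)+1$, where $r=|S|=\rnk(G)$. The numerical backbone is the Euler-characteristic identity $1-\rnk(H)=d(1-r)$, i.e.\ the multiplicativity of $1-\rnk$ under passage to an index-$d$ subgroup. To exploit it I would rewrite the defining expression in the conditional form
$$f(\delta)=(1-2\,\rnk)\sH(\delta)+\sum_{s}\sH(s\delta\vee\delta)=(1-\rnk)\sH(\delta)+\sum_{s}\sH(s\delta\mid\delta),$$
valid for either group using its own rank and generators (here measure-preservation gives $\sH(s\delta)=\sH(\delta)$). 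Then $\f_G(X,\mu)=\lim_n f(B_G(n)\cdot\alpha)$ and $\f_H(X,\mu)=\lim_n f(B_H(n)\cdot\beta)$, and since $\beta=T\cdot\alpha$ we have $B_H(n)\cdot\beta=(B_H(n)\,T)\cdot\alpha$, so both limits are assembled from Shannon entropies of partitions $W\cdot\alpha$ with $W\subseteq G$ finite.

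The heart of the argument is to compare these two limits and show the $H$-combination converges to $d$ times the $G$-combination. The leading ``volume'' terms should match because $B_H(n)T$ and a suitable $G$-ball differ only by a bounded-width collar, so $\sH\big((B_H(n)T)\cdot\alpha\big)$ is asymptotically $d\cdot\sH$ of a comparable $G$-ball up to lower order; combined with $1-\rnk(H)=d(1-r)$ this aligns the vertex contributions. The conditional ``boundary'' terms $\sum_{s\in S_H}\sH(s\beta\mid\beta)$ are treated through the Schreier description of $S_H$: each Schreier generator corresponds to an edge of the Cayley graph of $G$ joining two cosets, and counting these edges against the $d$ parallel copies of $G$-edges is what produces the factor $d$ while reconciling $r$ with $\rnk(H)$.

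I expect the main obstacle to be exactly this exhaustion mismatch. The natural $H$-exhaustion $B_H(n)\,T$ is not a $G$-ball, since the Schreier generators have bounded but nontrivial $S$-length, so $B_H(n)T$ is only comparable to $B_G(cn)$. Overcoming this requires showing that the f-invariant limit is robust under replacing balls by any sufficiently ``round'' exhausting sequence (boundary negligible relative to bulk in the appropriate tree sense), and that the discrepancy between $B_H(n)T$ and the matching $G$-ball contributes only lower-order error to both the volume and the boundary sums. Proving that these error terms vanish in the limit, uniformly enough to preserve the exact constant, is the delicate technical core; once that control is in place, the clean numerology $1-\rnk(H)=d(1-r)$ forces the constant to be precisely $d$.
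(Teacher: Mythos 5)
Your first paragraph (definedness transfers both ways, via $\beta = T\cdot\alpha$ and $\sH(\beta)\le |G:H|\,\sH(\alpha)$) is correct and is exactly the argument the paper uses; the Nielsen--Schreier numerology $1-\rnk(H)=|G:H|(1-r)$ is also the right skeleton. The gap is in the quantitative core. Your plan rests on the claim that $B_H(n)T$ and a comparable $G$-ball differ by a ``bounded-width collar'' contributing only lower-order error, i.e.\ on a F{\o}lner-type estimate. In a nonabelian free group no such estimate exists: the sphere of radius $n$ has cardinality comparable to the whole ball $B_S(n)$, so \emph{no} exhausting sequence has boundary negligible relative to bulk. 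Worse, even if your errors really were $o(|B_S(n)|)$, that would not suffice. The quantity $F_G(X,\mu,S,B_S(n)\cdot\alpha)$ is an alternating combination of terms each of size on the order of $|B_S(n)|\cdot\sH(\alpha)$, whose near-total mutual cancellation is what leaves a finite limit; to preserve that limit and its constant you need unaccounted errors tending to $0$, not merely of lower order than the individual terms. An error of size even $\log|B_S(n)|$ ruins the conclusion. Finally, the ``robustness under round exhaustions'' you invoke does hold, but only in a weak form: Bowen's monotonicity (Lemma \ref{LEM FDECR}) shows the limit of the \emph{same} functional $F_G$ is unchanged along any increasing exhaustion by left-$S$-connected sets. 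It says nothing about comparing the $H$-functional $F_H$ (built from rank $|G:H|(r-1)+1$ and the Schreier generators) evaluated on $(B_H(n)T)\cdot\alpha$ with $|G:H|$ times the $G$-functional; that comparison is the entire content of the theorem, and your sketch provides no mechanism for it beyond the asymptotic matching that cannot work.

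For contrast, the paper avoids asymptotics altogether at the crucial step. For Markov processes the f-invariant is given by a single exact formula with no limit (Theorem \ref{THM MARKFINV}), the restricted action of $H$ is again a Markov process with partition $\Delta\cdot\alpha$ over a right-$S$-connected transversal (Theorem \ref{THM SUBMARK}), and the factor $|G:H|$ then drops out of exact edge-counting identities (Lemmas \ref{LEM SHANEDGE}, \ref{LEM SUBEDGE}, \ref{LEM COMB}) --- first for normal subgroups using a bi-$S$-connected transversal (Lemma \ref{LEM BI}), then for general $H$ by passing to a normal finite-index subgroup (Corollary \ref{COR MARKRATIO}). The general case follows not from an error estimate but from two one-sided inequalities obtained via Markov approximations (Theorem \ref{THM MARKAPP}, Lemma \ref{LEM MARKAPP2}); notably the two directions need genuinely different arguments, since one cannot Markov-approximate the $H$-action (the approximating measure need not be $G$-invariant), and the harder direction rests on the separate entropy computation establishing Inequality \ref{INEQ}. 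In effect, the exact combinatorial cancellation carried out for Markov processes is the replacement for the ``negligible boundary'' your approach would need and cannot have.
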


We mention that the above theorem is a generalization of a well known property of Kolmogorov--Sinai entropy. Specifically, if $G$ is a countable amenable group, $H \leq G$ is a subgroup of finite index, and $G$ acts measure preservingly on a standard probability space $(X, \mu)$ then $h_H(X, \mu) = |G : H| \cdot h_G(X, \mu)$, where $h_H$ and $h_G$ denote the Kolmogorov--Sinai entropies of the $H$ and $G$ actions, respectively (see \cite[Theorem 2.16]{D01} for a stronger result).

We give an example to show that $\f_H(X, \mu)$ may not equal $|G : H| \cdot \f_G(X, \mu)$ if $H$ is not of finite index, even if both $\f_H(X, \mu)$ and $\f_G(X, \mu)$ are defined. This is in contrast with Kolmogorov--Sinai entropy where $h_H(X, \mu) = |G : H| \cdot h_G(X, \mu)$, regardless if $H$ has finite index or infinite index. If $H$ has infinite index in $G$ then we take this equation to mean that $h_G(X, \mu) = 0$ if $h_H(X, \mu)$ is finite, and $h_H(X, \mu) = \infty$ if $h_G(X, \mu)$ is non-zero. We apply a similar logic to the equation $\f_H(X, \mu) = |G : H| \cdot \f_G(X, \mu)$ when $|G : H| = \infty$. We do however obtain the following relationship.

\begin{cor}
Let $G$ be a finitely generated free group, let $H \leq G$ be a non-trivial subgroup of infinite index, let $G$ act on a standard probability space $(X, \mu)$ by measure preserving bijections, and let $H$ act on $(X, \mu)$ by restricting the action of $G$. Suppose there are infinitely many finite index subgroups of $G$ containing $H$. If $f_H(X, \mu)$ is defined, then $f_G(X, \mu)$ is defined and $f_G(X, \mu) \leq 0$.
\end{cor}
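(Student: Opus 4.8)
The plan is to reduce the statement to Theorem~\ref{THM INTROMAIN} by interposing the finite index subgroups between $H$ and $G$ and controlling the f-invariant entropies of these intermediate subgroups by a single fixed generating partition. First I would dispense with the question of definedness. Since $\f_H(X, \mu)$ is defined, there is a partition $\alpha$ of $X$ with $\sH(\alpha) < \infty$ that generates under the $H$-action. Because $H \leq G$, the smallest $G$-invariant $\sigma$-algebra containing $\alpha$ contains the smallest $H$-invariant one, which is already the entire measure algebra; hence $\alpha$ is also a generating partition for the $G$-action, and likewise for the action of every intermediate subgroup $H \leq K \leq G$. Consequently $\f_G(X, \mu)$ is defined, as is $\f_K(X, \mu)$ for every such $K$, and all of these invariants may be computed using the single partition $\alpha$.

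Next I would extract a sequence of finite index subgroups with unbounded index. By hypothesis there are infinitely many finite index subgroups $K$ with $H \leq K \leq G$. Since $G$ is finitely generated, it has only finitely many subgroups of any given finite index, so these indices cannot be bounded; thus I may choose subgroups $K_j$ with $H \leq K_j \leq G$ and $|G : K_j| \to \infty$. Applying Theorem~\ref{THM INTROMAIN} to each pair $K_j \leq G$ then gives $\f_{K_j}(X, \mu) = |G : K_j| \cdot \f_G(X, \mu)$.

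The crux is a uniform upper bound on $\f_{K_j}(X, \mu)$. Writing $F_K(\beta) = (1 - 2\, \rnk(K)) \sH(\beta) + \sum_{s} \sH(s\beta \vee \beta)$ for the defining functional of $K$, with the sum taken over a free generating set of $K$, the elementary estimate $\sH(s\beta \vee \beta) \leq 2\sH(\beta)$ yields $F_K(\beta) \leq \sH(\beta)$ for every partition $\beta$, independently of $\rnk(K)$. Combining this with the monotonicity of the sequence $n \mapsto F_K(B(n) \cdot \alpha)$ (to be recalled in Section~\ref{SEC FINV}), whose limit defines $\f_K(X, \mu, \alpha)$, I obtain $\f_K(X, \mu) \leq F_K(\alpha) \leq \sH(\alpha)$. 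Crucially this bound uses the same $\alpha$ for every $K_j$, so it is a single finite constant. Therefore $|G : K_j| \cdot \f_G(X, \mu) = \f_{K_j}(X, \mu) \leq \sH(\alpha)$, whence $\f_G(X, \mu) \leq \sH(\alpha) / |G : K_j|$ for all $j$; letting $j \to \infty$ forces $\f_G(X, \mu) \leq 0$.

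I expect the main obstacle to be justifying the a priori bound $\f_K(X, \mu) \leq \sH(\alpha)$ uniformly in $K$, that is, establishing that the defining sequence $F_K(B(n) \cdot \alpha)$ is non-increasing (so that its limit is at most its value $F_K(\alpha)$ at $n = 0$) and checking that the varying rank of $K_j$ does not spoil the estimate. Everything else is bookkeeping: the transfer of generatingness from $H$ to the intermediate subgroups, and the observation that infinitely many intermediate finite index subgroups force the indices to infinity.
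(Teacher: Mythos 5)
Your proposal is correct and follows essentially the same route as the paper: both arguments interpose finite index subgroups between $H$ and $G$ (using that a finitely generated group has only finitely many subgroups of each finite index, so the indices are unbounded), bound the f-invariant entropy of each intermediate subgroup by $\sH(\alpha)$ via the monotonicity of the defining sequence, and then apply Theorem \ref{THM INTROMAIN} and let the index tend to infinity. The only cosmetic difference is that the paper phrases the limiting step with a fixed $\epsilon$ and a single subgroup of index exceeding $\sH(\alpha)/\epsilon$, whereas you extract a sequence of subgroups with index tending to infinity.
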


The proof of the main theorem relies primarily on a study of Markov processes over free groups. In fact we first obtain Theorem \ref{THM INTROMAIN} for Markov processes and normal subgroups. We then use various arguments to extend the result to general subgroups and general actions. The definition of Markov processes is somewhat technical, so we postpone it until Section \ref{SEC MARKOV}.

The following result on Markov processes is a key ingredient for our arguments and also seems to be of general interest. 

\begin{thm}
Let $G$ be a free group, and let $H \leq G$ be a subgroup of finite index. Let $G$ act on a standard probability space $(X, \mu)$ by measure preserving bijections, and let $H$ act on $(X, \mu)$ by restricting the action of $G$. If $G \acts (X, \mu)$ is measurably conjugate to a Markov process then $H \acts (X, \mu)$ is measurably conjugate to a Markov process as well.
\end{thm}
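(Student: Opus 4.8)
The plan is to build an explicit generating partition for the $H$-action that inherits the Markov structure of the $G$-action from the geometry of the Cayley tree. First I would invoke the Nielsen--Schreier theorem to fix a Schreier transversal $T$ for $H$ in $G$, i.e.\ a set of right-coset representatives containing $1_G$ whose elements form a connected subtree of the Cayley tree $\Cay(G,S)$. This $T$ is finite of size $|G:H|$ and yields a free generating set $S_H$ for $H$ consisting of the nontrivial Schreier generators. Geometrically, the translates $\{hT : h \in H\}$ tile the vertex set of $\Cay(G,S)$ by connected copies of $T$ (a fundamental domain for the $H$-action by left multiplication), and since $\Cay(G,S)$ is a tree any two adjacent tiles are joined by a single ``bridge'' edge. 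Contracting each tile to a point recovers $\Cay(H,S_H)$; in particular the bridge edges are in bijection with the edges of the Cayley tree of $H$.

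Assuming $G \acts (X,\mu)$ is conjugate to a Markov process, witnessed by a finite Shannon entropy generating partition $\alpha$, I would then set $\beta = \bigvee_{t\in T} t\cdot\alpha$. Since every $g\in G$ factors uniquely as $g = ht$ with $h\in H$ and $t\in T$, we have $\bigvee_{h\in H} h\cdot\beta = \bigvee_{g\in G} g\cdot\alpha$, so $\beta$ is a finite Shannon entropy generating partition for the restricted action. Under the associated $\alpha$-coloring of $\Cay(G,S)$, the $\beta$-value at a vertex $h\in H$ records precisely the $\alpha$-colors on the tile $hT$. Thus the $H$-indexed process $(X,\mu,\beta)$ is the image of the $G$-process under the tile-contraction map, and it remains to check that this coarser process is Markov with respect to $\Cay(H,S_H)$.

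For this I would use the conditional independence form of the Markov property. Fix an edge $\{h, hs_H\}$ of $\Cay(H,S_H)$; removing it splits $H$ into halves $H_1\ni h$ and $H_2\ni hs_H$, and correspondingly splits the vertices of $\Cay(G,S)$ into $U_1 = \bigcup_{h'\in H_1} h'T$ and $U_2 = \bigcup_{h'\in H_2} h'T$, separated by a single bridge edge $e = \{a,b\}$ with $a\in hT$ and $b\in (hs_H)T$. Writing $W = hT\cup(hs_H)T$ for the data recorded by $\beta$ at $h$ and $hs_H$, I would observe that $\{a,b\}\subseteq W$, that $W\cap U_1 = hT$ and $W\cap U_2 = (hs_H)T$, and that $W$ separates $U_1\setminus hT$ from $U_2\setminus(hs_H)T$, since every path crossing from $U_1$ to $U_2$ must traverse $e$. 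The global (separating-set) Markov property of the $G$-process then gives that the $\alpha$-colors on $U_1\setminus hT$ and on $U_2\setminus(hs_H)T$ are conditionally independent given the $\alpha$-colors on $W$; because the colors on $hT$ and $(hs_H)T$ are themselves part of the conditioning $W$, this upgrades to conditional independence of the colors on all of $U_1$ and all of $U_2$ given $W$. Translated back through the tile-contraction, this is exactly the statement that the $\beta$-values on $H_1$ and $H_2$ are conditionally independent given the $\beta$-values at $h$ and $hs_H$, i.e.\ the Markov property for $H$ across the edge $\{h, hs_H\}$. The restriction to $H$ of the $G$-shift-invariance supplies the stationarity needed for $(X,\mu,\beta)$ to be a genuine Markov process.

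The main obstacle I anticipate is this transfer step. The $G$-Markov property is naturally phrased in terms of single edges of $\Cay(G,S)$ and their two endpoints, whereas the $H$-process conditions on the far larger $\beta$-values encoding entire fundamental domains. Making the match precise requires the \emph{global} Markov property for the $G$-tree rather than merely the single-edge version, so I would first record a lemma deriving the global form from the paper's definition, and then carefully track which sites lie in the conditioning versus the two sides (the decomposition $U_1 = (U_1\setminus hT)\sqcup hT$ above). A secondary point is to verify that the coarse-grained process genuinely matches the formal definition of a Markov process, including any homogeneity of its transition data; this should follow by computing the $H$-transition kernels as marginals of the $G$-process over the internal structure of the tiles, but this bookkeeping is where the technical care is concentrated.
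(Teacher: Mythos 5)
Your overall route is the paper's own: your Schreier transversal and Schreier generators are exactly the right $S$-connected transversal $\Delta$ and the free generating set $T$ constructed in the proof of Theorem \ref{THM SUBMARK}; your partition $\beta = \bigvee_{t \in T} t \cdot \alpha$ is the paper's $\Delta \cdot \alpha$; your tile/bridge-edge correspondence is the paper's equivalence between right $T$-separation in $H$ and right $S$-separation of the dilated sets in $G$; and the ``global Markov'' lemma you propose to record first is precisely Lemma \ref{LEM INTUIT}, which the paper proves (via conditional Shannon entropies, which is why your finite Shannon entropy assumption on $\alpha$ is also the paper's hypothesis) before applying it generator by generator.

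There is, however, one genuine defect in your final translation step. What you derive is that the colors on $U_1$ and on $U_2$ are conditionally independent given the colors on \emph{both} endpoint tiles $W = hT \cup (hs_H)T$, and you assert that this is ``exactly\dots the Markov property for $H$ across the edge.'' Under the paper's definition it is not: a $(T, \beta)$-Markov process requires $\cex(\chi_{t \cdot B} / \RP_T(1_H, t) \cdot \beta) = \cex(\chi_{t \cdot B} / \beta)$ for $B \in \beta$, i.e.\ the tile at $t$ must be conditionally independent of the entire $1_H$-side given the \emph{single} tile at $1_H$. Independence of the two sides given both endpoints is the strictly weaker ``reciprocal'' property: already for $H = \Z$, any stationary two-step Markov chain that is not one-step Markov satisfies your condition across every edge (given $(X_n, X_{n+1})$, past and future are generated by independent fresh randomness), yet is not a Markov process. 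So the statement you prove does not, by itself, yield the conclusion. The repair lives entirely inside your own setup: the single tile $hT$ already separates $U_2$ from $U_1 \setminus hT$, because the unique bridge edge has an endpoint in $hT$. Taking $h = 1_H$ and applying the global lemma with $U = t\Delta$, $V = \Delta$, and $W = \RP_T(1_H, t)\Delta$ gives $\sH(t\Delta \cdot \alpha / \RP_T(1_H, t)\Delta \cdot \alpha) = \sH(t\Delta \cdot \alpha / \Delta \cdot \alpha)$, which by Lemma \ref{LEM DEFN} (using $\sH(\beta) \leq |\Delta| \cdot \sH(\alpha) < \infty$) is verbatim the Markov condition for $(T, \beta)$; this one-endpoint conditioning is exactly how the paper concludes, and it also disposes of your ``secondary point,'' since homogeneity of the transition data is automatic from $H$-invariance of $\mu$ once the condition is checked at $1_H$ for each $t \in T \cup T^{-1}$.
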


We show that in many circumstances the property of being a Markov process is independent of the choice of a free generating set for $G$.

\begin{cor}
Let $G$ be a finitely generated free group acting measure preservingly on a standard probability space $(X, \mu)$. Let $S_1$ and $S_2$ be two free generating sets for $G$. Suppose that $G \acts (X, \mu)$ is measurably conjugate to a $S_1$-Markov process with finite Shannon entropy Markov partition. Then $G \acts (X, \mu)$ is measurably conjugate to a $S_2$-Markov process as well.
\end{cor}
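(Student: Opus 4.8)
The plan is to reduce the problem to a single elementary change of generators and then to produce an explicit measure conjugacy by a bounded block recoding. Fix the automorphism $\phi \in \Aut(G)$ determined by $\phi(S_1) = S_2$. Since $G$ has finite rank, $\Aut(G)$ is generated by the elementary Nielsen transformations, so $\phi$ is a composition $\phi = \phi_k \circ \cdots \circ \phi_1$ of such moves, giving a chain of intermediate free generating sets $S_1 = R_0, R_1, \ldots, R_k = S_2$ in which consecutive sets differ by one Nielsen move. As ``measurably conjugate to a Markov process'' is a property of the action that I will show is transferred across one such step, and measurable conjugacy is transitive, it suffices to treat the case in which $S_2$ is obtained from $S_1$ by a single elementary Nielsen transformation.

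When the transformation merely permutes two generators or inverts one generator, the trees $\Cay(G, S_1)$ and $\Cay(G, S_2)$ coincide as unoriented, unlabelled graphs: only the names and orientations of the edges are affected. Hence any realization of $G \acts (X,\mu)$ that is $S_1$-Markov is already $S_2$-Markov after relabelling the alphabet, and no recoding is needed. The entire content therefore lies in the transvection case, say $S_2 = (S_1 \setminus \{s\}) \cup \{st\}$ with $s \neq t$ in $S_1$.

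For the transvection I would argue as follows. Using the hypothesis, realize $G \acts (X,\mu)$ as an $S_1$-Markov process on $A^G$ with finite Shannon entropy Markov partition $\alpha$. I would then define a new partition $\beta$ that, at each group element $g$, records the $\alpha$-values of the process on a uniformly bounded $S_1$-neighborhood of $g$ large enough to express the new generator $st$ and its inverse (in particular the values along the length-two path $g, gs, gst$). Since $\beta$ refines $\alpha$ by a uniformly bounded amount, $\beta$ is still generating and has finite Shannon entropy, so the induced coding map is a genuine measure conjugacy onto a process on a larger alphabet $B^G$. It then remains to verify that this recoded process satisfies the $S_2$-Markov conditional independence: given the value at a vertex $g$, the restrictions of the process to the subtrees of $\Cay(G, S_2)$ hanging off $g$ (one for each element of $S_2 \cup S_2^{-1}$) should be independent, and each a stationary Markov chain.

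Verifying this conditional independence is the step I expect to be the main obstacle. The difficulty is that an $S_2$-edge $g \to gst$ is a path of length two in the $S_1$-tree whose intermediate vertex $gs$ is shared with the $s$- and $t$-directions, so the $S_2$-splitting at $g$ is not visibly one of the $S_1$-splittings. The plan is to recover it by applying the $S_1$-tree Markov property simultaneously at $g$ and at the intermediate vertices, and to use the extra coordinates carried by $\beta$ to screen off precisely the shared information between adjacent $S_2$-subtrees. I expect the argument to close, but identifying exactly which coordinates of $\beta$ screen off which subtree and checking that the resulting transition data is stationary is delicate; this is where the finite Shannon entropy hypothesis and the recoding technique underlying the restriction theorem do the essential work.
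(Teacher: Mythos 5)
Your reduction to elementary Nielsen transformations is legitimate (any two free generating sets of $G$ are joined by a chain of such moves, and permutations and inversions of generators change nothing since the Markov condition only involves $S \cup S^{-1}$), but the proof has a genuine gap exactly where you flag it: the $S_2$-Markov property of the recoded process in the transvection case is never verified, and that verification is the entire content of the corollary. Two concrete reasons this is not a routine check. First, the shape of the coding window is delicate: the remark following Lemma \ref{LEM BIGPART} shows that for a Bernoulli shift with canonical partition $\alpha$, the partition $\Delta \cdot \alpha$ \emph{fails} to be Markov whenever $\Delta$ is not left $S$-connected, so ``any bounded neighborhood large enough to express $st$'' will not do. Your window, specified only as a bounded $S_1$-neighborhood containing the path $g, gs, gst$, is not pinned down by the property that actually matters: in the paper's proof the window $W$ must be left $S_2$-connected --- connected with respect to the \emph{new} generators --- and contain $S_1 \cup \{1_G\}$. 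Second, the screening-off you hope for has no visible mechanism: for $u \in S_2 \cup S_2^{-1}$ the past $\RP_{S_2}(1_G, u)$ is a half-tree in the $S_2$-Cayley tree but is geometrically wild when viewed in the $S_1$-tree, and no bounded thickening of a window obviously right-$S_1$-separates it from $u W$, which is what you would need in order to invoke the separation machinery (Lemma \ref{LEM INTUIT}) for the ambient $(S_1,\alpha)$-Markov process. Conditioning on extra block coordinates can destroy conditional independence as easily as create it, and no argument is offered; ``I expect the argument to close'' is precisely the missing proof.

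The paper circumvents this difficulty entirely and needs no Nielsen reduction: it never checks conditional independence directly, but instead uses Bowen's entropy characterization of Markov processes (Theorem \ref{THM MARKFINV}: a finite Shannon entropy generating partition $\alpha$ is Markov if and only if $f_G(X,\mu) = F_G(X, \mu, S, \alpha)$) together with clause (i) of Corollary \ref{COR MAINFIN} applied with $H = G$, $T = S_1$, and $V = \{1_G\}$. For any finite left $S_2$-connected $W \supseteq S_1 \cup \{1_G\}$ this gives the squeeze $f_G(X,\mu) = F_G(X, \mu, S_1, \alpha_1) \geq F_G(X, \mu, S_2, W \cdot \alpha_1) \geq f_G(X,\mu)$, where the last inequality holds because $W \cdot \alpha_1$ is generating; equality throughout then forces $W \cdot \alpha_1$ to be an $S_2$-Markov partition by Theorem \ref{THM MARKFINV}. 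The heavy lifting hides in the Markov approximation result (Lemma \ref{LEM MARKAPP2}) behind Corollary \ref{COR MAINFIN}, and the finite Shannon entropy hypothesis enters through the entropy calculus, not through any coding estimate. Your instinct that a bounded block recoding over a suitable window produces the new Markov partition is correct --- the paper's partition is exactly such a block code --- but to salvage your route you should either replace the direct independence check by this entropy argument, or supply the missing separation statement, which for a single transvection already appears as hard as the general case.
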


Our main theorem also leads to the following interesting inequality involving f-invariant entropy. Relevant definitions can be found in the next section.

\begin{cor}
Let $G$ be a finitely generated free group acting on a standard probability space $(X, \mu)$ by measure preserving bijections. Suppose that this action admits a generating partition $\alpha$ having finite Shannon entropy. Then for any free generating set $S$ for $G$ and any finite right $S$-connected set $\Delta \subseteq G$ we have
$$f_G(X, \mu) \leq \frac{\sH(\Delta \cdot \alpha)}{|\Delta|} \leq \sH(\alpha).$$
\end{cor}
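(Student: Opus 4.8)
The plan is to establish the two inequalities separately, exploiting the main theorem (Theorem~\ref{THM INTROMAIN}) together with the observation that the definition of f-invariant entropy, when unwound, presents $f_G(X,\mu)$ as a limit of normalized Shannon entropy differences. The overall strategy is to realize the ratio $\sH(\Delta\cdot\alpha)/|\Delta|$ as the f-invariant entropy (or a close cousin thereof) of an associated action, then sandwich using monotonicity and subadditivity of Shannon entropy.

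Let me discuss me the right-hand inequality first, since it is the easier of the two. The claim $\sH(\Delta\cdot\alpha)/|\Delta| \leq \sH(\alpha)$ follows immediately from subadditivity of Shannon entropy: since $\Delta\cdot\alpha = \bigvee_{f\in\Delta} f\cdot\alpha$, we have $\sH(\Delta\cdot\alpha)\leq\sum_{f\in\Delta}\sH(f\cdot\alpha) = |\Delta|\cdot\sH(\alpha)$, where the last equality uses that the $G$-action is measure preserving, so $\sH(f\cdot\alpha)=\sH(\alpha)$ for every $f$. Dividing by $|\Delta|$ gives the bound, and this step requires no connectedness hypothesis on $\Delta$.

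The left-hand inequality $f_G(X,\mu)\leq \sH(\Delta\cdot\alpha)/|\Delta|$ is where the real content lies, and this is where I expect the main obstacle. The idea is to interpret the quantity $\sH(\Delta\cdot\alpha)$, for a right $S$-connected set $\Delta$, in terms of a subgroup or telescoping argument. My first approach would be to rewrite the defining limit for $f_G$ using the edge/vertex formula: writing $F_j$ for an increasing sequence of finite right $S$-connected sets exhausting $G$, one expects a formula of the shape $f_G(X,\mu)=\lim_j\left[\sH(F_j\cdot\alpha)-\sum_{\text{edges }(g,gs)\text{ in }F_j}\big(\sH(gs\cdot\alpha\mid (F_j\setminus\{gs\})\cdot\alpha)\text{-type terms}\big)\right]$, reflecting that $(1-2r)$ counts vertices against $2r$ half-edges per generator. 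The key monotonicity input is that conditioning decreases entropy, so each incremental term added when enlarging $\Delta$ by one vertex along an $S$-edge contributes at most the unconditioned single-vertex entropy, which telescopes. Concretely, I would try to show that the sequence $\sH(\Delta\cdot\alpha)/|\Delta|$, taken over nested right $S$-connected sets, is nonincreasing in a suitable averaged sense and converges downward to $f_G$; the $S$-connectedness is exactly what guarantees each added generator edge lands inside the set so the telescoping is valid and no boundary overcounting occurs.

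The hard part will be controlling the combinatorics of an arbitrary finite right $S$-connected $\Delta$ (not just a ball $B(n)$) and relating it cleanly to the canonical limit that defines $f_G$. One clean route that avoids this difficulty is to pass through a subgroup: if $\Delta$ can be chosen as a transversal-type or fundamental-domain-type set for a finite index subgroup $H\leq G$, then Theorem~\ref{THM INTROMAIN} gives $f_G(X,\mu)=\f_H(X,\mu)/|G:H|$, and one estimates $\f_H(X,\mu)$ from above by the Shannon entropy of the partition $\Delta\cdot\alpha$ viewed as a generating partition for the $H$-action, using the general bound that f-invariant entropy never exceeds the Shannon entropy of any finite Shannon entropy generating partition. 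Since $|G:H|=|\Delta|$ for an appropriate $\Delta$, this yields exactly $f_G(X,\mu)\leq\sH(\Delta\cdot\alpha)/|\Delta|$. I would therefore first verify that $\Delta\cdot\alpha$ generates for the relevant subgroup action and that $|\Delta|$ matches the index, then invoke the subgroup formula; extending from such special $\Delta$ to all right $S$-connected $\Delta$ via the monotone telescoping above completes the argument.
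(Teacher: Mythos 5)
Your right-hand inequality is correct and is exactly the paper's (subadditivity plus measure preservation; no connectedness is needed there). Your core mechanism for the left-hand inequality is also the paper's: find a finite index subgroup $H \leq G$ for which $\Delta$ is a transversal of the right $H$-cosets, note that $\Delta \cdot \alpha$ is then a finite Shannon entropy generating partition for $H \acts (X, \mu)$ (because $H \Delta = G$), bound $\f_H(X, \mu) \leq F_H(X, \mu, T, \Delta \cdot \alpha) \leq \sH(\Delta \cdot \alpha)$ using Lemma \ref{LEM FDECR}, and divide by $|G : H| = |\Delta|$ via Theorem \ref{THM MAIN}. The genuine gap is that you carry this out only when ``$\Delta$ can be chosen as a transversal-type or fundamental-domain-type set,'' deferring all other right $S$-connected $\Delta$ to an unproven telescoping argument. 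The missing idea is that there is no other case: after translating so that $1_G \in \Delta$ (which changes neither $\sH(\Delta \cdot \alpha)$ nor right $S$-connectedness), \emph{every} finite right $S$-connected set $\Delta$ is a right $S$-connected transversal of the right cosets of a finite index subgroup of index exactly $|\Delta|$. The paper proves this by defining a right action $*$ of $G$ on the finite set $\Delta$: for $s \in S$, set $\delta * s = \delta s$ if $\delta s \in \Delta$, and otherwise $\delta * s = \delta s^{-k}$ with $k \geq 0$ maximal such that $\delta s^{-k} \in \Delta$. Each generator then cyclically shifts the maximal $s$-strings of $\Delta$, right $S$-connectedness makes the action transitive and forces $1_G * \delta = \delta$ for all $\delta \in \Delta$, and one takes $H = \{g \in G \: 1_G * g = 1_G\}$. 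With this construction your ``special case'' is the general case, and the proof closes with no limiting step.

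This matters because both claims behind your fallback are false. Monotonicity fails: $\sH(\Delta \cdot \alpha)/|\Delta|$ is not non-increasing along nested right $S$-connected sets. For example, let $G = \langle a, b \rangle$, let $a$ act by a transformation $T$ with $c = \sH(a \cdot \alpha / \alpha) > 0$, let $b$ act trivially, and let $\alpha$ be a finite entropy generating partition for $T$. For $\Delta_n = \{1_G, b, \ldots, b^{n-1}\}$ one has $\sH(\Delta_n \cdot \alpha)/|\Delta_n| = \sH(\alpha)/n$, while adjoining the single element $a$ yields $(\sH(\alpha) + c)/(n+1)$, which is strictly larger as soon as $n c > \sH(\alpha)$. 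Convergence fails as well: along an exhaustion of $G$ by right $S$-connected sets, $\sH(\Delta \cdot \alpha)/|\Delta|$ need not converge to $f_G(X, \mu)$ --- for the trivial action of a rank $r \geq 2$ free group on $n \geq 2$ uniform points it converges to $0$, whereas $f_G(X, \mu) = (1 - r) \log(n) < 0$. So the desired inequality can be strict in the limit, and no monotone or limiting argument can transfer it from a special family of sets to all right $S$-connected sets; you really do need every such $\Delta$ to be a transversal, which is precisely what the paper's construction supplies.
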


The rest of our corollaries deal with virtually free groups and the virtual measure conjugacy relation. Recall that a group $\Gamma$ is \emph{virtually free} if it contains a free subgroup of finite index. Similarly, a group is \emph{virtually $\Z$} if it contains $\Z$ as a finite index subgroup.

\begin{cor}
Let $\Gamma$ be a finitely generated virtually free group acting measure preservingly on a standard probability space $(X, \mu)$. Let $G, H \leq \Gamma$ be finite index free subgroups, and let them act on $(X, \mu)$ by restricting the $\Gamma$ action. Assume that there is a finite Shannon entropy generating partition for $\Gamma \acts (X, \mu)$. Then $f_G(X, \mu)$ and $f_H(X, \mu)$ are defined and
$$\frac{1}{|\Gamma : G|} \cdot f_G(X, \mu) = \frac{1}{|\Gamma : H|} \cdot f_H(X, \mu).$$
Furthermore, if $\Gamma$ is itself free then the above common value is $f_\Gamma(X, \mu)$.
\end{cor}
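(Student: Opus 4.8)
The plan is to deduce this corollary from the main theorem (Theorem~\ref{THM INTROMAIN}) by passing to a common finite-index subgroup of $G$ and $H$. Because $G$ and $H$ need not be comparable, Theorem~\ref{THM INTROMAIN} cannot be applied to them directly; instead I would introduce $K = G \cap H$ and relate each of $\f_G$ and $\f_H$ to $\f_K$.

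First I would record the group-theoretic facts. As an intersection of two finite-index subgroups, $K = G \cap H$ has finite index in $\Gamma$, and multiplicativity of the index gives $|\Gamma : K| = |\Gamma : G| \cdot |G : K| = |\Gamma : H| \cdot |H : K|$; in particular $K$ has finite index in each of $G$ and $H$. Since $K \leq G$ and $G$ is free, the Nielsen--Schreier theorem shows $K$ is free, and being a finite-index subgroup of the finitely generated group $\Gamma$ it is finitely generated. Hence $K$ is a finitely generated free group to which the definition of f-invariant entropy applies.

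Next I would establish that all three entropies are defined. Let $\alpha$ be a finite Shannon entropy generating partition for $\Gamma \acts (X,\mu)$. For any finite-index subgroup $L \leq \Gamma$, I would choose a right transversal $T$ with $\Gamma = \bigsqcup_{t \in T} L t$ and set $\beta = \bigvee_{t \in T} t \cdot \alpha$. The smallest $L$-invariant $\sigma$-algebra containing $\beta$ contains $\ell \cdot (t \cdot \alpha) = (\ell t) \cdot \alpha$ for every $\ell \in L$ and $t \in T$; since every element of $\Gamma$ is uniquely of the form $\ell t$, this $\sigma$-algebra contains $\gamma \cdot \alpha$ for all $\gamma \in \Gamma$ and so equals the whole measure algebra. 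Thus $\beta$ generates for $L$, while $\sH(\beta) \leq \sum_{t \in T} \sH(t \cdot \alpha) = |\Gamma : L| \cdot \sH(\alpha) < \infty$. Taking $L = G, H, K$ shows that $\f_G$, $\f_H$, and $\f_K$ are all defined.

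Finally I would combine everything. Applying Theorem~\ref{THM INTROMAIN} to $K \leq G$ and to $K \leq H$ yields
$$\f_K(X,\mu) = |G : K| \cdot \f_G(X,\mu) = |H : K| \cdot \f_H(X,\mu).$$
Substituting $|G : K| = |\Gamma : K| / |\Gamma : G|$ and $|H : K| = |\Gamma : K| / |\Gamma : H|$ and cancelling $|\Gamma : K|$ gives the desired identity $\frac{1}{|\Gamma : G|} \f_G(X,\mu) = \frac{1}{|\Gamma : H|} \f_H(X,\mu)$. For the last assertion, if $\Gamma$ is itself free then $\Gamma$ is an index-one free subgroup of itself with $\f_\Gamma$ defined, so taking $G = \Gamma$ in the identity just proved shows the common value equals $\f_\Gamma(X,\mu)$. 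I expect the one point needing genuine care to be the definedness step — confirming that a finite Shannon entropy generating partition for $\Gamma$ descends to each finite-index subgroup — since the algebraic conclusion is then a formal consequence of Theorem~\ref{THM INTROMAIN} together with multiplicativity of the index.
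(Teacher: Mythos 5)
Your proposal is correct and takes essentially the same route as the paper's own proof: the paper likewise sets $K = G \cap H$, notes that a finite Shannon entropy generating partition for $\Gamma$ descends to any finite-index subgroup via the join over a transversal (the argument from the first paragraph of the proof of Theorem \ref{THM MAIN}), applies Theorem \ref{THM MAIN} to $K \leq G$ and $K \leq H$, and cancels indices. The final assertion for free $\Gamma$ is also handled identically, by taking one of the two subgroups to be $\Gamma$ itself.
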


This corollary allows us to extend the definition of f-invariant entropy to actions of finitely generated virtually free groups.

\begin{defn}
Let $\Gamma$ be a finitely generated virtually free group acting measure preservingly on a standard probability space $(X, \mu)$. If there is a generating partition for this action having finite Shannon entropy, then we define the \emph{f-invariant entropy of $\Gamma \acts (X, \mu)$} to be
$$\f_\Gamma(X, \mu) = \frac{1}{|\Gamma: G|} \cdot \f_G(X, \mu),$$
where $G \leq \Gamma$ is any free subgroup of finite index, and $G$ acts on $X$ be restricting the action of $\Gamma$. If there is no generating partition for $\Gamma \acts (X, \mu)$ having finite Shannon entropy, then the f-invariant entropy of this action is undefined.
\end{defn}

The quantity $f_\Gamma(X, \mu)$ is a measure conjugacy invariant, and by the previous corollary this value does not depend on the choice of free subgroup of finite index $G$.

Next we consider virtual measure conjugacy among actions of finitely generated virtually free groups. Recall that two measure preserving actions $G \acts (X, \mu)$ and $H \acts (Y, \nu)$ on standard probability spaces are \emph{virtually measurably conjugate} if there are subgroups of finite index $G' \leq G$ and $H' \leq H$ such that the restricted actions $G' \acts (X, \mu)$ and $H' \acts (Y, \nu)$ are measurably conjugate, meaning that there is a group isomorphism $\psi: G' \rightarrow H'$ and a measure space isomorphism $\phi: X \rightarrow Y$ such that $\phi(g' \cdot x) = \psi(g') \cdot \phi(x)$ for every $g' \in G'$ and $\mu$-almost every $x \in X$.

\begin{cor}
For $i = 1, 2$, let $\Gamma_i$ be a finitely generated virtually free group which is not virtually $\Z$, and let $\Gamma_i$ act measure preservingly on a standard probability space $(X_i, \mu_i)$. Let $G_i \leq \Gamma_i$ be a free subgroup of finite index, and let $G_i$ act on $(X_i, \mu_i)$ by restricting the $\Gamma_i$ action. Assume that for each $i$ there is a finite Shannon entropy generating partition for $\Gamma_i \acts (X_i, \mu_i)$. If $\Gamma_1 \acts (X_1, \mu_1)$ is virtually measurably conjugate to $\Gamma_2 \acts (X_2, \mu_2)$ then
$$\frac{1}{r(G_1) - 1} \cdot f_{G_1}(X_1, \mu_1) = \frac{1}{r(G_2) - 1} \cdot f_{G_2}(X_2, \mu_2),$$
where $r(G_i)$ is the rank of $G_i$.
\end{cor}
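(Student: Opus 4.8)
The plan is to isolate the quantity $Q(\Gamma \acts (X,\mu)) := \frac{1}{r(G)-1}\, f_G(X,\mu)$, where $G \leq \Gamma$ is any free subgroup of finite index, and to establish three properties: that $Q$ is well defined (independent of $G$), that it is unchanged when $\Gamma$ is replaced by a finite-index subgroup acting by restriction, and that it is preserved by conjugacy through a group isomorphism. Granting these, the corollary is immediate: virtual measure conjugacy hands us finite-index subgroups $\Gamma_i' \leq \Gamma_i$ together with a conjugacy of their actions through an isomorphism, so $Q_1 = Q(\Gamma_1 \acts X_1) = Q(\Gamma_1' \acts X_1) = Q(\Gamma_2' \acts X_2) = Q(\Gamma_2 \acts X_2) = Q_2$.

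The arithmetic heart is a single cancellation. If $G$ is free of finite rank and $K \leq G$ has finite index, the Nielsen--Schreier index formula gives $r(K)-1 = |G:K|\,(r(G)-1)$, while Theorem~\ref{THM INTROMAIN} gives $f_K(X,\mu) = |G:K|\, f_G(X,\mu)$. Dividing, the factor $|G:K|$ cancels and $\frac{f_K}{r(K)-1} = \frac{f_G}{r(G)-1}$. The hypothesis that $\Gamma_i$ is not virtually $\Z$ guarantees its free finite-index subgroups have rank $\geq 2$, so $r(G)-1 \neq 0$ and every ratio is legitimate; and by the earlier virtually-free corollary each $f_G$ is defined, since $\Gamma_i \acts (X_i,\mu_i)$ admits a finite Shannon entropy generating partition. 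Well-definedness of $Q$ now follows by a standard argument: for two free finite-index subgroups $G, G' \leq \Gamma$ the intersection $K = G \cap G'$ is again free of finite index in both, and two applications of the cancellation give $\frac{f_G}{r(G)-1} = \frac{f_K}{r(K)-1} = \frac{f_{G'}}{r(G')-1}$. Equivalently, $Q = f_\Gamma(X,\mu)/(-\chi(\Gamma))$ with $\chi$ the virtual Euler characteristic, which makes its intrinsic nature transparent.

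Restriction-invariance uses the same device. If $\Gamma' \leq \Gamma$ is finite index and $G \leq \Gamma$ is free of finite index, then $G \cap \Gamma'$ is free and of finite index in both $\Gamma$ and $\Gamma'$; using it as the witnessing free subgroup on both sides and invoking well-definedness yields $Q(\Gamma \acts X) = Q(\Gamma' \acts X)$. For the conjugacy property I would unwind the definition of virtual measure conjugacy to produce $\Gamma_1' \leq \Gamma_1$, $\Gamma_2' \leq \Gamma_2$ of finite index, an isomorphism $\psi : \Gamma_1' \to \Gamma_2'$, and a measure isomorphism $\phi$ with $\phi(\gamma x) = \psi(\gamma)\phi(x)$. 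Choosing a free finite-index $G_1' \leq \Gamma_1'$, the image $\psi(G_1')$ is free and of finite index in $\Gamma_2'$, isomorphic to $G_1'$ (hence of the same rank), and $\phi$ conjugates the restricted free-group actions through $\psi$. Because f-invariant entropy is a measure conjugacy invariant independent of the free generating set, such an isomorphism preserves it, so $f_{G_1'}(X_1) = f_{\psi(G_1')}(X_2)$ and therefore $Q(\Gamma_1' \acts X_1) = Q(\Gamma_2' \acts X_2)$.

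The step I expect to carry all the weight is the cancellation in the second paragraph: the whole corollary rests on the fact that the multiplicative scaling $f_K = |G:K|\,f_G$ supplied by Theorem~\ref{THM INTROMAIN} exactly matches the scaling $r(K)-1 = |G:K|\,(r(G)-1)$ of the rank under passage to finite-index subgroups. Once this coincidence is in hand the remaining work is bookkeeping with intersections of finite-index subgroups (using that subgroups of free groups are free and that finite index is preserved under intersection) and the routine verification that all f-invariant entropies in sight are defined.
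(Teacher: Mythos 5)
Your proposal is correct and is essentially the paper's own argument: the identical arithmetic heart (cancelling $|G:K|$ between the subgroup formula of Theorem \ref{THM MAIN} and the Nielsen--Schreier rank formula of Lemma \ref{LEM RANK}), the same device of intersecting finite-index subgroups, and the same transport of a free finite-index subgroup through the isomorphism $\psi$ with equality of f-invariant entropy across the conjugacy. The paper simply runs these steps as one chain of equalities through $K_2 = \psi(K_1)$ and $N_i = G_i \cap K_i$ rather than packaging them as three properties of the quantity $Q$.
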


This corollary allows us to define a numerical invariant for virtual measure conjugacy among actions of finitely generated virtually free groups which are not virtually $\Z$.

\begin{defn}
Let $\Gamma$ be a finitely generated virtually free group which is not virtually $\Z$, and let $\Gamma$ act measure preservingly on a standard probability space $(X, \mu)$. If there is a generating partition having finite Shannon entropy, then the \emph{virtual f-invariant entropy of $\Gamma \acts (X, \mu)$} is defined as
$$\vf_\Gamma(X, \mu) = \frac{1}{r(G) - 1} \cdot f_G(X, \mu),$$
where $G$ is any free subgroup of finite index, $r(G)$ is the rank of $G$, and $G$ acts on $(X, \mu)$ by restricting the $\Gamma$ action. If there is no generating partition with finite Shannon entropy, then the virtual f-invariant entropy of this action is undefined.
\end{defn}

The previous corollary shows that the quantity $\vf_\Gamma(X, \mu)$ is a virtual measure conjugacy invariant and does not depend on the choice of free subgroup of finite index $G$.

We furthermore show that virtual f-invariant entropy is a complete virtual measure conjugacy invariant for those Bernoulli shifts on which it is defined.

\begin{prop}
For $i = 1, 2$, let $(K_i^{\Gamma_i}, \mu_i^{\Gamma_i})$ be a Bernoulli shift over a finitely generated virtually free group $\Gamma_i$ with $\Gamma_i$ not virtually $\Z$. If the virtual f-invariant entropy $\vf_{\Gamma_i}(K_i^{\Gamma_i}, \mu_i^{\Gamma_i})$ is defined for each $i$, then $(K_1^{\Gamma_1}, \mu_1^{\Gamma_1})$ is virtually measurably conjugate to $(K_2^{\Gamma_2}, \mu_2^{\Gamma_2})$ if and only if $\vf_{\Gamma_1}(K_1^{\Gamma_1}, \mu_1^{\Gamma_1}) = \vf_{\Gamma_2}(K_2^{\Gamma_2}, \mu_2^{\Gamma_2})$.
\end{prop}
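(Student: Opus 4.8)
The plan is to treat the two implications separately: the forward direction is an immediate consequence of the preceding corollary, while the reverse direction requires constructing compatible finite-index free subgroups on which the restricted Bernoulli shifts can be matched.

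For the ``only if'' direction, suppose the two shifts are virtually measurably conjugate. Since the previous corollary establishes that $\vf_\Gamma$ is a virtual measure conjugacy invariant, their virtual f-invariant entropies must coincide, and there is nothing further to check.

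For the ``if'' direction, assume $\vf_{\Gamma_1}(K_1^{\Gamma_1}, \mu_1^{\Gamma_1}) = \vf_{\Gamma_2}(K_2^{\Gamma_2}, \mu_2^{\Gamma_2})$. First I would fix free subgroups $G_i \leq \Gamma_i$ of finite index and set $r_i = r(G_i)$; because $\Gamma_i$ is virtually free but not virtually $\Z$, each $r_i \geq 2$. By the definition of the virtual f-invariant, $\f_{G_i}(K_i^{\Gamma_i}, \mu_i^{\Gamma_i}) = (r_i - 1)\cdot \vf_{\Gamma_i}(K_i^{\Gamma_i}, \mu_i^{\Gamma_i})$. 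The next step is to pass to further finite-index subgroups in order to equalize ranks: choosing $m_1 = r_2 - 1$ and $m_2 = r_1 - 1$ (both at least $1$), I would use Nielsen--Schreier to select subgroups $G_i' \leq G_i$ of index $m_i$, so that $G_i'$ is free of rank $1 + m_i(r_i - 1) = 1 + (r_1-1)(r_2-1) =: \rho$, the same value for $i = 1,2$, and $G_i'$ has finite index $|\Gamma_i : G_i|\cdot m_i$ in $\Gamma_i$. Combining the main theorem (applied to $G_i' \leq G_i$) with the definition of $\vf$ and the hypothesis then gives
$$\f_{G_i'}(K_i^{\Gamma_i}, \mu_i^{\Gamma_i}) = m_i \cdot \f_{G_i}(K_i^{\Gamma_i}, \mu_i^{\Gamma_i}) = m_i(r_i - 1)\cdot \vf_{\Gamma_i}(K_i^{\Gamma_i}, \mu_i^{\Gamma_i}) = (\rho - 1)\cdot \vf_{\Gamma_i}(K_i^{\Gamma_i}, \mu_i^{\Gamma_i}),$$
so that $\f_{G_1'}(K_1^{\Gamma_1}, \mu_1^{\Gamma_1}) = \f_{G_2'}(K_2^{\Gamma_2}, \mu_2^{\Gamma_2})$. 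I would then observe that the restriction of a Bernoulli shift to a finite-index subgroup is again a Bernoulli shift (via the decomposition of the index set $\Gamma_i$ into cosets of $G_i'$), so each $G_i' \acts (K_i^{\Gamma_i}, \mu_i^{\Gamma_i})$ is a Bernoulli shift over the free group $G_i'$ of the common rank $\rho$. Since the f-invariant entropy of a Bernoulli shift over a free group equals the Shannon entropy of its base, the two restricted shifts have equal, and (because $\vf$ is defined) finite, base entropy. Invoking the isomorphism theorem for Bernoulli shifts over free groups and composing with any isomorphism $G_1' \cong G_2'$ of rank-$\rho$ free groups produces a measure conjugacy of the restricted actions, which is precisely a virtual measure conjugacy of $\Gamma_1 \acts (X_1, \mu_1)$ and $\Gamma_2 \acts (X_2, \mu_2)$.

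The main obstacle is the single nontrivial external input in the last step: the Ornstein-type isomorphism theorem for Bernoulli shifts over free groups, asserting that equal finite base entropy implies measure conjugacy (due to Bowen). Everything else is bookkeeping with indices, ranks, and the already-established subgroup formula. Care is also needed to confirm that ``$\vf_{\Gamma_i}$ is defined'' forces the base entropies to be finite, so that the finite-entropy form of the isomorphism theorem applies, and to verify that the coset decomposition genuinely realizes the restricted systems as Bernoulli shifts over $G_i'$ with the expected base.
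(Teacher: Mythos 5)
Your proposal is correct and follows essentially the same route as the paper: the paper likewise chooses finite-index free subgroups $H_i \leq G_i$ with $|G_1 : H_1| = r(G_2) - 1$ and $|G_2 : H_2| = r(G_1) - 1$ to equalize ranks via the Nielsen--Schreier formula, identifies the restricted actions with Bernoulli shifts over $H_i$ via coset transversals, and invokes Bowen's theorem that f-invariant entropy is a complete measure conjugacy invariant for Bernoulli shifts. The only cosmetic difference is that the paper compares the f-invariant entropies of the restricted shifts directly, whereas you pass through the equality of base Shannon entropies, which is equivalent.
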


\subsection*{Organization}
In Section \ref{SEC DEFN} we cover basic definitions and notations. Then in Section \ref{SEC FINV} we define and discuss f-invariant entropy in detail. We discuss Markov processes in Section \ref{SEC MARKOV} and establish some of their basic properties. In Section \ref{SEC SUBFORM} we prove the main theorem and deduce some of its corollaries. Finally in Section \ref{SEC VIRT} we discuss applications to virtually free groups and virtual measure conjugacy.

\subsection*{Acknowledgments}
This material is based upon work supported by the National Science Foundation Graduate Student Research Fellowship under Grant No. DGE 0718128. The author would like to thank his advisor, Ralf Spatzier, for many helpful discussions and Hanfeng Li for comments on an earlier version of this paper.

\section{Definitions and Notation} \label{SEC DEFN}

In this paper all groups are assumed to be countable. We will work almost entirely with free groups, and thus there is an important distinction between multiplication on the left and multiplication on the right. We will have to work with both left-sided and right-sided notions simultaneously, and as we will point out later on, this seems to be absolutely necessary. We therefore will always use very careful notation and will always explicitly state whether we are working with multiplication on the right or with multiplication on the left.

Let $G$ be a finitely generated free group, and let $S$ be a free generating set for $G$. The \emph{rank of $G$} is the minimum size of a generating set for $G$, which in this case would be $|S|$. We denote the identity group element of $G$ by $1_G$. For $1_G \neq g \in G$, the \emph{reduced $S$-word representation of $g$} is the unique tuple $(s_1, s_2, \ldots, s_k)$ with the properties that $s_i \in S \cup S^{-1}$, $s_{i+1} \neq s_i^{-1}$, and $g = s_1 s_2 \cdots s_k$. The \emph{$S$-word length} of $g \in G$ is the length of the reduced $S$-word representation of $g$. The identity $1_G$ has $S$-word length $0$. The \emph{$S$-ball of radius $n$ in $G$ centered on $1_G$}, denoted $B_S(n)$, is the set of group elements whose $S$-word length is less than or equal to $n$. If $H \leq G$ is a subgroup, then the \emph{left $H$-cosets} are the sets $g H$ for $g \in G$. Similarly the \emph{right $H$-cosets} are the sets $H g$ for $g \in G$. A set $\Delta$ is a \emph{transversal} of the left (right) $H$-cosets if each left (right) $H$-coset meets $\Delta$ in precisely one point.

The \emph{right $S$-Cayley graph of $G$} is the graph with vertex set $G$ and edge set $\{(g, g s) \: g \in G, \ s \in S \cup S^{-1}\}$. Since $G$ is a free group and $S$ is a free generating set for $G$, the right $S$-Cayley graph of $G$ is a tree. When working with a graph $\Gamma$, we let $\V(\Gamma)$ and $\E(\Gamma)$ denote the vertex set and the edge set of $\Gamma$, respectively. A \emph{right $S$-path} is a non-self-intersecting path in the right $S$-Cayley graph of $G$. A set $F \subseteq G$ is \emph{right $S$-connected} if for every two elements $f_1, f_2 \in F$ the unique right $S$-path from $f_1$ to $f_2$ traverses only vertices in $F$. The \emph{right $S$-connected components} of $F \subseteq G$ are the maximal subsets of $F$ which are right $S$-connected. For three subsets $U, V, W \subseteq G$, we say that $V$ \emph{right $S$-separates $(U, W)$} if for every $u \in U$ and $w \in W$ the unique right $S$-path from $u$ to $w$ traverses some vertex in $V$. The \emph{right $S$-distance} between two elements $g, h \in G$ is defined to be the number of edges traversed by the unique right $S$-path from $g$ to $h$. We will also use the right $S$-distance implicitly when we refer to points which are right $S$-furthest from one another or right $S$-closest to one another. We say that $g, h \in G$ are \emph{right $S$-adjacent} if there is $s \in S \cup S^{-1}$ with $g s = h$. For $u, v \in G$, we define the \emph{right $S$-past of $u$ through $v$}, denoted $\RP_S(v, u)$, to be the set of $g \in G$ for which the unique right $S$-path from $g$ to $u$ traverses $v$. If $U, V \subseteq G$, then we define
$$\RP_S(V, U) = \bigcap_{u \in U} \bigcup_{v \in V} \RP_S(v, u).$$

The reader is encouraged to think carefully about the definition of $\RP_S(v, u)$. The truth is that the word ``past'' is somewhat misleading. As an example to consider, the right $S$-past of $1_G$ through $s \in S$, $\RP_S(s, 1_G)$, is the set of all group elements whose reduced $S$-word representations begin on the left with $s$. This can be misleading as some may be inclined to think of this set as the future. The generating set $S$ provides us with $2 |S|$ directions of movement, and we can consider any such direction as the past. Also notice that $V$ right $S$-separates $(U, W)$ if and only if $W \subseteq \RP_S(V, U)$ if and only if $U \subseteq \RP_S(V, W)$.

The \emph{left $S$-Cayley graph of $G$}, the \emph{left $S$-paths}, the \emph{left $S$-connected sets}, the \emph{left $S$-distance} between a pair of group elements, etc. are defined in a fashion similar to their right counterparts. We call a set $F \subseteq G$ \emph{bi-$S$-connected} if it is both right $S$-connected and left $S$-connected.

Unless stated otherwise, we will use the term \emph{group action} and the notation $G \acts (X, \mu)$ to mean a countable group $G$ acting on a standard probability space $(X, \mu)$ by measure preserving bijections. Our probability spaces will always be assumed to be standard probability spaces. Also, if $G$ acts on $(X, \mu)$ and $H \leq G$ is a subgroup, then we will always implicitly let $H$ act on $(X, \mu)$ by restricting the $G$ action. We will never consider any other types of actions of subgroups. Two actions $G \acts (X, \mu)$ and $G \acts (Y, \nu)$ are \emph{measurably conjugate} if there exists an isomorphism of measure spaces $\phi: (X, \mu) \rightarrow (Y, \nu)$ such that $\phi(g \cdot x) = g \cdot \phi(x)$ for every $g \in G$ and $\mu$-almost every $x \in X$. Similarly, if $G$ acts continuously on two topological spaces $X$ and $Y$, then $X$ and $Y$ are \emph{topologically conjugate} if there is a homeomorphism $\phi: X \rightarrow Y$ such that $\phi(g \cdot x) = g \cdot \phi(x)$ for every $g \in G$ and every $x \in X$.

Let $G$ act on $(X, \mu)$. If $\alpha$ and $\beta$ are measurable partitions of $X$, then $\beta$ is \emph{coarser} than $\alpha$, or $\alpha$ is a \emph{refinement} of $\beta$, if every member of $\beta$ is a union of members of $\alpha$. If $\beta$ is coarser than $\alpha$ then we write $\beta \leq \alpha$. For two partitions $\alpha = \{A_i \: i \in I\}$ and $\beta = \{B_j \: j \in J\}$ of $X$ we define their \emph{join} to be the partition
$$\alpha \vee \beta = \{A_i \cap B_j \: i \in I, \ j \in J\}.$$
We similarly define the join $\bigvee_{i = 1}^n \alpha_i$ of a finite number of partitions $\{\alpha_i \: 1 \leq i \leq n\}$. For a countably infinite collection of partitions $\{\alpha_i \: i \in I\}$ of $X$, we let
$$\bigvee_{i \in I} \alpha_i$$
denote the smallest $\sigma$-algebra containing all of the members of all of the $\alpha_i$'s. If $\{\mathcal{F}_i \: i \in I\}$ is a collection of $\sigma$-algebras on $X$, then we let $\bigvee_{i \in I} \mathcal{F}_i$ denote the smallest $\sigma$-algebra containing all of the sets of each of the $\mathcal{F}_i$'s. If $\alpha = \{A_i \: i \in I\}$ is a partition of $X$ then for $g \in G$ we define
$$g \cdot \alpha = \{g \cdot A_i \: i \in I\}.$$
Similarly, for $F \subseteq G$ we define
$$F \cdot \alpha = \bigvee_{f \in F} f \cdot \alpha.$$
Notice that $F \cdot \alpha$ is a $\sigma$-algebra if $F$ is infinite and that $g \cdot \alpha = \{g\} \cdot \alpha$ for every $g \in G$. A measurable countable partition $\alpha$ is \emph{generating} if for every measurable set $B \subseteq X$ there is a set $B' \in G \cdot \alpha$ with $\mu(B \triangle B') = 0$. The \emph{Shannon entropy} of a countable measurable partition $\alpha$ is
$$\sH(\alpha) = \sum_{A \in \alpha} -\mu(A) \cdot \log(\mu(A)).$$
If $\beta$ is another countable measurable partition of $X$, then the \emph{conditional Shannon entropy} of $\alpha$ relative to $\beta$ is
$$\sH(\alpha / \beta) = \sum_{B \in \beta} \mu(B) \cdot \left( \sum_{A \in \alpha} -\frac{\mu(A \cap B)}{\mu(B)} \cdot \log \left( \frac{\mu(A \cap B)}{\mu(B)} \right) \right).$$
If $\mathcal{F}$ is a $\sigma$-algebra on $X$ consisting of measurable sets and $f : X \rightarrow \R$ is a measurable function, then we denote the \emph{conditional expectation of $f$ relative to $\mathcal{F}$} by $\cex(f / \mathcal{F})$. Recall that $\cex(f / \mathcal{F})$ is the unique $\mathcal{F}$-measurable function, up to agreement $\mu$-almost everywhere, with the property that for every $\mathcal{F}$-measurable function $h: X \rightarrow \R$
$$\int_X h \cdot f d \mu = \int_X h \cdot \cex(f / \mathcal{F}) d \mu.$$
If $\alpha$ is a countable measurable partition, then we define $\cex(f / \alpha) = \cex(f / \mathcal{F})$ where $\mathcal{F}$ is the $\sigma$-algebra generated by $\alpha$. We define the \emph{conditional Shannon entropy} of a countable measurable partition $\alpha$ relative to a sub-$\sigma$-algebra $\mathcal{F}$ by
$$\sH(\alpha / \mathcal{F}) = \int_X \sum_{A \in \alpha} - \cex(\chi_A / \mathcal{F}) \cdot \log(\cex(\chi_A / \mathcal{F})),$$
where $\chi_A$ is the characteristic function of $A$. It is well known that if $\beta$ is a countable measurable partition of $X$ and $\mathcal{F}$ is the $\sigma$-algebra generated by $\beta$, then $\sH(\alpha / \beta) = \sH(\alpha / \mathcal{F})$.

The following lemma lists some well known properties of Shannon entropy which we will need (see \cite{Gl03} for a proof).

\begin{lem} \label{LEM SHAN}
Let $(X, \mu)$ be a standard probability space, let $\alpha$ and $\beta$ be countable measurable partitions of $X$, and let $\mathcal{F}$, $\mathcal{F}'$, and $(\mathcal{F}_i)_{i \in \N}$ be $\sigma$-algebras on $X$ consisting of measurable sets. Assume that $\mathcal{F} \subseteq \mathcal{F}'$. Then
\begin{enumerate}
\item[\rm (i)] $\sH(\alpha \vee \beta) = \sH(\alpha / \beta) + \sH(\beta)$;
\item[\rm (ii)] $\sH(\alpha / \beta \vee \mathcal{F}) + \sH(\beta / \mathcal{F}) = \sH(\alpha \vee \beta / \mathcal{F}) = \sH(\beta / \alpha \vee \mathcal{F}) + \sH(\alpha / \mathcal{F});$
\item[\rm (iii)] $\sH(\alpha / \mathcal{F}') \leq \sH(\alpha / \mathcal{F})$;
\item[\rm (iv)] $\sH(\alpha / \bigvee_{i \in \N} \mathcal{F}_i) = \lim_{n \rightarrow \infty} \sH(\alpha / \bigvee_{i = 1}^n \mathcal{F}_i)$;
\end{enumerate}
Furthermore, if $\cex(\chi_A | \mathcal{F}')(x) = \cex(\chi_A | \mathcal{F})(x)$ for every $A \in \alpha$ and $\mu$-almost every $x \in X$ then equality holds in clause (iii). Conversely, if $\sH(\alpha) < \infty$ and equality holds in (iii), then $\cex(\chi_A | \mathcal{F}')(x) = \cex(\chi_A | \mathcal{F})(x)$ for every $A \in \alpha$ and $\mu$-almost every $x \in X$.
\end{lem}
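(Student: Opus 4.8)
The plan is to reduce every clause to two analytic facts about the strictly concave function $\varphi(t) = -t\log t$ on $[0,1]$ (with $\varphi(0)=0$): the pointwise splitting of the logarithm, and the conditional Jensen inequality together with its equality case. Throughout I will use the information-function representation
$$\sH(\alpha / \mathcal{F}) = \int_X \sum_{A \in \alpha} -\chi_A \cdot \log \cex(\chi_A / \mathcal{F}) \, d\mu,$$
which agrees with the definition given above because $\log\cex(\chi_A/\mathcal{F})$ is $\mathcal{F}$-measurable, so $\int \chi_A \log\cex(\chi_A/\mathcal{F})\,d\mu = \int \cex(\chi_A/\mathcal{F})\log\cex(\chi_A/\mathcal{F})\,d\mu$ by the defining property of $\cex(\,\cdot\,/\mathcal{F})$. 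I will also freely use the remark preceding the lemma, that $\sH(\alpha/\beta) = \sH(\alpha/\mathcal{F})$ when $\mathcal{F}$ is generated by $\beta$.

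For clause (i) I would write $\mu(A\cap B) = \mu(B)\cdot\tfrac{\mu(A\cap B)}{\mu(B)}$, split $\log\mu(A\cap B)$ into $\log\mu(B) + \log\tfrac{\mu(A\cap B)}{\mu(B)}$, and sum over $A$ (using $\sum_A \tfrac{\mu(A\cap B)}{\mu(B)} = 1$) and then over $B$; the two resulting sums are exactly $\sH(\beta)$ and $\sH(\alpha/\beta)$. Clause (ii) is the same computation carried out fiberwise over $\mathcal{F}$. The key is the conditional-expectation chain rule
$$\cex(\chi_{A\cap B}/\mathcal{F}) = \cex(\chi_B/\mathcal{F})\cdot\cex(\chi_A/\beta\vee\mathcal{F}) \quad \mu\text{-a.e. on } B,$$
which I would verify from the defining property of conditional expectation. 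Taking logarithms on $A\cap B$ and summing yields the pointwise identity of information functions $I(\alpha\vee\beta/\mathcal{F}) = I(\beta/\mathcal{F}) + I(\alpha/\beta\vee\mathcal{F})$; integrating gives the first equality of (ii), and interchanging the roles of $\alpha$ and $\beta$ gives the second.

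For clause (iii), fix $A\in\alpha$ and apply the conditional Jensen inequality to $\varphi$ and the $\sigma$-algebra $\mathcal{F}\subseteq\mathcal{F}'$: since $\cex(\cex(\chi_A/\mathcal{F}')/\mathcal{F}) = \cex(\chi_A/\mathcal{F})$ by the tower property, we get $\cex(\varphi(\cex(\chi_A/\mathcal{F}'))/\mathcal{F}) \le \varphi(\cex(\chi_A/\mathcal{F}))$ pointwise a.e. Integrating and using $\int\cex(g/\mathcal{F})\,d\mu = \int g\,d\mu$ gives $\int\varphi(\cex(\chi_A/\mathcal{F}'))\,d\mu \le \int\varphi(\cex(\chi_A/\mathcal{F}))\,d\mu$; summing over $A$ yields (iii). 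The forward implication of the equality clause is immediate, since equal conditional expectations give identical integrands. For the converse, finiteness of $\sH(\alpha)$ makes all integrals finite, so equality in (iii) upgrades the pointwise inequality to the a.e. equality $\cex(\varphi(\cex(\chi_A/\mathcal{F}'))/\mathcal{F}) = \varphi(\cex(\cex(\chi_A/\mathcal{F}')/\mathcal{F}))$ for each $A$; by the strict concavity of $\varphi$, equality in conditional Jensen forces $\cex(\chi_A/\mathcal{F}')$ to be $\mathcal{F}$-measurable, whence $\cex(\chi_A/\mathcal{F}') = \cex(\chi_A/\mathcal{F})$.

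Finally, for clause (iv) set $\mathcal{G}_n = \bigvee_{i=1}^n\mathcal{F}_i$, so that $\mathcal{G}_n$ increases to $\mathcal{G}_\infty = \bigvee_{i\in\N}\mathcal{F}_i$. By clause (iii) the sequence $\sH(\alpha/\mathcal{G}_n)$ is non-increasing and bounded below by $\sH(\alpha/\mathcal{G}_\infty)$, so the limit exists and dominates $\sH(\alpha/\mathcal{G}_\infty)$; the content is the reverse inequality. I would obtain it from L\'evy's upward martingale convergence theorem, which gives $\cex(\chi_A/\mathcal{G}_n)\to\cex(\chi_A/\mathcal{G}_\infty)$ a.e. and hence pointwise convergence of the information functions $I(\alpha/\mathcal{G}_n)\to I(\alpha/\mathcal{G}_\infty)$. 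The main obstacle, which I expect to be the technical heart of the proof, is promoting this to convergence of the integrals: the integrands are unbounded where the conditional probabilities are small, so bounded convergence does not apply directly. The standard remedy is a maximal inequality of Doob type, showing that $\sup_n I(\alpha/\mathcal{G}_n)$ is $\mu$-integrable whenever $\sH(\alpha)<\infty$; dominated convergence then closes the finite-entropy case, and a truncation of $\alpha$ into finitely many atoms plus a tail, combined with monotone convergence, handles the general case.
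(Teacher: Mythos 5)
The paper itself gives no proof of this lemma --- it is quoted as standard with a citation to Glasner --- so the relevant comparison is with the textbook argument, and for clauses (i)--(iii) your proposal is exactly that argument and is correct: the splitting of the logarithm for (i); the fiberwise chain rule $\cex(\chi_{A \cap B} / \mathcal{F}) = \cex(\chi_B / \mathcal{F}) \cdot \cex(\chi_A / \beta \vee \mathcal{F})$ a.e.\ on $B$ for (ii), which does verify directly against the defining property of conditional expectation since every $\beta \vee \mathcal{F}$-measurable function has the form $\sum_{B \in \beta} \chi_B \, g_B$ with each $g_B$ $\mathcal{F}$-measurable; and conditional Jensen plus the tower property for (iii). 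Your equality analysis in (iii) is also sound: the hypothesis $\sH(\alpha) < \infty$ is precisely what lets you pass from equality of the sums over $A \in \alpha$ (each term finite, each termwise inequality in the same direction) to termwise equality, then to a.e.\ vanishing of the nonnegative integrand $\varphi(\cex(\chi_A/\mathcal{F})) - \cex(\varphi(\cex(\chi_A/\mathcal{F}'))/\mathcal{F})$, and finally, by strict concavity of $\varphi$, to $\mathcal{F}$-measurability of $\cex(\chi_A/\mathcal{F}')$.

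The genuine gap is the last sentence of your treatment of (iv). The finite-entropy case is fine --- indeed easier than you fear: for each fixed atom $A$ the integrand $\varphi(\cex(\chi_A/\mathcal{G}_n))$ is bounded by $e^{-1}$, so per-atom convergence follows from martingale convergence and bounded convergence with no maximal inequality; the only real issue is interchanging the limit with the sum over atoms, and your own truncation device settles it via clause (ii): $\sH(\alpha/\mathcal{G}_n) = \sH(\alpha_m/\mathcal{G}_n) + \sH(\alpha/\alpha_m \vee \mathcal{G}_n) \leq \sH(\alpha_m/\mathcal{G}_n) + \sH(\alpha) - \sH(\alpha_m)$, where $\alpha_m$ consists of the $m$ largest atoms plus the tail. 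But the claim that truncation plus monotone convergence ``handles the general case'' fails: the error term $\sH(\alpha/\alpha_m) = \sH(\alpha) - \sH(\alpha_m)$ does not tend to $0$ when $\sH(\alpha) = \infty$, and no repair is possible because clause (iv) is simply false without a finiteness hypothesis. Take $X = [0,1]$ with Lebesgue measure, $\alpha$ a countable partition into intervals with $\sH(\alpha) = \infty$, and $\mathcal{F}_i$ generated by the dyadic intervals of level $i$, so that $\mathcal{G}_n = \bigvee_{i=1}^n \mathcal{F}_i$ is the level-$n$ dyadic algebra. Then $\bigvee_{i \in \N} \mathcal{F}_i$ is the full Borel $\sigma$-algebra, so $\sH(\alpha / \bigvee_{i \in \N} \mathcal{F}_i) = 0$, while $\sH(\alpha/\mathcal{G}_n) \geq \sH(\alpha) - n \log 2 = \infty$ for every $n$. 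So (iv) requires $\sH(\alpha) < \infty$ (or at least $\sH(\alpha/\mathcal{G}_{n_0}) < \infty$ for some $n_0$); the statement as printed omits this hypothesis, but the paper only ever invokes (iv) for partitions of finite Shannon entropy (as in Lemma \ref{LEM INTUIT}), so your proof should simply assume it rather than attempt the spurious extension.
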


\section{f-invariant entropy} \label{SEC FINV}

Let $G$ be a finitely generated free group, let $S$ be a free generating set for $G$, and let $G$ act on $(X, \mu)$. For a countable measurable partition $\alpha$ with $\sH(\alpha) < \infty$ we define
$$F_G(X, \mu, S, \alpha) = (1 - 2r) \cdot \sH(\alpha) + \sum_{s \in S} \sH(s \cdot \alpha \vee \alpha),$$
where $r = |S|$ is the rank of $G$. Notice that by clause (i) of Lemma \ref{LEM SHAN} we can rewrite this expression in two ways:
$$F_G(X, \mu, S, \alpha) = (1 - r) \cdot \sH(\alpha) + \sum_{s \in S} \sH(s \cdot \alpha / \alpha);$$
$$F_G(X, \mu, S, \alpha) = \sH(\alpha) + \sum_{s \in S} (\sH(s \cdot \alpha / \alpha) - \sH(\alpha)).$$
All three ways of expressing $F_G(X, \mu, S, \alpha)$ will be useful to us. We define the \emph{f-invariant entropy rate of $(S, \alpha)$} to be
$$f_G(X, \mu, S, \alpha) = \lim_{n \rightarrow \infty} F_G(X, \mu, S, B_S(n) \cdot \alpha),$$
where $B_S(n)$ is the ball of radius $n$ in $G$, with respect to the generating set $S$, centered on the identity. Regarding the existence of this limit, Bowen proved the following.

\begin{lem}[Bowen, \cite{B10a}] \label{LEM FDECR}
Let $G$ be a finitely generated free group, let $S$ be a free generating set for $G$, and let $G$ act on $(X, \mu)$. If $U \subseteq V \subseteq G$ are finite and every left $S$-connected component of $V$ meets $U$, then for every countable measurable partition $\alpha$ with $\sH(\alpha) < \infty$ we have
$$F_G(X, \mu, S, V \cdot \alpha) \leq F_G(X, \mu, S, U \cdot \alpha).$$
\end{lem}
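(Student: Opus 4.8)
The plan is to reduce the inequality to the single step of adjoining one element and then to verify that step by a direct entropy computation whose crucial feature is that a left $S$-adjacency forces one conditional entropy term to vanish exactly. For the reduction, recall that the left $S$-Cayley graph is a tree, so each left $S$-connected component of $V$ is an induced subtree, and by hypothesis each such component meets $U$. I would therefore enumerate $V \setminus U$ as $v_1, \dots, v_k$ so that, writing $W_i = U \cup \{v_1, \dots, v_i\}$, each $v_{i+1}$ is left $S$-adjacent to some element of $W_i$: within each component, run a breadth-first search rooted at a point of $U$ and list every vertex after its parent, which lies in the same component (hence in the current set) because the component is connected. Chaining the resulting one-step inequalities $F_G(X,\mu,S,W_{i+1}\cdot\alpha) \le F_G(X,\mu,S,W_i\cdot\alpha)$ then yields $F_G(X,\mu,S,V\cdot\alpha)\le F_G(X,\mu,S,U\cdot\alpha)$. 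Note every partition in sight is a finite join of translates of $\alpha$, so all Shannon entropies are finite and the manipulations below are legitimate.

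For the single step, suppose $v \notin W$ is left $S$-adjacent to some $u \in W$, and set $\beta = W\cdot\alpha$, $\eta = v\cdot\alpha$, and $\Theta_s = s\cdot\beta \vee \beta$ for $s \in S$. Using the defining formula $F_G(X,\mu,S,\gamma) = (1-2r)\sH(\gamma) + \sum_{s\in S}\sH(s\cdot\gamma\vee\gamma)$ together with clause (i) of Lemma \ref{LEM SHAN}, a short computation gives
$$F_G(X,\mu,S,W\cdot\alpha) - F_G(X,\mu,S,(W\cup\{v\})\cdot\alpha) = (2r-1)\,\sH(\eta/\beta) - \sum_{s\in S}\sH\big(s\cdot\eta\vee\eta \,/\, \Theta_s\big).$$
Thus it suffices to show $\sum_{s\in S}\sH(s\cdot\eta\vee\eta/\Theta_s) \le (2r-1)\,\sH(\eta/\beta)$. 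Splitting each summand by clause (ii) as $\sH(s\cdot\eta\vee\eta/\Theta_s) = \sH(\eta/\Theta_s) + \sH(s\cdot\eta/\eta\vee\Theta_s)$, I would bound the two pieces by clause (iii): first $\sH(\eta/\Theta_s) \le \sH(\eta/\beta)$ since $\beta \subseteq \Theta_s$; second, applying $s^{-1}$ (a measure isomorphism) gives $\sH(s\cdot\eta/\eta\vee\Theta_s) = \sH(\eta/s^{-1}\cdot\eta\vee\beta\vee s^{-1}\cdot\beta) \le \sH(\eta/\beta)$, again since the conditioning $\sigma$-algebra contains $\beta$. These naive estimates yield only the bound $2r\cdot\sH(\eta/\beta)$.

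The saving of one full term is exactly where the adjacency enters, and this is the main obstacle to getting the sharp constant. Write $v = s_0 u$ with $s_0 \in S \cup S^{-1}$ and $u \in W$. If $s_0 \in S$, then $\eta = (s_0 u)\cdot\alpha \subseteq s_0\cdot\beta \subseteq \Theta_{s_0}$, so $\sH(\eta/\Theta_{s_0}) = 0$ and the first family of bounds improves to $(r-1)\sH(\eta/\beta)$. If instead $s_0 = t^{-1}$ with $t \in S$, then $t\cdot\eta = (t v)\cdot\alpha = u\cdot\alpha \subseteq \beta \subseteq \eta\vee\Theta_t$, so $\sH(t\cdot\eta/\eta\vee\Theta_t) = 0$ and the second family improves to $(r-1)\sH(\eta/\beta)$. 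In either case $\sum_{s\in S}\sH(s\cdot\eta\vee\eta/\Theta_s) \le (2r-1)\,\sH(\eta/\beta)$, so the displayed difference is nonnegative, completing the single step and hence the lemma. The delicate point throughout is precisely that the naive conditioning bounds lose a factor $2r$ rather than the required $2r-1$, and recovering the missing unit demands locating the one conditional entropy that the adjacency annihilates.
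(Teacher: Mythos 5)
The paper itself gives no proof of this lemma --- it is stated as a quoted result of Bowen \cite{B10a} --- so there is no internal argument to compare against; I have therefore checked your proof on its own merits, and it is correct. The reduction is sound: since left $S$-connected subsets of the left $S$-Cayley tree are exactly those whose induced subgraph is a (connected) subtree, the breadth-first enumeration does produce an ordering of $V \setminus U$ in which each new element is left $S$-adjacent to an element already present. The one-step computation is also exact: with $\beta = W\cdot\alpha$, $\eta = v\cdot\alpha$, $\Theta_s = s\cdot\beta\vee\beta$, clause (i) of Lemma \ref{LEM SHAN} gives precisely
$$F_G(X,\mu,S,W\cdot\alpha) - F_G(X,\mu,S,(W\cup\{v\})\cdot\alpha) = (2r-1)\,\sH(\eta/\beta) - \sum_{s\in S}\sH\bigl(s\cdot\eta\vee\eta / \Theta_s\bigr),$$
all quantities being finite because $\sH(\alpha)<\infty$ and the sets are finite. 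Your case analysis on $v = s_0 u$ is the key point and is handled correctly: if $s_0 \in S$ then $\eta$ is coarser than $s_0\cdot\beta$, killing $\sH(\eta/\Theta_{s_0})$, while if $s_0 = t^{-1}$ then $t\cdot\eta = u\cdot\alpha$ is coarser than $\beta$, killing $\sH(t\cdot\eta/\eta\vee\Theta_t)$; either way the naive bound $2r\,\sH(\eta/\beta)$ improves to $(2r-1)\,\sH(\eta/\beta)$, which is exactly what is needed. For the record, this is in substance Bowen's own argument: your one-element extensions are his ``simple splittings,'' and the single vanishing conditional entropy term is the same cancellation that drives his proof, so your reconstruction is faithful to the original rather than a genuinely different route.
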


In particular, the terms appearing in the limit defining $f_G(X, \mu, S, \alpha)$ are non-increasing and thus the limit exists, although it may be negative infinity. If there exists a generating partition $\alpha$ having finite Shannon entropy, then the \emph{f-invariant entropy of $G \acts (X, \mu)$} is defined to be
$$f_G(X, \mu) = f_G(X, \mu, S, \alpha).$$
If there is no generating partition having finite Shannon entropy, then the f-invariant entropy of the action is not defined. Amazingly, the value of the f-invariant entropy does not depend on the choice of generating partition nor on the choice of free generating set for $G$, as the following theorem of Bowen states.

\begin{thm}[Bowen, \cite{B10a}, \cite{B10c}]
Let $G$ be a finitely generated free group acting on a probability space $(X, \mu)$. If $S$ and $T$ are free generating sets for $G$ and $\alpha$ and $\beta$ are generating partitions with finite Shannon entropy, then
$$f_G(X, \mu, S, \alpha) = f_G(X, \mu, T, \beta).$$
\end{thm}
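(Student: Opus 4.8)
The plan is to prove the two invariance statements separately and then combine them: for a fixed free generating set $S$ the value $\f_G(X, \mu, S, \alpha)$ does not depend on the finite Shannon entropy generating partition $\alpha$, and for a fixed such partition $\beta$ the value $\f_G(X, \mu, S, \beta)$ does not depend on the free generating set $S$. Granting both, the theorem follows from the chain $\f_G(X, \mu, S, \alpha) = \f_G(X, \mu, S, \beta) = \f_G(X, \mu, T, \beta)$, where the first equality is partition invariance (valid since $\beta$ is a finite Shannon entropy generating partition by hypothesis, and these properties are intrinsic to $\beta$) and the second is generating-set invariance applied to the fixed partition $\beta$.

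Partition invariance (the content of \cite{B10a}). First I would reduce to the case of refinements. Given two finite Shannon entropy generating partitions $\alpha$ and $\beta$, their join $\gamma = \alpha \vee \beta$ refines both and is again generating, with $\sH(\gamma) \leq \sH(\alpha) + \sH(\beta) < \infty$ by Lemma \ref{LEM SHAN}(i); so it suffices to show $\f_G(X, \mu, S, \alpha) = \f_G(X, \mu, S, \gamma)$ whenever $\alpha \leq \gamma$ are both finite Shannon entropy generating partitions. Two ingredients drive this. First, $\f_G(X, \mu, S, \cdot)$ is unchanged under the operation $\alpha \mapsto F \cdot \alpha$ for any finite left-$S$-connected $F \ni 1_G$: since $B_S(n) \cdot (F \cdot \alpha) = (B_S(n) F) \cdot \alpha$ and $\{B_S(n) F\}_n$ is an exhaustion to which the monotonicity of Lemma \ref{LEM FDECR} applies, the limit may be computed along $\{B_S(n)F\}_n$ in place of $\{B_S(n)\}_n$, giving $\f_G(X, \mu, S, F \cdot \alpha) = \f_G(X, \mu, S, \alpha)$. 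Second, because $\alpha$ is generating the increasing $\sigma$-algebras $B_S(n) \cdot \alpha$ join to the full algebra, so by Lemma \ref{LEM SHAN}(iv) we get $\sH(\gamma / B_S(n) \cdot \alpha) \to 0$. Feeding this into the conditional form $F_G(X, \mu, S, \sigma) = (1-r) \sH(\sigma) + \sum_{s \in S} \sH(s \cdot \sigma / \sigma)$ and expanding $F_G(X, \mu, S, B_S(n) \cdot \gamma) - F_G(X, \mu, S, B_S(n) \cdot \alpha)$ term by term shows the boundary and shift corrections vanish in the limit. The delicate point is the bookkeeping of the sum over $s \in S$: one must control each $\sH(s \cdot B_S(n) \cdot \gamma / B_S(n) \cdot \gamma)$ against its $\alpha$-analogue uniformly in $n$, which is where the tree structure and Lemma \ref{LEM FDECR} are used most heavily.

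Generating-set invariance (the content of \cite{B10c}). This is the main obstacle, because two free generating sets $S$ and $T$ endow $G$ with incompatible Cayley trees: the balls $B_S(n)$ and $B_T(n)$ are mutually incomparable, so the term-by-term comparison used above is unavailable. The strategy I would follow is to identify $\f_G(X, \mu, S, \beta)$ with a quantity that makes no reference to $S$. Concretely, one realizes the f-invariant entropy rate as the sofic entropy of $G \acts (X, \mu)$ with respect to the sofic approximation of $G$ obtained from uniformly random homomorphisms $G \to \Sym(d)$ as $d \to \infty$. Since sofic entropy is defined intrinsically from the action together with a sofic approximation, and the random-homomorphism approximations arising from $S$ and from $T$ can be shown to compute the same entropy, the value is seen to be independent of the generating set. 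An alternative, more combinatorial route would be to use that any two free generating sets of $G$ are connected by a finite sequence of Nielsen transformations and to verify directly that each elementary move leaves $\lim_n F_G(X, \mu, S, B_S(n) \cdot \beta)$ unchanged; this, however, requires tracking how the exhausting balls and the defining sum transform under an elementary automorphism and is considerably more intricate than the sofic argument.
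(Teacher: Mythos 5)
This theorem is not proved in the paper at all: it is quoted as Bowen's result, with \cite{B10a} supplying invariance in the generating partition and \cite{B10c} invariance in the generating set, and the paper then uses it as a black box. So there is no internal proof to compare your proposal against; the only fair comparison is with the cited literature, and there your outline does reproduce the correct division of labor. Your reduction steps are sound: the join $\gamma = \alpha \vee \beta$ is generating with $\sH(\gamma) \leq \sH(\alpha) + \sH(\beta) < \infty$, and your first ingredient --- that $f_G(X, \mu, S, F \cdot \alpha) = f_G(X, \mu, S, \alpha)$ for finite $F \ni 1_G$ --- is correct and can be made rigorous exactly as you indicate, by interleaving $B_S(n) \subseteq B_S(n)F \subseteq B_S(n+R)$ and applying Lemma \ref{LEM FDECR} (each $B_S(n)f$ is a right translate of the left $S$-connected ball, so $B_S(n)F$ is left $S$-connected once $F \subseteq B_S(n)$).

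However, as a proof the proposal has two genuine gaps, and they are precisely the two theorems being cited. For partition invariance, the ``delicate bookkeeping'' you defer is the entire content of \cite{B10a}: any term-by-term comparison of $F_G(X, \mu, S, B_S(n) \cdot \gamma)$ with $F_G(X, \mu, S, B_S(n) \cdot \alpha)$ accumulates one error term per element of $B_S(n)$. For instance, for nested partitions one has the clean estimate $|F_G(X, \mu, S, Q) - F_G(X, \mu, S, P)| \leq (2r-1) \, \sH(Q / P)$ for $P \leq Q$, but applying it with $P = B_S(n) \cdot \alpha$ and $Q = B_S(n) \cdot \gamma$ only bounds the difference by $(2r-1) \, |B_S(n)| \cdot \sH(\gamma / \alpha)$, and $|B_S(n)| \to \infty$; knowing that $\sH(\gamma / B_S(n) \cdot \alpha) \to 0$ does not by itself kill a sum of roughly $|B_S(n)|$ error terms, since the conditional entropy decays at no controlled rate. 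Overcoming this requires a mechanism your sketch does not supply. For generating-set invariance, what you propose is not an argument but a citation: the identification of $f_G(X, \mu, S, \beta)$ with sofic entropy along the random-homomorphism sofic approximation \emph{is} the main theorem of \cite{B10c}, and the phrase ``can be shown to compute the same entropy'' hides all of it (note also that the random sofic approximation is intrinsic --- $\Hom(G, \Sym(d))$ with uniform measure does not depend on a generating set --- which is exactly why that theorem yields invariance; there is no separate comparison of the $S$- and $T$-approximations to be made). The Nielsen-transformation route you mention as an alternative has, to my knowledge, never been carried out. In sum, your proposal is a correct roadmap of how the literature proves the theorem, but both of its load-bearing steps are left unproved.
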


A simple computation shows that when $G = \Z$ the f-invariant entropy is identical to the classical Kolmogorov--Sinai entropy. Furthermore, in \cite{B10a} Bowen calculated the f-invariant entropy of a Bernoulli shift $(K^G, \mu^G)$ to be the same as in the setting of amenable groups:
$$f_G \left(K^G, \mu^G \right) = \sum_{k \in K} - \mu(k) \cdot \log(\mu(k))$$
under the assumption that the support of $\mu$ is countable and this sum is finite. If the support of $\mu$ is not countable or the sum above is not finite, then  the f-invariant entropy is undefined (as Kerr--Li \cite{KL11b} proved there can be no finite Shannon entropy generating partition). Bowen further proved that f-invariant entropy is a complete invariant for measure conjugacy among the Bernoulli shifts on which it is defined. This generalizes the famous theorems of Ornstein (\cite{O70a}) and Kolmogorov (\cite{Ko58}, \cite{Ko59}).

We remark that f-invariant entropy involves taking some sort of ``average'' over the balls $B_S(n)$, just as Kolmogorov--Sinai entropy involves averaging over F{\o}lner sets. Since balls in free groups have relatively large boundary, the ``averaging'' happens by letting the interior of the ball and the boundary of the ball nearly completely cancel one another, leaving an ``average'' value behind. This intuitive viewpoint is based on the fact that if $K \subseteq G$ is finite and left $S$-connected then
$$1 = (1 - 2r)|K| + \sum_{s \in S} |s K \cup K|,$$
as the reader is invited to verify by induction (compare this with $F_G(X, \mu, S, K \cdot \alpha)$).

While f-invariant entropy does share some strong similarities with Kolmogorov--Sinai entropy, it also possesses some properties which are somewhat baffling from the classical entropy theory perspective. For example, a short computation shows that if $G$ acts on a set of $n$ points equipped with the uniform probability measure then the f-invariant entropy of this action is $(1 - r) \cdot \log(n)$, where $r$ is the rank of $G$. If $n > 1$ and $G \neq \Z$ then this value is finite and negative! Another strange property is that the f-invariant entropy of a factor can be larger than the f-invariant entropy of the original action \cite{B10e}.

\section{Markov processes} \label{SEC MARKOV}

Markov processes are somewhat similar to Bernoulli shifts as they are characterized by the existence of a generating partition with strong independence properties. We point out that when f-invariant entropy is not involved, we discuss Markov processes in the context of free groups without any finite generation assumption. However we do assume that all of our free groups are countable.

\begin{defn}[Bowen, \cite{B10d}]
Let $G$ be a free group, let $S$ be a free generating set for $G$, let $G$ act on $(X, \mu)$, and let $\alpha$ be a countable measurable partition of $X$. We call $X$ a \emph{$(S, \alpha)$-Markov process} if $\alpha$ is a generating partition and for every $A \in \alpha$, $s \in S \cup S^{-1}$, and $\mu$-almost every $x \in X$
$$\cex \left(\chi_{s \cdot A} / \RP_S(1_G, s) \cdot \alpha \right)(x) = \cex(\chi_{s \cdot A} / \alpha)(x),$$
where $\chi_{s \cdot A}$ is the characteristic function of the set $s \cdot A$. We say that $X$ is a \emph{$\alpha$-Markov process} if it is a $(S, \alpha)$-Markov process for some $S$, and we similarly say that $X$ is a \emph{$S$-Markov process} if it is a $(S, \alpha)$-Markov process for some $\alpha$. If $X$ is a $\alpha$-Markov process, then we call $\alpha$ a \emph{Markov partition}. Finally, we say that $X$ is a \emph{Markov process} if it is a $(S, \alpha)$-Markov process for some $S$ and some $\alpha$.
\end{defn}

In the next section we will show that under a mild assumption the property of being a Markov process does not depend on the free generating set $S$ chosen for $G$ (the Markov partition however will depend on the free generating set chosen).

Our interest in Markov processes comes from the fact that the formulas for both Shannon entropy and f-invariant entropy simplify. The reason why this simplification occurs is due to Lemma \ref{LEM SHAN}. That lemma immediately leads to an alternate characterization of Markov processes which is substantially easier to work with.

\begin{lem}[Bowen, \cite{B10d}] \label{LEM DEFN}
Let $G$ be a free group, let $S$ be a free generating set for $G$, let $G$ act on $(X, \mu)$, and let $\alpha$ be a countable measurable partition of $X$ with $\sH(\alpha) < \infty$. Then $X$ is a $(S, \alpha)$-Markov process if and only if $\alpha$ is generating and
$$\sH \left(s \cdot \alpha / \RP_S(1_G, s) \cdot \alpha \right) = \sH(s \cdot \alpha / \alpha)$$
for every $s \in S \cup S^{-1}$.
\end{lem}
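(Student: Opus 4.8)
The plan is to prove the equivalence by connecting the conditional-expectation condition defining a $(S,\alpha)$-Markov process directly to the equality of conditional Shannon entropies, using the final clause of Lemma \ref{LEM SHAN} as the bridge. The key observation is that $\alpha$ is a refinement of nothing relevant here, but rather that $\alpha$ generates a sub-$\sigma$-algebra of the $\sigma$-algebra generated by $\RP_S(1_G, s) \cdot \alpha$. Indeed, since $1_G \in \RP_S(1_G, s)$ (the trivial right $S$-path from $1_G$ to itself certainly traverses $1_G$), the partition $\alpha = 1_G \cdot \alpha$ is one of the factors in the join $\RP_S(1_G, s) \cdot \alpha$, so the $\sigma$-algebra $\mathcal{F}$ generated by $\alpha$ is contained in the $\sigma$-algebra $\mathcal{F}'$ generated by $\RP_S(1_G, s) \cdot \alpha$. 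This places us exactly in the hypothesis setting $\mathcal{F} \subseteq \mathcal{F}'$ of Lemma \ref{LEM SHAN}.

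First I would fix $s \in S \cup S^{-1}$ and set $\mathcal{F} = $ the $\sigma$-algebra generated by $\alpha$ and $\mathcal{F}' = $ the $\sigma$-algebra generated by $\RP_S(1_G, s) \cdot \alpha$, having just noted $\mathcal{F} \subseteq \mathcal{F}'$. Applying clause (iii) of Lemma \ref{LEM SHAN} to the partition $s \cdot \alpha$ gives the inequality $\sH(s \cdot \alpha / \mathcal{F}') \leq \sH(s \cdot \alpha / \mathcal{F})$, i.e. $\sH(s \cdot \alpha / \RP_S(1_G, s) \cdot \alpha) \leq \sH(s \cdot \alpha / \alpha)$, always. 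The content of the lemma is therefore that equality in this inequality is equivalent to the defining conditional-expectation identity. This is precisely what the ``Furthermore $\ldots$ Conversely'' sentence of Lemma \ref{LEM SHAN} supplies: equality holds in (iii) if and only if $\cex(\chi_{s \cdot A} / \mathcal{F}')(x) = \cex(\chi_{s \cdot A} / \mathcal{F})(x)$ for every $A \in \alpha$ (so that $s \cdot A$ ranges over the blocks of $s \cdot \alpha$) and $\mu$-almost every $x$.

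For the forward direction, I would assume $X$ is an $(S, \alpha)$-Markov process, so $\alpha$ is generating and the conditional-expectation identity holds for every $A \in \alpha$ and $s$; the ``Furthermore'' clause then yields equality in (iii), which is the displayed entropy equation. For the converse I would assume $\alpha$ is generating and the entropy equation holds for every $s$, and invoke the ``Conversely'' clause — here is where the finite Shannon entropy hypothesis $\sH(\alpha) < \infty$ is essential, since that clause requires $\sH(s \cdot \alpha) < \infty$, which follows from $\sH(s \cdot \alpha) = \sH(\alpha) < \infty$ by measure-preservation of the action. This recovers the conditional-expectation identity for every $A \in \alpha$ and $s$, hence $X$ is an $(S, \alpha)$-Markov process.

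The main obstacle, such as it is, is purely bookkeeping: one must verify carefully that the conditional-expectation condition stated for the blocks $s \cdot A$ of $s \cdot \alpha$ matches exactly the conclusion of Lemma \ref{LEM SHAN} applied to the partition $s \cdot \alpha$, and that the finiteness hypothesis needed for the converse direction is genuinely available (it is, via $\sH(s \cdot \alpha) = \sH(\alpha) < \infty$). No limiting argument or use of clause (iv) is required, since the $\sigma$-algebra $\RP_S(1_G, s) \cdot \alpha$ is handled as a single sub-$\sigma$-algebra rather than as an increasing union.
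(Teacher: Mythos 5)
Your proposal is correct and takes essentially the same route as the paper: the paper's proof also dispatches the forward direction via the definition of conditional Shannon entropy and the converse via the equality criterion in the final clauses of Lemma \ref{LEM SHAN}, using $\sH(\alpha) < \infty$ exactly where you do. Your write-up merely makes explicit the details the paper leaves implicit, namely the containment of $\sigma$-algebras coming from $1_G \in \RP_S(1_G, s)$ and the transfer of finiteness $\sH(s \cdot \alpha) = \sH(\alpha) < \infty$ by measure preservation.
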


As a convenience to the reader, we include the proof below.

\begin{proof}
First suppose that $X$ is a $(S, \alpha)$-Markov process. Then $\alpha$ is a generating partition and it immediately follows from the definition of conditional Shannon entropy that
$$\sH(s \cdot \alpha / \RP_S(1_G, s) \cdot \alpha) = \sH(s \cdot \alpha / \alpha).$$
Now suppose that $\alpha$ is a generating partition and
$$\sH(s \cdot \alpha / \RP_S(1_G, s) \cdot \alpha) = \sH(s \cdot \alpha / \alpha)$$
for every $s \in S \cup S^{-1}$. As $\sH(\alpha) < \infty$, it immediately follows from Lemma \ref{LEM SHAN} that $X$ is a $(S, \alpha)$-Markov process.
\end{proof}

Thus various conditional Shannon entropies can simplify substantially when working with Markov processes. This fact is also evident in the next lemma.

\begin{defn} \label{DEFN REDGE}
Let $G$ be a free group, and let $S$ be a free generating set for $G$. If $F \subseteq G$ is finite and right $S$-connected, then we define an element $R_S(F)$ in the additive abelian group $\bigoplus_{s \in S} \Z \cdot s$ by setting
$$R_S(F) = \sum_{s \in S} a_s \cdot s,$$
where $a_s$ is the number of pairs $(g, gs)$ with $g, gs \in F$.
\end{defn}

\begin{lem} \label{LEM SHANEDGE}
Let $G$ be a free group acting on a probability space $(X, \mu)$. Suppose that $X$ is a $(S, \alpha)$-Markov process with $\sH(\alpha) < \infty$. If $F \subseteq G$ is finite and right $S$-connected then
$$\sH(F \cdot \alpha) = \sH(\alpha) + \zeta(R_S(F)),$$
where $\zeta : (\bigoplus_{s \in S} \Z \cdot s) \rightarrow \R$ is the linear extension of the map $s \mapsto \sH(s \cdot \alpha / \alpha)$.
\end{lem}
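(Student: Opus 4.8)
The plan is to induct on the cardinality of $F$. Since $G$ is free and $S$ is a free generating set, the right $S$-Cayley graph is a tree, so a finite right $S$-connected set $F$ is exactly the vertex set of a finite subtree. When $|F| = 1$, say $F = \{g\}$, we have $F \cdot \alpha = g \cdot \alpha$, and since $G$ acts by measure preserving bijections $\sH(g \cdot \alpha) = \sH(\alpha)$; as $R_S(F) = 0$ and $\zeta(0) = 0$, this is the desired identity. For the inductive step with $|F| \geq 2$, I would choose a leaf $v$ of the subtree induced by $F$ (a finite tree with at least two vertices has a leaf), set $F' = F \setminus \{v\}$, and let $w \in F'$ be the unique vertex of $F$ adjacent to $v$, so $v = w t$ for some $t \in S \cup S^{-1}$. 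Removing a leaf preserves connectedness, so $F'$ is again finite and right $S$-connected, and the inductive hypothesis gives $\sH(F' \cdot \alpha) = \sH(\alpha) + \zeta(R_S(F'))$.

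By clause (i) of Lemma \ref{LEM SHAN}, applied to the partitions $v \cdot \alpha$ and $F' \cdot \alpha$ whose join is $F \cdot \alpha$, we split
$$\sH(F \cdot \alpha) = \sH(v \cdot \alpha / F' \cdot \alpha) + \sH(F' \cdot \alpha).$$
Let $s \in S$ be the generator with $t \in \{s, s^{-1}\}$. The edge joining $w$ and $v$ contributes exactly one additional $s$-edge to $F$ relative to $F'$ (the pair $(w, v)$ if $t = s$, the pair $(v, w)$ if $t = s^{-1}$), so $R_S(F) = R_S(F') + s$. Using the linearity of $\zeta$ and the inductive hypothesis, it therefore remains to prove the single-edge identity $\sH(v \cdot \alpha / F' \cdot \alpha) = \sH(s \cdot \alpha / \alpha) = \zeta(s)$.

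The heart of the argument, and the step I expect to be the main obstacle, is extracting this single-edge identity from the Markov property, since the defining Markov condition is stated only at the edges incident to $1_G$. The idea is to transport it to the edge $(w, v)$. Left multiplication by $w$ is an automorphism of the right $S$-Cayley graph sending $1_G$ to $w$ and $t$ to $v = wt$, whence $w \cdot \RP_S(1_G, t) = \RP_S(w, v)$; as the $G$-action is measure preserving, translating the reformulated Markov condition of Lemma \ref{LEM DEFN} by $w$ yields
$$\sH(v \cdot \alpha / \RP_S(w, v) \cdot \alpha) = \sH(v \cdot \alpha / w \cdot \alpha).$$
Because $v$ is a leaf, every element of $F'$ lies on the $w$-side of the edge $(w, v)$, so $w \in F' \subseteq \RP_S(w, v)$, and hence the $\sigma$-algebra generated by $w \cdot \alpha$ is contained in the one generated by $F' \cdot \alpha$, which is in turn contained in $\RP_S(w, v) \cdot \alpha$. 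Monotonicity of conditional Shannon entropy (clause (iii) of Lemma \ref{LEM SHAN}) then sandwiches $\sH(v \cdot \alpha / F' \cdot \alpha)$ between $\sH(v \cdot \alpha / \RP_S(w, v) \cdot \alpha)$ and $\sH(v \cdot \alpha / w \cdot \alpha)$, which the displayed equality forces to coincide; thus $\sH(v \cdot \alpha / F' \cdot \alpha) = \sH(v \cdot \alpha / w \cdot \alpha)$.

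To finish, translating by $w^{-1}$ gives $\sH(v \cdot \alpha / w \cdot \alpha) = \sH(t \cdot \alpha / \alpha)$, and a short translation-invariance computation ($\sH(s^{-1} \cdot \alpha \vee \alpha) = \sH(\alpha \vee s \cdot \alpha)$) shows $\sH(s^{-1} \cdot \alpha / \alpha) = \sH(s \cdot \alpha / \alpha)$, so this common value equals $\zeta(s)$ whether $t = s$ or $t = s^{-1}$. Substituting back into the split from the second paragraph and invoking linearity of $\zeta$ yields $\sH(F \cdot \alpha) = \sH(\alpha) + \zeta(R_S(F') + s) = \sH(\alpha) + \zeta(R_S(F))$, completing the induction.
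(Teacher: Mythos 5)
Your proposal is correct and follows essentially the same route as the paper: the paper deletes a vertex of maximal $S$-word length (which is precisely a leaf of the induced subtree, as in your argument), splits off $\sH(f \cdot \alpha / F' \cdot \alpha)$ with clause (i) of Lemma \ref{LEM SHAN}, and obtains the single-edge identity by the same monotonicity sandwich with clause (iii) and Lemma \ref{LEM DEFN} — translating the sets back to the identity via $f_0^{-1} F' \subseteq \RP_S(1_G, t)$ rather than, as you do, translating the Markov condition out to the edge $(w, v)$, which is the same step up to measure-preserving conjugation by $w$. The remaining bookkeeping ($R_S(F) = R_S(F') + s$ and $\sH(s^{-1} \cdot \alpha / \alpha) = \sH(s \cdot \alpha / \alpha)$) also matches the paper's proof.
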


\begin{proof}
We first point out that by clause (i) of Lemma \ref{LEM SHAN}
$$\sH(s^{-1} \cdot \alpha / \alpha) = \sH(s^{-1} \cdot \alpha \vee \alpha) - \sH(\alpha) = \sH(\alpha \vee s \cdot \alpha) - \sH(\alpha) = \sH(s \cdot \alpha / \alpha),$$
where the second equality is due to the action of $G$ being measure preserving.

Now we proceed to prove the lemma. We use induction on the cardinality of $F$. If $|F| = 1$ and $F = \{f\}$, then $R_S(F) = 0$ and since $G \acts (X, \mu)$ is measure preserving we have
$$\sH(F \cdot \alpha) = \sH(f \cdot \alpha) = \sH(\alpha) = \sH(\alpha) + \zeta(R_S(F)).$$
Now suppose this property holds whenever $|F| \leq q$. Let $F$ be a finite right $S$-connected set with $|F| = q + 1$. Let $f \in F$ be an element with maximum $S$-word length, and set $F' = F \setminus \{f\}$. Then $F'$ is right $S$-connected. Let $t \in S \cup S^{-1}$ be such that $f \in F' t$. Set $f_0 = f t^{-1}$. Then by our choice of $f$ we have that $F' \subseteq \RP_S(f_0, f)$ and hence $f_0^{-1} F' \subseteq \RP_S(1_G, f_0^{-1} f) = \RP_S(1_G, t)$. So it follows from Lemma \ref{LEM DEFN} and clause (iii) of Lemma \ref{LEM SHAN} that
$$\sH(t \cdot \alpha / \alpha) = \sH(t \cdot \alpha / \RP_S(1_G, t) \cdot \alpha)$$
$$\leq \sH(t \cdot \alpha / f_0^{-1} F' \cdot \alpha) \leq \sH(t \cdot \alpha / \alpha).$$
Thus equality holds throughout. It follows that
$$\sH(f \cdot \alpha / F' \cdot \alpha) = \sH(f_0 t \cdot \alpha / F' \cdot \alpha) = \sH(t \cdot \alpha / f_0^{-1} F' \cdot \alpha) = \sH(t \cdot \alpha / \alpha).$$

Let $i \in \{-1, 1\}$ be such that $t^i \in S$. So we have $R_S(F) = R_S(F') + t^i$. By clause (i) of Lemma \ref{LEM SHAN} and the inductive hypothesis we have
$$\sH(F \cdot \alpha) = \sH(F' \cdot \alpha \vee f \cdot \alpha) = \sH(F' \cdot \alpha) + \sH(f \cdot \alpha / F' \cdot \alpha)$$
$$= \sH(\alpha) + \zeta(R_S(F')) + \sH(t \cdot \alpha / \alpha) = \sH(\alpha) + \zeta(R_S(F')) + \sH(t^i \cdot \alpha / \alpha)$$
$$= \sH(\alpha) + \zeta(R_S(F')) + \zeta(t^i) = \sH(\alpha) + \zeta(R_S(F)).$$
Induction now completes the proof.
\end{proof}

Just as Shannon entropies simplify for Markov processes, so does the formula for f-invariant entropy. In fact within the context of finitely generated free groups and generating partitions with finite Shannon entropy, this provides yet another characterization of Markov processes.

\begin{thm}[Bowen, \cite{B10d}] \label{THM MARKFINV}
Let $G$ be a finitely generated free group, let $S$ be a free generating set for $G$, let $G$ act on $(X, \mu)$, and let $\alpha$ be a countable measurable partition of $X$. Assume that $\alpha$ is generating and has finite Shannon entropy. Then $X$ is a $(S, \alpha)$-Markov process if and only if
$$f_G(X, \mu) = F_G(X, \mu, S, \alpha) = (1 - 2 r) \sH(\alpha) + \sum_{s \in S} \sH(s \cdot \alpha \vee \alpha),$$
where $r$ is the rank of $G$.
\end{thm}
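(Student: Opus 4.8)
The plan is to prove the two implications separately, using Lemma \ref{LEM SHANEDGE} for the direction ``Markov $\Rightarrow$ formula'' and the monotonicity Lemma \ref{LEM FDECR} together with the equality clause of Lemma \ref{LEM SHAN}(iii) for the reverse direction.

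First I would treat the forward direction. Recall that $f_G(X, \mu) = \lim_{n} F_G(X, \mu, S, B_S(n) \cdot \alpha)$, and write out
$$F_G(X, \mu, S, B_S(n) \cdot \alpha) = (1 - 2r)\, \sH(B_S(n) \cdot \alpha) + \sum_{s \in S} \sH\bigl((s B_S(n) \cup B_S(n)) \cdot \alpha\bigr),$$
using $s \cdot (B_S(n) \cdot \alpha) = (s B_S(n)) \cdot \alpha$. For $n \geq 1$ both $B_S(n)$ and $s B_S(n) \cup B_S(n)$ are finite and right $S$-connected: left translation is an automorphism of the right $S$-Cayley graph, so $s B_S(n)$ is right $S$-connected, it shares the vertex $s$ with $B_S(n)$, and in a tree the union of two right $S$-connected sets meeting in a vertex is again right $S$-connected. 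Hence Lemma \ref{LEM SHANEDGE} applies to each entropy appearing above, replacing it by $\sH(\alpha) + \zeta(R_S(\cdot))$. Collecting the $\sH(\alpha)$ coefficients and using linearity of $\zeta$ reduces the desired equality $F_G(X, \mu, S, B_S(n) \cdot \alpha) = F_G(X, \mu, S, \alpha)$ to the purely combinatorial edge-counting identity
$$(1 - 2r)\, R_S(B_S(n)) + \sum_{s \in S} R_S\bigl(s B_S(n) \cup B_S(n)\bigr) = \sum_{s \in S} s$$
in the free abelian group $\bigoplus_{s \in S} \Z \cdot s$. Once this holds the sequence is constant, so its limit $f_G(X, \mu)$ equals $F_G(X, \mu, S, \alpha)$.

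This identity is where I expect the main difficulty. It is an edge-weighted refinement of the scalar identity $1 = (1 - 2r)|K| + \sum_{s \in S} |sK \cup K|$ quoted in Section \ref{SEC FINV}, but now one must track, \emph{separately for each generator} $t \in S$, the number of right $t$-edges, whereas the sets $s B_S(n)$ arise by \emph{left} translation. This is precisely the left/right tension emphasized in Section \ref{SEC DEFN}, and a naive comparison of the coefficient of a fixed $t$ fails for non-symmetric sets. I would exploit the fact that the number of right $t$-edges of a set $F$ equals $\sum_g \chi_F(g)\chi_F(gt)$, and prove the identity by induction on $n$: passing from $B_S(n)$ to $B_S(n+1)$ attaches the sphere of radius $n+1$, and I would show the net contribution of each newly attached vertex to the left-hand side cancels, using the invariance of the ball under inversion and under the generator-permuting automorphisms of $G$ to organize the bookkeeping.

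For the reverse direction I would use that, by Lemma \ref{LEM FDECR}, the numbers $F_G(X, \mu, S, B_S(n) \cdot \alpha)$ are non-increasing in $n$ with limit $f_G(X, \mu)$, while the $n = 0$ term is $F_G(X, \mu, S, \alpha)$ since $B_S(0) = \{1_G\}$. The hypothesis $f_G(X, \mu) = F_G(X, \mu, S, \alpha)$ thus forces the limit of a non-increasing sequence to equal its first term, so the sequence is constant. I would then reread the proof of Lemma \ref{LEM FDECR} as a telescoping in which each one-vertex extension contributes a difference of the form $\sH(t \cdot \alpha / \alpha) - \sH(t \cdot \alpha / W \cdot \alpha) \geq 0$, where $t \in S \cup S^{-1}$ and $W$ is a finite subset of $\RP_S(1_G, t)$ containing $1_G$; constancy forces every such difference to vanish. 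The equality clause of Lemma \ref{LEM SHAN}(iii) then gives $\sH(t \cdot \alpha / W \cdot \alpha) = \sH(t \cdot \alpha / \alpha)$ for all such $W$, and letting $W$ exhaust $\RP_S(1_G, t)$ while applying Lemma \ref{LEM SHAN}(iv) yields $\sH(t \cdot \alpha / \RP_S(1_G, t) \cdot \alpha) = \sH(t \cdot \alpha / \alpha)$ for every $t \in S \cup S^{-1}$, which by Lemma \ref{LEM DEFN} is exactly the assertion that $X$ is a $(S, \alpha)$-Markov process. The delicate point is the bookkeeping: one must verify that the telescoping decomposition of $F_G$ isolates a single conditional-independence statement per added vertex and that, as the balls grow, the relevant conditioning sets $W$ genuinely exhaust the full past $\RP_S(1_G, t)$ for each generator.
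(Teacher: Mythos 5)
The paper gives no proof of this theorem to compare against (it is quoted from Bowen \cite{B10d}), so your proposal has to stand on its own. Your forward direction does: the reduction via Lemma \ref{LEM SHANEDGE} is correct (both $B_S(n)$ and $s B_S(n) \cup B_S(n)$ are right $S$-connected, as you check), and the combinatorial identity you isolate is true. But you do not need a fresh induction on $n$ for it: $B_S(n)$ is bi-$S$-connected and contains $1_G$, so Equation \ref{EQ2} from the proof of Lemma \ref{LEM COMB} (a purely combinatorial statement, so no circularity in invoking it) says $\sum_{s \in S} \left( R_S(s B_S(n) \cup B_S(n)) - R_S(B_S(n)) \right) = \left( \sum_{s \in S} s \right) + (r-1) R_S(B_S(n))$, and adding $(1-r) R_S(B_S(n))$ to both sides gives exactly $(1-2r) R_S(B_S(n)) + \sum_{s \in S} R_S(s B_S(n) \cup B_S(n)) = \sum_{s \in S} s$. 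With Lemma \ref{LEM SHANEDGE} this makes the sequence constant and the forward implication is complete.

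The genuine gap is in the reverse direction, and it is not mere bookkeeping. First, you propose to ``reread the proof of Lemma \ref{LEM FDECR},'' but that proof is not in the paper (it is in \cite{B10a}); you would have to establish the telescoping decomposition yourself. Second, and more seriously, the decomposition does not have the form you assert. If you add a vertex $g$ (with first letter $t$, last letter $s'$, and right-parent $g' = g (s')^{-1}$) to a set $U$ with $B_S(n) \subseteq U \subseteq B_S(n+1)$, the difference $F_G(X,\mu,S,U \cdot \alpha) - F_G(X,\mu,S,(U \cup \{g\}) \cdot \alpha)$ does decompose into $2r-1$ non-negative terms, but each has the shape $\sH(g \cdot \alpha / U \cdot \alpha) - \sH(g \cdot \alpha / U' \cdot \alpha)$ with $U \subsetneq U'$ (for instance $U' = U \cup sU$ for $s \neq t$, or $U \cup t^{-1}U$, or $U \cup s^{-1}U \cup \{s^{-1}g\}$). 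Translating by $(g')^{-1}$ puts each term in the form $\sH(s' \cdot \alpha / W_1 \cdot \alpha) - \sH(s' \cdot \alpha / W_2 \cdot \alpha)$ with $1_G \in W_1 \subseteq W_2 \subseteq \RP_S(1_G, s')$, but the base $W_1 = (g')^{-1}U$ contains the translated ball $(g')^{-1}B_S(n)$; it equals $\{1_G\}$ only for the very first vertex attached to $B_S(0)$. So constancy of the sequence kills only these \emph{relative} differences $\sH(s' \cdot \alpha / W_1 \cdot \alpha) = \sH(s' \cdot \alpha / W_2 \cdot \alpha)$; it does not directly yield the \emph{absolute} equalities $\sH(t \cdot \alpha / W \cdot \alpha) = \sH(t \cdot \alpha / \alpha)$ that your appeal to clauses (iii) and (iv) of Lemma \ref{LEM SHAN} and to Lemma \ref{LEM DEFN} requires. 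To close this you must chain the vanishing defects across different added vertices and different base points (for example, rewrite each defect as a conditional mutual information, use its symmetry, and sandwich with clause (iii) of Lemma \ref{LEM SHAN}), and then prove that the conditioning sets reached this way exhaust $\RP_S(1_G, t)$ for each $t \in S \cup S^{-1}$. That chaining is the mathematical heart of the converse, it is absent from the proposal, and it is exactly where the left/right tension bites: Lemma \ref{LEM FDECR} forces the growth process to be left-$S$-connected, while the pasts you must reach are right-$S$-objects. The strategy is salvageable, but as written the converse does not go through.
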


We now prove an important lemma which will significantly simplify some of our later proofs. The lemma below is also quite pleasing as it affirms the truth of something which one would intuitively expect. The usefulness of this lemma should extend beyond our work here.

\begin{lem} \label{LEM INTUIT}
Let $G$ be a free group acting on a probability space $(X, \mu)$. Assume that $X$ is a $(S, \alpha)$-Markov process where $\sH(\alpha) < \infty$. Let $U, V, W \subseteq G$ with $U$ finite. If $V$ right $S$-separates $(U, W)$ then
$$\sH(U \cdot \alpha / (W \cup V) \cdot \alpha) = \sH(U \cdot \alpha / V \cdot \alpha).$$
\end{lem}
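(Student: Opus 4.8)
The plan is to reduce the statement to a \emph{global Markov property on a finite tree} and then prove that property by removing leaves one at a time, using the defining Markov condition as the sole input.

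First I would reduce to the case that $U$, $V$, and $W$ are all finite. Since $\sH(U \cdot \alpha) \leq |U| \cdot \sH(\alpha) < \infty$, I write $W$ as an increasing union of finite sets $W_m$ and apply clause (iv) of Lemma \ref{LEM SHAN} (feeding in first the blocks of $V \cdot \alpha$ and then the $w \cdot \alpha$ one at a time) to get $\sH(U \cdot \alpha / (W \cup V) \cdot \alpha) = \lim_m \sH(U \cdot \alpha / (W_m \cup V) \cdot \alpha)$. As $V$ right $S$-separates $(U, W_m)$ for every $m$, it suffices to treat finite $W$. With $W$ finite, the set $V_0$ of elements of $V$ lying on some right $S$-path from a point of $U$ to a point of $W$ is finite and still right $S$-separates $(U,W)$; exhausting $V$ by finite sets each containing $V_0$ and applying clause (iv) to both conditional entropies then reduces $V$ to a finite set as well. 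Finally, because $\sH(U \cdot \alpha) < \infty$, the converse direction of clause (iii) of Lemma \ref{LEM SHAN} lets me replace the desired equality of conditional entropies by the equivalent conditional-independence statement $\cex(\chi_A / (W \cup V) \cdot \alpha)(x) = \cex(\chi_A / V \cdot \alpha)(x)$ for $\mu$-almost every $x$ and every $A \in U \cdot \alpha$.

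The engine is a translated form of the defining Markov property. Applying the measure preserving map $x \mapsto g \cdot x$ (under which conditional expectations are equivariant) to the identity in the definition of a Markov process, and using that left multiplication $h \mapsto gh$ is an automorphism of the right $S$-Cayley graph carrying $\RP_S(1_G, s)$ onto $\RP_S(g, gs)$, shows that for every $g \in G$ and $s \in S \cup S^{-1}$ the partition $gs \cdot \alpha$ is conditionally independent of $\RP_S(g, gs) \cdot \alpha$ given $g \cdot \alpha$. In particular, if $\ell$ is a leaf of the finite subtree $T$ spanned by $U \cup V \cup W$ and $p$ is its unique neighbor in $T$, then every remaining vertex of $U \cup V \cup W$ lies in $\RP_S(p, \ell)$, so $\ell \cdot \alpha$ is conditionally independent of $\big(T \setminus \{\ell\}\big) \cdot \alpha$ given $p \cdot \alpha$. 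I would then prove the finite statement by induction on $|T|$, removing such a leaf $\ell$: if $\ell \notin U \cup W$ it lies on no $U$--$W$ path and can be discarded (and, when $\ell \in V$, reinstated into the conditioning using its conditional independence from the rest); if $\ell \in W$ (the case $\ell \in U$ being symmetric) I split off $\ell$, apply the inductive hypothesis to $W \setminus \{\ell\}$, and recombine using the chain rule of clause (ii) of Lemma \ref{LEM SHAN}.

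The hard part will be exactly this recombination. The local Markov property only decouples $\ell$ from the rest \emph{across its neighbor} $p$, whereas the separating set $V$ may sit several edges away from $\ell$; bridging this gap forces me to chain the elementary conditional independences along the entire right $S$-path joining $\ell$ to $V$, and to exploit the asymmetry that the conditioning may be freely enlarged by vertices on the far side of $V$ but \emph{not} by vertices strictly between $\ell$ and $V$. I expect the genuine difficulty to be organizing this bookkeeping of nested conditional independences — including the possibility that the paths from $\ell$ to the various points of $U$ branch before reaching $V$ — rather than any single analytic estimate.
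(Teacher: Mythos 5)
Your preparatory steps are all sound and match the paper's toolkit: the reduction to finite $U, V, W$ via clause (iv) of Lemma \ref{LEM SHAN}, the reformulation of the entropy identity as conditional independence via the converse direction of clause (iii), and the translated local Markov property (for adjacent $g$ and $gs$, the partition $gs \cdot \alpha$ is conditionally independent of $\RP_S(g, gs) \cdot \alpha$ given $g \cdot \alpha$) are exactly the engine the paper runs on. The gap is the inductive step, and it is not bookkeeping; the recombination you defer is the whole content of the lemma. When you delete a leaf $\ell \in W$ of the Steiner tree $T$ and apply the inductive hypothesis to $W' = W \setminus \{\ell\}$, closing the induction requires
$$\sH\left(U \cdot \alpha \, / \, \ell \cdot \alpha \vee (W' \cup V) \cdot \alpha\right) = \sH\left(U \cdot \alpha \, / \, (W' \cup V) \cdot \alpha\right),$$
which is itself an instance of Lemma \ref{LEM INTUIT} (with $W$ replaced by $\{\ell\}$ and $V$ replaced by $W' \cup V$) whose Steiner tree is the \emph{same} tree $T$; so an induction on $|T|$ can never be applied to it, and clause (ii) of Lemma \ref{LEM SHAN} only converts it into the symmetric, equally hard statement about $\sH(\ell \cdot \alpha / \cdot)$. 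The local Markov property decouples $\ell$ only across its neighbor $p$, and $p$ is in general a branch point of $T$ lying in none of $U$, $V$, $W$, so it is not available in any conditioning set. The same circularity defeats the treatment of a leaf $\ell \in V$: conditional independence of $U \cdot \alpha$ and $W \cdot \alpha$ given $(V \setminus \{\ell\}) \cdot \alpha$ does not imply it given $V \cdot \alpha$ (conditioning on additional information can create dependence; indeed adding $\ell$ can genuinely lower both conditional entropies, and what needs proving is that it lowers them by the same amount), and your principle that ``the conditioning may be freely enlarged by vertices on the far side of $V$'' is precisely the statement of the lemma you are trying to prove.

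What is missing is a device guaranteeing that the decoupling vertex already lies inside the conditioning set at every step of the induction. That is how the paper proceeds: for a singleton $u$ it restricts attention to sets $W$ that are filled in toward the set $V_0 \subseteq V$ of $\preceq$-maximal separating points (each $W \cap \RP_S(v_i, u)$ right $S$-connected and containing $v_i$), and deletes the point $w \in W$ right $S$-furthest from $v_i$; connectivity then forces the neighbor $z$ of $w$ to remain in $W' \cup V_0$, so the Markov property at the edge $(z, w)$ applies verbatim and the chain rule closes the induction with no gap to bridge. An arbitrary finite pair $(V, W)$ --- where $W$ need not touch $V$ at all --- is then handled not by induction but by a sandwich: enlarge $W \cup V$ to a filled-in set $W''$ for which equality is known and use monotonicity (clause (iii)) twice, obtaining
$$\sH(u \cdot \alpha / V_0 \cdot \alpha) = \sH(u \cdot \alpha / (W'' \cup V_0) \cdot \alpha) \leq \sH(u \cdot \alpha / (W \cup V) \cdot \alpha) \leq \sH(u \cdot \alpha / V \cdot \alpha) \leq \sH(u \cdot \alpha / V_0 \cdot \alpha).$$
If you want to keep your leaf-deletion scheme, replace the induction on $|T|$ by induction on $|W|$ and prove the base case $|W| = 1$ by this grow-and-sandwich argument; without that (or an equivalent strengthening of the inductive hypothesis) the proposal does not close.
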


\begin{proof}
First suppose that $U = \{u\}$ is a singleton and that $V$ is finite. Partially order $V$ so that $v_1 \preceq v_2$ if and only if the unique right $S$-path from $v_1$ to $u$ traverses $v_2$, or equivalently $v_1 \preceq v_2$ if and only if $\RP_S(v_1, u) \subseteq \RP_S(v_2, u)$. Since $V$ is finite, there are a finite number of $\preceq$-maximal elements of $V$. Say the $\preceq$-maximal elements are $v_1, v_2, \ldots, v_n$. Set $V_0 = \{v_1, v_2, \ldots, v_n\}$. Then we have
$$V \subseteq \RP_S(V, u) = \RP_S(V_0, u).$$
We claim that if $W \subseteq \RP_S(V_0, u)$ is finite and for each $1 \leq i \leq n$ the set $W \cap \RP_S(v_i, u)$ is right $S$-connected and contains $v_i$ then
$$\sH(u \cdot \alpha / (W \cup V_0) \cdot \alpha) = \sH(u \cdot \alpha / V_0 \cdot \alpha).$$
We prove this claim by induction on the cardinality of $W$. Notice that these conditions imply that $V_0 \subseteq W$. If $|W| = |V_0|$ then $W = V_0$ and the claim is clear. Now suppose the claim holds whenever $|V_0| \leq |W| \leq q$. Let $V_0 \subseteq W \subseteq \RP_S(V_0, u)$ be such that $|W| = q + 1$ and and such that for each $1 \leq i \leq n$ the set $W \cap \RP_S(v_i, u)$ is right $S$-connected and contains $v_i$. Pick $1 \leq i \leq n$ with $|W \cap \RP_S(v_i, u)| \geq 2$. Let $w \in W \cap \RP_S(v_i, u)$ be right $S$-furthest from $v_i$. Set $W' = W \setminus \{w\}$. Since $w, v_i \in W \cap \RP_S(v_i, u)$ and $W \cap \RP_S(v_i, u)$ is right $S$-connected, there must be $z \in W'$ which is right $S$-adjacent to $w$. Since we chose $w$ to be right $S$-furthest from $v_i$ we have
$$z \in W' \cup V_0 \subseteq \{u\} \cup W' \cup V_0 \subseteq \RP_S(z, w).$$
Therefore
$$\alpha \leq z^{-1} (W' \cup V_0) \cdot \alpha \leq z^{-1} u \cdot \alpha \vee z^{-1}(W' \cup V_0) \cdot \alpha \leq \RP_S(1_G, z^{-1} w) \cdot \alpha.$$
As $z^{-1} w \in S \cup S^{-1}$ and $X$ is a $(S, \alpha)$-Markov process, by clause (iii) of Lemma \ref{LEM SHAN} we have that
$$\sH(z^{-1} w \cdot \alpha / \alpha) = \sH(z^{-1} w \cdot \alpha / \RP_S(1_G, z^{-1} w) \cdot \alpha)$$
$$\leq \sH(z^{-1} w \cdot \alpha / z^{-1} u \cdot \alpha \vee z^{-1}(W' \cup V_0) \cdot \alpha) \leq \sH(z^{-1} w \cdot \alpha / z^{-1}(W' \cup V_0) \cdot \alpha)$$
$$\leq \sH(z^{-1} w \cdot \alpha / \alpha).$$
So equality holds throughout. It follows that
$$\sH(w \cdot \alpha / (W' \cup V_0) \cdot \alpha) = \sH(z^{-1} w \cdot \alpha / z^{-1}(W' \cup V_0) \cdot \alpha)$$
$$= \sH(z^{-1} w \cdot \alpha / z^{-1} u \cdot \alpha \vee z^{-1}(W' \cup V_0) \cdot \alpha) = \sH(w \cdot \alpha / u \cdot \alpha \vee (W' \cup V_0) \cdot \alpha).$$
By clause (ii) of Lemma \ref{LEM SHAN} and the inductive hypothesis we have
$$\sH(u \cdot \alpha / (W \cup V_0) \cdot \alpha) = \sH(u \cdot \alpha / w \cdot \alpha \vee (W' \cup V_0) \cdot \alpha)$$
$$= \sH(w \cdot \alpha / u \cdot \alpha \vee (W' \cup V_0) \cdot \alpha) + \sH(u \cdot \alpha / (W' \cup V_0) \cdot \alpha) - \sH(w \cdot \alpha / (W' \cup V_0) \cdot \alpha)$$
$$= \sH(u \cdot \alpha / (W' \cup V_0) \cdot \alpha) = \sH(u \cdot \alpha / V_0 \cdot \alpha).$$
So by induction we have that
$$\sH(u \cdot \alpha / (W \cup V_0) \cdot \alpha) = \sH(u \cdot \alpha / V_0 \cdot \alpha)$$
whenever $W$ is finite, $V_0$ right $S$-separates $(u, W)$, and for each $1 \leq i \leq n$ the set $W \cap \RP_S(v_i, u)$ is right $S$-connected and contains $v_i$.

Now suppose that $W$ is finite and that $V$ right $S$-separates $(u, W)$, where $V$ and $u$ are the same as in the previous paragraph. Then there is a finite set $W'$ such that $W \cup V \subseteq W'$, $V_0$ right $S$-separates $(u, W')$, and for each $1 \leq i \leq n$ the set $W' \cap \RP_S(v_i, u)$ is right $S$-connected and contains $v_i$. It follows from the previous paragraph that
$$\sH(u \cdot \alpha / V_0 \cdot \alpha) = \sH(u \cdot \alpha / (W' \cup V_0) \cdot \alpha) \leq \sH(u \cdot \alpha / (W \cup V) \cdot \alpha)$$
$$\leq \sH(u \cdot \alpha / V \cdot \alpha) \leq \sH(u \cdot \alpha / V_0 \cdot \alpha).$$
So equality holds throughout and
$$\sH(u \cdot \alpha / (W \cup V) \cdot \alpha) = \sH(u \cdot \alpha / V \cdot \alpha).$$
We conclude that for any $u \in G$ and any two finite sets $V, W \subseteq G$ with $V$ right $S$-separating $(u, W)$ we have
$$\sH(u \cdot \alpha / (W \cup V) \cdot \alpha) = \sH(u \cdot \alpha / V \cdot \alpha).$$

Now let $u \in G$ and $V, W \subseteq G$ be such that $V$ right $S$-separates $(u, W)$. We allow $V$ and $W$ to be infinite. Let $(W_n)_{n \in \N}$ be an increasing sequence of finite subsets of $W$ with $\bigcup_{n \in \N} W_n = W$. For each $n$ let $V_n \subseteq V$ be a finite set such that $V_n$ right $S$-separates $(u, W_n)$. By enlarging the $V_n$'s if necessary, we may suppose that they are increasing and union to $V$. So by clause (iv) of Lemma \ref{LEM SHAN} we have
$$\sH(u \cdot \alpha / (W \cup V) \cdot \alpha) = \lim_{n \rightarrow \infty} \sH(u \cdot \alpha / (W_n \cup V_n) \cdot \alpha) = \lim_{n \rightarrow \infty} \sH(u \cdot \alpha / V_n \cdot \alpha) = \sH(u \cdot \alpha / V \cdot \alpha).$$

Now let $U, V, W \subseteq G$ be such that $U$ is finite and $V$ right $S$-separates $(U, W)$. Enumerate $U$ as $U = \{u_1, u_2, \ldots, u_n\}$. Notice that $V_i = V \cup \{u_1, u_2, \ldots, u_{i-1}\}$ right $S$-separates $(u_i, W)$ for each $1 \leq i \leq n$. Clause (ii) of Lemma \ref{LEM SHAN} together with the previous paragraph give
$$\sH(U \cdot \alpha / (W \cup V) \cdot \alpha) = \sum_{i = 1}^n \sH(u_i \cdot \alpha / (W \cup V_i) \cdot \alpha) = \sum_{i = 1}^n \sH(u_i \cdot \alpha / V_i \cdot \alpha) = \sH(U \cdot \alpha / V \cdot \alpha).$$
This completes the proof.
\end{proof}

In order to prove that $f_H(X, \mu) = |G : H| \cdot f_G(X, \mu)$ for Markov processes $G \acts (X, \mu)$, we will find it convenient to work with a single partition $\beta$ which is generating for both $G \acts (X, \mu)$ and $H \acts (X, \mu)$. We will also want $\beta$ to be a Markov partition for $G \acts (X, \mu)$. We therefore need to know how much flexibility there is in choosing Markov partitions. This is addressed by the following lemma due to Bowen.

\begin{lem}[Bowen, \cite{B10d}] \label{LEM BIGPART}
Let $G$ be a free group acting on a probability space $(X, \mu)$. Suppose that $X$ is a $(S, \alpha)$-Markov process with $\sH(\alpha) < \infty$. Then $X$ is a $(S, \Delta \cdot \alpha)$-Markov process for every finite left $S$-connected set $\Delta \subseteq G$ containing the identity.
\end{lem}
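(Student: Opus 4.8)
The plan is to verify directly that $\beta := \Delta \cdot \alpha$ satisfies the Markov criterion of Lemma \ref{LEM DEFN}, and to reduce the whole statement to a single geometric separation fact that can then be fed into Lemma \ref{LEM INTUIT}. First I would dispose of the easy structural properties of $\beta$. Since $\Delta$ is finite and $\sH(\alpha) < \infty$, subadditivity of Shannon entropy gives $\sH(\beta) \leq |\Delta| \cdot \sH(\alpha) < \infty$. Since $1_G \in \Delta$ we have $\alpha \leq \beta$, so the $G$-invariant $\sigma$-algebra generated by $\beta$ contains the one generated by $\alpha$; as $\alpha$ is generating, so is $\beta$. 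By Lemma \ref{LEM DEFN} it then suffices to prove, for every $s \in S \cup S^{-1}$, that $\sH(s \cdot \beta / \RP_S(1_G, s) \cdot \beta) = \sH(s \cdot \beta / \beta)$.

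Next I would rewrite these conditional entropies in terms of the original partition $\alpha$. Using $g \cdot (\Delta \cdot \alpha) = (g\Delta) \cdot \alpha$ and $F \cdot (\Delta \cdot \alpha) = (F\Delta) \cdot \alpha$, the required identity becomes $\sH((s\Delta) \cdot \alpha / (\RP_S(1_G,s)\Delta) \cdot \alpha) = \sH((s\Delta) \cdot \alpha / \Delta \cdot \alpha)$, where $s\Delta$, $\Delta$, and the product set $\RP_S(1_G, s)\Delta$ are now subsets of $G$. Because $1_G \in \RP_S(1_G, s)$ we have $\Delta \subseteq \RP_S(1_G, s)\Delta$, so this is precisely the conclusion of Lemma \ref{LEM INTUIT} applied with the finite set $U = s\Delta$, with $V = \Delta$, and with $W = \RP_S(1_G,s)\Delta$ — provided that $\Delta$ right $S$-separates $(s\Delta, \RP_S(1_G, s)\Delta)$. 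Thus the entire lemma collapses to this one separation statement, which is the heart of the argument and the step I expect to require genuine care.

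To prove the separation I would use the observation that a left $S$-connected set containing $1_G$ is \emph{suffix-closed}: by the definition of left $S$-connectedness, for every $d \in \Delta$ the whole left $S$-path from $d$ to $1_G$ lies in $\Delta$, and this path is obtained by successively deleting the leftmost letter of the reduced word of $d$. Rooting the right $S$-Cayley tree at $1_G$ — so that the right $S$-path between $x$ and $y$ runs from $x$ up to their longest common prefix $m$ and back down to $y$, passing only through prefixes of $x$ and of $y$ — I would then fix $u = s d_u \in s\Delta$ and $w = g d_w$ with $g \in \RP_S(1_G, s)$ and $d_u, d_w \in \Delta$, and show the path from $u$ to $w$ meets $\Delta$ by a short case split. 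If $d_u$ begins with $s^{-1}$, then $u = s d_u$ is exactly the result of deleting the leftmost letter of $d_u$, so $u \in \Delta$ by suffix-closedness. Otherwise $u$ begins with $s$; since $g$ does not begin with $s$, either $w$ does not begin with $s$, forcing $m = 1_G \in \Delta$, or $w$ begins with $s$, which can occur only through total cancellation of $g$ against a prefix of $d_w$, exhibiting $w$ as a suffix of $d_w$ and hence placing $w \in \Delta$ again by suffix-closedness. In every case an endpoint of the path, or its common prefix $m = 1_G$, lies in $\Delta$, establishing the separation and completing the proof. The main obstacle, and the only place requiring vigilance, is keeping the left/right conventions straight: the Markov condition and the separation are phrased through right $S$-pasts, whereas the hypothesis on $\Delta$ is left $S$-connectedness, and it is exactly the implication ``left $S$-connected with $1_G \in \Delta$ $\Rightarrow$ suffix-closed'' that lets me control the cancellation of $s$ and of $g$ on the left.
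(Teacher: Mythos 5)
Your proposal is correct and takes essentially the same route as the paper's proof: both reduce via Lemma \ref{LEM DEFN} to the single separation claim that $\Delta$ right $S$-separates $(s\Delta, \RP_S(1_G,s)\Delta)$, prove that claim using the fact that a left $S$-connected set containing $1_G$ is closed under deleting initial segments of reduced words (your ``suffix-closedness''), and then conclude with Lemma \ref{LEM INTUIT}, noting $\Delta \subseteq \RP_S(1_G,s)\Delta$. Your case analysis on $u$ and $w$ is just an unpacked version of the paper's two inclusions $s\Delta \subseteq \RP_S(s,1_G) \cup \Delta$ and $\RP_S(1_G,s)\Delta \subseteq \RP_S(1_G,s) \cup \Delta$.
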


The lemma states that it is sufficient for $\Delta$ to be left $S$-connected. We remark that in general it is necessary that $\Delta$ be left $S$-connected. Consider a Bernoulli shift $(K^G, \mu^G)$ and let $\alpha$ be the canonical partition. Then $K^G$ is a $(S, \alpha)$-Markov process. If $\Delta \subseteq G$ is not left $S$-connected, then one can use Theorem \ref{THM MARKFINV} to show that $K^G$ is not a $(S, \Delta \cdot \alpha)$-Markov process.

This lemma plays a crucial role in our main theorem, and so as a convenience to the reader we include a proof below. We remark that this proof is simpler and more intuitive than the proof in \cite{B10d} as here we rely on Lemma \ref{LEM INTUIT}.

\begin{proof}
Set $\beta = \Delta \cdot \alpha$. By Lemma \ref{LEM DEFN} it suffices to show that
$$\sH(s \cdot \beta / \RP_S(1_G, s) \cdot \beta) = \sH(s \cdot \beta / \beta)$$
for every $s \in S \cup S^{-1}$.

Fix $s \in S \cup S^{-1}$. Let $g \in \RP_S(1_G, s)$ and let $\delta \in \Delta$. Notice for $f \in G$, $f \in \RP_S(1_G, s)$ if and only if the reduced $S$-word representation of $f$ does not begin on the left with $s$. So if $g \delta \not\in \RP_S(1_G, s)$, then the reduced $S$-word representation of $\delta$ must begin with the reduced $S$-word representation of $g^{-1}$. So the reduced $S$-word representation of $g \delta$ is obtained from the reduced $S$-word representation of $\delta$ by removing an initial segment. Since $1_G \in \Delta$ and $\Delta$ is left $S$-connected, it follows that $g \delta \in \Delta$. Therefore
$$\RP_S(1_G, s) \cdot \Delta \subseteq \RP_S(1_G, s) \cup \Delta.$$
A similar argument shows that
$$s \Delta \subseteq \RP_S(s, 1_G) \cup \Delta.$$
Therefore $\Delta$ right $S$-separates $(s \Delta, \RP_S(1_G, s) \Delta)$. Since $\Delta \subseteq \RP_S(1_G, s) \cdot \Delta$, by Lemma \ref{LEM INTUIT} we have
$$\sH(s \cdot \beta / \RP_S(1_G, s) \cdot \beta) = \sH(s \Delta \cdot \alpha / \RP_S(1_G, s) \Delta \cdot \alpha)$$
$$= \sH(s \Delta \cdot \alpha / \Delta \cdot \alpha) = \sH(s \cdot \beta / \beta).$$
\end{proof}

In the next section, after we prove that $f_H(X, \mu) = |G : H| \cdot f_G(X, \mu)$ for Markov processes $G \acts (X, \mu)$, we will extend this relation to general actions by approximating by Markov processes. The precise tool we will need is described in the following definition.

\begin{defn}
Let $G$ be a finitely generated free group acting on a probability space $(X, \mu)$, let $S$ be a free generating set for $G$, and let $\alpha$ be a generating partition. A Borel probability measure $\mu'$ on $X$ is called a \emph{$(S, \alpha)$-Markov approximation to $\mu$} if $\mu'$ is $G$-invariant, $(X, \mu')$ is a $(S, \alpha)$-Markov process, and
$$\forall s \in S \cup S^{-1} \ \forall A_1, A_2 \in \alpha \ \mu'(A_1 \cap s \cdot A_2) = \mu(A_1 \cap s \cdot A_2).$$
\end{defn}

Markov approximations can be used to approximate f-invariant entropy, as the following simple lemma shows.

\begin{lem} \label{LEM MARKAPP1}
Let $G$ be a finitely generated free group acting on a probability space $(X, \mu)$. Let $S$ be a free generating set for $G$ and let $\alpha$ be a countable measurable partition of $X$ with $\sH(\alpha) < \infty$. If $\mu'$ is a $(S, \alpha)$-Markov approximation to $\mu$ then
$$F_G(X, \mu', S, \alpha) = F_G(X, \mu, S, \alpha).$$
\end{lem}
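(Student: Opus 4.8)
The plan is to exploit the fact that the quantity $F_G(X, \mu, S, \alpha)$ is built entirely out of Shannon entropies of $\alpha$ and of the joins $s \cdot \alpha \vee \alpha$, and that Shannon entropy of a partition is a function of the measures of its atoms alone. Writing
$$F_G(X, \mu, S, \alpha) = (1 - 2r) \cdot \sH(\alpha) + \sum_{s \in S} \sH(s \cdot \alpha \vee \alpha),$$
I would first record that $\sH(\alpha)$ depends on $\mu$ only through the numbers $\{\mu(A) : A \in \alpha\}$, while for each $s \in S$ the term $\sH(s \cdot \alpha \vee \alpha)$ depends only on the numbers $\{\mu(A_1 \cap s \cdot A_2) : A_1, A_2 \in \alpha\}$, since the atoms of $s \cdot \alpha \vee \alpha$ are precisely the nonempty sets of the form $A_1 \cap s \cdot A_2$. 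Thus it suffices to check that $\mu$ and $\mu'$ assign equal measure to each of these finitely many (or countably many) sets.

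The join terms are immediate from the hypothesis: the defining property of a $(S,\alpha)$-Markov approximation gives $\mu'(A_1 \cap s \cdot A_2) = \mu(A_1 \cap s \cdot A_2)$ for all $s \in S \cup S^{-1}$ and all $A_1, A_2 \in \alpha$, which is already more than the $s \in S$ that appear in $F_G$. Hence the atoms of $s \cdot \alpha \vee \alpha$ carry the same measures under $\mu$ and $\mu'$, so $\sH_{\mu'}(s \cdot \alpha \vee \alpha) = \sH_{\mu}(s \cdot \alpha \vee \alpha)$ for every $s \in S$. The only step requiring a small argument is the marginal term $\sH(\alpha)$, since agreement on the atoms of $\alpha$ is not literally part of the hypothesis. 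Assuming $r \geq 1$ (the rank-zero case being trivial, as then $G$ is trivial and $F_G = \sH(\alpha)$ with $\mu = \mu'$ forced), I would fix any $s \in S$ and marginalize: because $s \cdot \alpha = \{s \cdot A_2 : A_2 \in \alpha\}$ is a partition of $X$, for each $A \in \alpha$ we have
$$\mu(A) = \sum_{A_2 \in \alpha} \mu(A \cap s \cdot A_2) = \sum_{A_2 \in \alpha} \mu'(A \cap s \cdot A_2) = \mu'(A),$$
the middle equality again being the Markov-approximation property. This yields $\sH_{\mu'}(\alpha) = \sH_{\mu}(\alpha)$.

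Combining the two observations, every summand in the defining formula for $F_G$ takes the same value under $\mu$ and under $\mu'$, giving $F_G(X, \mu', S, \alpha) = F_G(X, \mu, S, \alpha)$; all quantities are finite because $\sH(\alpha) < \infty$ forces $\sH(s \cdot \alpha \vee \alpha) \leq 2\sH(\alpha) < \infty$. I do not anticipate a genuine obstacle here: the content of the lemma is simply that $F_G$ is a function of the pair correlations $\mu(A_1 \cap s \cdot A_2)$ and the one-dimensional marginals, and the marginals are recovered from the pair correlations by summing out one coordinate. The only point deserving care is to justify that marginalization step rather than assuming agreement of $\mu$ and $\mu'$ on $\alpha$ outright.
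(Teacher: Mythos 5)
Your proposal is correct and takes essentially the same approach as the paper, whose one-line proof simply asserts that the hypothesis $\mu'(A_1 \cap s \cdot A_2) = \mu(A_1 \cap s \cdot A_2)$ forces $\sH_{\mu'}(\alpha) = \sH_\mu(\alpha)$ and $\sH_{\mu'}(s \cdot \alpha \vee \alpha) = \sH_\mu(s \cdot \alpha \vee \alpha)$ and then invokes the definition of $F_G$; your marginalization argument $\mu(A) = \sum_{A_2 \in \alpha} \mu(A \cap s \cdot A_2) = \mu'(A)$ is exactly the step the paper leaves implicit, and making it explicit is a point in your favor (only your parenthetical that $\mu = \mu'$ is \emph{forced} when $r = 0$ is inaccurate, but that degenerate case is outside the paper's intended setting and immaterial to the argument).
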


\begin{proof}
Since $\mu'(A_1 \cap s \cdot A_2) = \mu(A_1 \cap s \cdot A_2)$ for every $A_1, A_2 \in \alpha$ and $s \in S \cup S^{-1}$, we have
$$\sH_{\mu'}(\alpha) = \sH_\mu(\alpha) \text{ and } \forall s \in S \cup S^{-1} \ \sH_{\mu'}(s \cdot \alpha \vee \alpha) = \sH_\mu(s \cdot \alpha \vee \alpha).$$
So the lemma now immediately follows from the definition of $F_G(X, \cdot, S, \alpha)$.
\end{proof}

In general Markov approximations do not always exist, however if one is willing to replace $G \acts (X, \mu)$ with a measurably conjugate action $G \acts (Y, \nu)$, then one can arrange for Markov approximations to exist. When a Markov approximation does exist, it is unique \cite{B10d}. In order for Markov approximations to exist, it is sufficient to work within the setting of symbolic actions and canonical partitions.

\begin{defn}
Let $G$ be a countable group, and let $K$ be a countable set with the discrete topology. Let $K^G$ denote the set of all functions from $G$ to $K$ endowed with the product topology, and let $G$ act on $K^G$ by permuting coordinates:
$$\forall x \in K^G \ \forall g, h \in G \ (g \cdot x)(h) = x(g^{-1} h).$$
We call the action of $G$ on $K^G$ a \emph{symbolic action}. The \emph{canonical partition} of $K^G$ is $\alpha = \{A_k \: k \in K\}$, where $A_k = \{x \in K^G \: x(1_G) = k\}$.
\end{defn}

There is no loss in generality in working with symbolic actions, as the following lemma shows.

\begin{lem} \label{LEM SYMREP}
Let $G$ be a countable group acting on a probability space $(X, \mu)$, and let $\alpha$ be a generating partition. Then there exists a measurable map $\phi: X \rightarrow \alpha^G$ such that $\phi: (X, \mu) \rightarrow (\alpha^G, \phi_*(\mu))$ is a measure conjugacy and $\alpha = \phi^{-1}(\beta)$, where $\beta$ is the canonical partition of $\alpha^G$.
\end{lem}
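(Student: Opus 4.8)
The plan is to construct the natural itinerary (coding) map and verify its required properties one at a time. Taking the countable set $K$ to be $\alpha$ itself, I would define $\phi : X \to \alpha^G$ by declaring $\phi(x)(g)$ to be the unique atom $A \in \alpha$ with $g^{-1} \cdot x \in A$; this is well defined for every $x \in X$ since $\alpha$ partitions $X$. The target measure is taken to be the pushforward $\phi_*(\mu)$ by fiat, so the substantive claims are that $\phi$ is measurable and $G$-equivariant, that $\phi^{-1}(\beta) = \alpha$, and that $\phi$ is injective modulo null sets, so that it realizes a measure conjugacy onto its image rather than merely an equivariant factor map.

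First I would check measurability and equivariance. The Borel $\sigma$-algebra of $\alpha^G$ is generated by the cylinders $\{y \in \alpha^G : y(g) = A\}$ for $g \in G$ and $A \in \alpha$, and the $\phi$-preimage of such a cylinder is exactly $\{x : g^{-1} \cdot x \in A\} = g \cdot A$, which is measurable; hence $\phi$ is measurable. Equivariance is a direct computation from $(g \cdot y)(h) = y(g^{-1} h)$: the value $\phi(g \cdot x)(h)$ is the atom containing $h^{-1} \cdot (g \cdot x) = (h^{-1} g) \cdot x$, while $(g \cdot \phi(x))(h) = \phi(x)(g^{-1} h)$ is the atom containing $(g^{-1} h)^{-1} \cdot x = (h^{-1} g) \cdot x$, and these agree, so $\phi(g \cdot x) = g \cdot \phi(x)$. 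The identity $\phi^{-1}(\beta) = \alpha$ follows from the special case $g = 1_G$: for $A \in \alpha$ one has $\phi^{-1}(\{y : y(1_G) = A\}) = \{x : x \in A\} = A$.

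The heart of the argument, and the step I expect to be the main obstacle, is upgrading $\phi$ from an equivariant factor map to a genuine measure conjugacy. The sub-$\sigma$-algebra $\phi^{-1}(\mathcal{B}_{\alpha^G})$ is generated by the sets $g \cdot A$ ranging over $g \in G$ and $A \in \alpha$, and so is precisely the $\sigma$-algebra $G \cdot \alpha$. Because $\alpha$ is generating, $G \cdot \alpha$ coincides with the full measurable $\sigma$-algebra of $(X, \mu)$ modulo null sets, meaning that for every measurable $B \subseteq X$ there is $B' \in G \cdot \alpha$ with $\mu(B \triangle B') = 0$. On a standard probability space a measurable map whose associated coding $\sigma$-algebra is the whole space modulo null sets is injective off a null set; invoking the Lusin--Souslin theorem, such a map is then a Borel isomorphism onto a Borel subset of $\alpha^G$ of full $\phi_*(\mu)$-measure, and this exhibits $\phi : (X, \mu) \to (\alpha^G, \phi_*(\mu))$ as a measure conjugacy. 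This final conversion of the generating hypothesis into injectivity modulo null sets is the only place where standardness of the probability space is essential, and is exactly the classical fact underlying symbolic representations of actions equipped with a countable generator.
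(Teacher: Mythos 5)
Your proposal is correct and takes essentially the same approach as the paper: the identical itinerary map $\phi(x)(g) = $ the atom of $\alpha$ containing $g^{-1} \cdot x$, the same equivariance computation, the same identification $\phi^{-1}(\beta) = \alpha$, and the same appeal to the generating hypothesis together with standardness of both probability spaces to conclude that $\phi$ is a measure conjugacy. The only differences are expository, in that you spell out the a.e.-injectivity argument (via the coding $\sigma$-algebra and Lusin--Souslin) that the paper compresses into a one-line citation of standardness.
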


\begin{proof}
Since $\alpha$ is generating, by definition we have that $\alpha$ is countable. Thus $G \acts \alpha^G$ is a symbolic action. Define $\zeta: X \rightarrow \alpha$ by letting $\zeta(x)$ be the unique $A \in \alpha$ with $x \in A$. We define a map $\phi: X \rightarrow \alpha^G$ by
$$\phi(x)(g) = \zeta(g^{-1} \cdot x).$$
The function $\phi$ is $G$-equivariant since
$$\phi(h \cdot x)(g) = \zeta(g^{-1} \cdot h \cdot x) = \phi(x)(h^{-1} g) = [h \cdot \phi(x)](g).$$
Let $\nu$ be the pushforward measure, $\nu = \phi_*(\mu)$. Then $\phi$ is an isomorphism between $(X, \mu)$ and $(\alpha^G, \nu)$ since $\alpha$ is generating and both of these probability spaces are standard Borel probability spaces. Let $\beta$ be the canonical partition of $\alpha^G$. Write $\beta = \{B_A \: A \in \alpha\}$ where $B_A = \{y \in \alpha^G \: y(1_G) = A\}$. Clearly $\phi(A) \subseteq B_A$ for every $A \in \alpha$. Therefore $A \subseteq \phi^{-1}(B_A)$ for each $A \in \alpha$. Since both $\alpha$ and $\phi^{-1}(\beta)$ are partitions of $X$, it follows that $\phi^{-1}(\beta) = \alpha$.
\end{proof}

\begin{thm}[Bowen, \cite{B10d}] \label{THM MARKAPP}
Let $G$ be a finitely generated free group, and let $S$ be a free generating set for $G$. If $G \acts K^G$ is a symbolic action, $\mu$ is a $G$-invariant Borel probability measure, and $\alpha$ is the canonical partition of $K^G$, then there exists a unique $G$-invariant Borel probability measure $\mu'$ on $K^G$ which is a $(S, \alpha)$-Markov approximation to $\mu$.
\end{thm}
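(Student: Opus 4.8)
The plan is to build $\mu'$ as the tree-indexed Markov measure whose single-site and nearest-neighbor marginals agree with those of $\mu$, exploiting that the right $S$-Cayley graph of $G$ is a tree. First I would record the data that a Markov approximation is forced to match. Writing $A_k = \{x \in K^G : x(1_G) = k\}$, note that $s \cdot A_k = \{x : x(s) = k\}$, so the constraint $\mu'(A_{k_1} \cap s \cdot A_{k_2}) = \mu(A_{k_1} \cap s \cdot A_{k_2})$ says precisely that the joint law of the coordinate pair $(x(1_G), x(s))$ under $\mu'$ must equal that under $\mu$. Accordingly I set $\pi(k) = \mu(A_k)$ and, for $s \in S \cup S^{-1}$, $q_s(k_1,k_2) = \mu(\{x : x(1_G)=k_1,\ x(s)=k_2\})$. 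The $G$-invariance of $\mu$ gives that the law of $(x(g), x(gs))$ is independent of $g$, that summing $q_s$ over either coordinate returns $\pi$, and that $q_{s^{-1}}(k_2,k_1) = q_s(k_1,k_2)$, so the edge data is consistently attached to the unoriented edges of the Cayley tree.

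For existence I would define, for each finite right $S$-connected set $F$ (a finite subtree) and each $\xi \in K^F$,
$$\mu'_F(\xi) = \prod_{g \in F} \pi(\xi_g) \prod_{\substack{(g, gs) \in \E(F) \\ s \in S}} \frac{q_s(\xi_g, \xi_{gs})}{\pi(\xi_g)\,\pi(\xi_{gs})},$$
with the convention that a factor is $0$ when its numerator vanishes. The heart of the existence argument is the consistency required for the Kolmogorov extension theorem: marginalizing $\mu'_F$ over a leaf $g = hs$ of the subtree, the $g$-dependent factors collapse to $\sum_{\xi_g} q_s(\xi_h, \xi_g)/\pi(\xi_h) = 1$ by the marginal property of $q_s$, leaving exactly $\mu'_{F \setminus \{g\}}$. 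Since every finite subset of $G$ is contained in its finite convex hull in the tree, this leaf-peeling shows that the $\mu'_F$ form a consistent family over the cofinal directed system of finite subtrees, and Kolmogorov extension produces a Borel probability measure $\mu'$ on $K^G$ with these finite-dimensional distributions. The $G$-invariance of $\mu'$ is then immediate, since left translation by any $g$ is a tree automorphism carrying $s$-edges to $s$-edges while $\pi$ and the $q_s$ are translation-independent; and the single-edge case of the formula returns $q_s$, so the edge-matching property holds by construction.

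It remains to check that $(K^G, \mu')$ is a $(S, \alpha)$-Markov process, i.e. that $\cex(\chi_{s \cdot A_k} / \RP_S(1_G, s) \cdot \alpha) = \cex(\chi_{s \cdot A_k}/\alpha)$ for every $s \in S \cup S^{-1}$ and $k \in K$. For any finite right $S$-connected $W$ with $1_G \in W \subseteq \RP_S(1_G, s)$, the vertex $1_G$ right $S$-separates $s$ from $W \setminus \{1_G\}$, so the explicit formula shows that the conditional law of $x(s)$ given $(x(w))_{w \in W}$ depends only on $x(1_G)$, namely it equals $q_s(x(1_G), k)/\pi(x(1_G))$. Exhausting $\RP_S(1_G, s)$ by such sets $W$ and passing to the limit via martingale convergence (clause (iv) of Lemma \ref{LEM SHAN}, or the $L^1$ convergence theorem for conditional expectations) yields the displayed identity. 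I expect this passage to the \emph{infinite} past to be the main obstacle: the formula only delivers conditional independence relative to finite subtrees, and one must justify both the martingale limit and the fact that the limiting conditional expectation remains a function of $x(1_G)$ alone.

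Finally, for uniqueness, suppose $\mu''$ is another $(S,\alpha)$-Markov approximation. Edge-matching forces $\mu''$ to share the marginals $\pi$ and $q_s$ with $\mu'$ (in particular $\mu''(A_k) = \sum_{k'} \mu''(A_k \cap s \cdot A_{k'}) = \pi(k)$). Using $G$-invariance of $\mu''$ to translate an arbitrary leaf of a finite subtree into a generator direction, the Markov condition shows that the conditional law of that leaf coordinate given the remaining subtree coordinates is the same transition kernel $q_s/\pi$ that defines $\mu'$; peeling leaves one at a time then determines every finite-subtree marginal of $\mu''$, forcing $\mu'' = \mu'$ on the algebra of subtree cylinders and hence on all of $K^G$.
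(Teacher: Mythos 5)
The paper does not actually prove this theorem: it is imported from Bowen \cite{B10d} as a citation, so there is no internal proof to compare your argument against. Judged on its own merits, your proposal is correct, and it is essentially the canonical argument (and, in outline, Bowen's): the approximation constraints force a vertex law $\pi$ and edge laws $q_s$ attached to the edges of the right $S$-Cayley tree; the tree-indexed Markov measure built from these on finite subtrees is consistent under leaf-peeling, hence extends by Kolmogorov's theorem; $G$-invariance and edge-matching hold by construction; and uniqueness follows by peeling leaves of finite subtrees. Two points deserve to be made explicit in a full write-up. First, the passage to the infinite past that you flag as the main obstacle is closed exactly as you indicate: for every finite right $S$-connected $W$ with $1_G \in W \subseteq \RP_S(1_G, s)$, the conditional expectation $\cex(\chi_{s \cdot A_k} / W \cdot \alpha)$ equals the single $\alpha$-measurable function $q_s(x(1_G), k)/\pi(x(1_G))$, so L\'{e}vy's upward martingale convergence theorem (applied to an exhaustion of $\RP_S(1_G,s)$ by such $W$) identifies $\cex(\chi_{s \cdot A_k} / \RP_S(1_G, s) \cdot \alpha)$ with that same function; but note that your parenthetical alternative via clause (iv) of Lemma \ref{LEM SHAN} is not available in general, since converting an entropy equality back into an equality of conditional expectations (the ``Conversely'' clause of Lemma \ref{LEM SHAN}) requires $\sH(\alpha) < \infty$, which is not assumed here, so the martingale route is the right one. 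Second, in the uniqueness step the Markov condition only speaks of the full past $\RP_S(1_G, t) \cdot \alpha$; after translating the attaching vertex of a leaf to the identity, you should pass to the remaining subtree coordinates by the tower property along the inclusions $\alpha \leq (F \setminus \{t\}) \cdot \alpha \leq \RP_S(1_G, t) \cdot \alpha$, which is precisely where one uses that the leaf $t$ meets the rest of $F$ only at $1_G$.
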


In Appendix A of \cite{BG}, Bowen and Gutman show that a stronger property holds. With the same notation and assumptions as in the previous theorem, they showed that if $B_S(n)$ denotes the $S$-ball of radius $n$ centered on the identity, then there exists a unique $G$-invariant Borel probability measure $\mu'$ on $K^G$ which is a $(S, B_S(n) \cdot \alpha)$-Markov approximation to $\mu$. Their result is sufficient for our needs in the next section, however we will obtain tighter bounds in our corollaries by proving the following.

\begin{lem} \label{LEM MARKAPP2}
Let $G$ be a finitely generated free group, and let $S$ be a free generating set for $G$. Let $G \acts K^G$ be a symbolic action, let $\mu$ be a $G$-invariant Borel probability measure, and let $\alpha$ be the canonical partition of $K^G$. If $U \subseteq G$ is finite, left $S$-connected, and contains the identity, then there exists a unique $G$-invariant Borel probability measure $\mu'$ on $K^G$ which is a $(S, U \cdot \alpha)$-Markov approximation to $\mu$.
\end{lem}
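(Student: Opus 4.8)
The plan is to reduce to Theorem \ref{THM MARKAPP} by passing to a higher-block recoding. Set $L = K^U$ and define a map $\Phi : K^G \to L^G$ by $\Phi(x)(g)(u) = x(gu)$ for $g \in G$ and $u \in U$. A direct check shows $\Phi$ is continuous and $G$-equivariant, and since $1_G \in U$ one recovers $x(g) = \Phi(x)(g)(1_G)$, so $\Phi$ is injective; hence $\Phi$ is a $G$-equivariant Borel isomorphism onto its image $Y = \Phi(K^G)$. If $\gamma$ denotes the canonical partition of $L^G$, then unwinding the definitions gives $\Phi^{-1}(\gamma) = U \cdot \alpha$. First I would push $\mu$ forward to the $G$-invariant measure $\nu = \Phi_*(\mu)$ on $L^G$ (supported on $Y$), apply Theorem \ref{THM MARKAPP} to obtain the unique $G$-invariant $(S, \gamma)$-Markov approximation $\nu'$ to $\nu$, and then pull $\nu'$ back through $\Phi$. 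The whole difficulty is concentrated in showing that $\nu'$ is again supported on $Y$, for only then can $\mu' := (\Phi^{-1})_*(\nu')$ be defined.

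To analyze $Y$, note that $y \in Y$ exactly when $y$ is \emph{consistent}, i.e. $y(g)(u) = y(g')(u')$ whenever $gu = g'u'$ with $u, u' \in U$. I would first record that a \emph{nearest-neighbor} version of consistency holds $\nu'$-almost surely: for each $s \in S \cup S^{-1}$ the event that $y(1_G)(u) = y(s)(s^{-1}u)$ for all $u \in U$ with $s^{-1}u \in U$ is a finite union of cylinders of the form $C_1 \cap s \cdot C_2$ with $C_1, C_2 \in \gamma$, and hence has the same measure under $\nu'$ as under $\nu$ by the defining matching property of a Markov approximation; since $\nu$ lives on $Y$ this measure is $1$. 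By $G$-invariance of $\nu'$ and countability of $G$, almost every $y$ satisfies nearest-neighbor consistency across every edge $(g, gs)$ of the right $S$-Cayley graph simultaneously.

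The heart of the argument is to upgrade nearest-neighbor consistency to full consistency using that $U$ is left $S$-connected. Given $gu = g'u'$, set $w = g^{-1}g'$ and write the reduced $S$-word $w = s_1 \cdots s_k$, so that $u = w u'$. Both $u'$ and $u = wu'$ lie in $U$, and the reduced left $S$-path between them passes through $p_j := s_j \cdots s_k u'$ for $j = 1, \ldots, k+1$; left $S$-connectedness forces every $p_j \in U$. Setting $g_j := g\, s_1 \cdots s_{j-1}$, one computes $g_j^{-1}(gu) = p_j$ and $s_j^{-1} p_j = p_{j+1}$, both in $U$, so nearest-neighbor consistency across the edge $(g_j, g_j s_j)$ yields $y(g_j)(p_j) = y(g_{j+1})(p_{j+1})$. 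Telescoping from $j = 1$ to $j = k+1$ gives $y(g)(u) = y(g')(u')$. As there are only countably many consistency relations, $\nu'$-almost every $y$ satisfies all of them, so $\nu'(Y) = 1$. I expect this propagation step — and in particular the verification that left $S$-connectedness is exactly what keeps the intermediate within-block coordinates $p_j$ inside $U$ — to be the main obstacle; without it the two-coordinate statistics matched by the Markov approximation would not suffice to control the measure of $Y$.

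Finally I would transfer everything back. Since $\nu'(Y) = 1$ and $\Phi$ is a $G$-equivariant Borel isomorphism onto $Y$ with $\Phi^{-1}(\gamma) = U \cdot \alpha$, the measure $\mu' := (\Phi^{-1})_*(\nu')$ is a well-defined $G$-invariant measure on $K^G$, and the $(S, \gamma)$-Markov property of $\nu'$ together with its edge-statistic agreement with $\nu$ transport directly to show that $(K^G, \mu')$ is a $(S, U \cdot \alpha)$-Markov process agreeing with $\mu$ on the relevant two-coordinate statistics; that is, $\mu'$ is a $(S, U \cdot \alpha)$-Markov approximation to $\mu$. Uniqueness follows by running the correspondence in reverse: any $(S, U \cdot \alpha)$-Markov approximation $\mu''$ to $\mu$ pushes forward under $\Phi$ to a measure on $Y$ which, by the same transfer of the Markov property and the matching statistics, is a $(S, \gamma)$-Markov approximation to $\nu$; the uniqueness clause of Theorem \ref{THM MARKAPP} gives $\Phi_*(\mu'') = \nu'$, whence $\mu'' = \mu'$.
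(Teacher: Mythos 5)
Your proposal is correct and takes essentially the same route as the paper: the identical higher-block recoding $\Phi(x)(g)(u) = x(gu)$ into $(K^U)^G$ with $\Phi^{-1}(\gamma) = U \cdot \alpha$, an appeal to Theorem \ref{THM MARKAPP} in the recoded system, the same two ingredients (the two-coordinate matching property of Markov approximations, and left $S$-connectedness together with $1_G \in U$ to telescope nearest-neighbor consistency up to full consistency) to force the approximation onto the image of $\Phi$, and the same transfer back, with your uniqueness argument just making explicit what the paper gets by citing \cite{B10d}. The only differences are packaging — the paper realizes $\Phi$ as a homeomorphism onto the \emph{closed} nearest-neighbor-consistent set $Y$ and shows the approximating measure gives measure zero to open cylinders missing $Y$, whereas you argue measure-theoretically ($\nu'$-a.e.\ consistency plus a Borel isomorphism onto the image) — and one harmless slip: when $K$ is infinite your event $E_s$ is a \emph{countable}, not finite, union of sets $C_1 \cap s \cdot C_2$ with $C_1, C_2 \in \gamma$, which countable additivity handles just as well.
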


\begin{proof}
Write $\alpha = \{A_k \: k \in K\}$ where
$$A_k = \{x \in K^G \: x(1_G) = k\}.$$
Set $\beta = U \cdot \alpha$ and write $\beta = \{B_z \: z \in K^U\}$, where for $z \in K^U$
$$B_z = \bigcap_{u \in U} u \cdot A_{z(u)}.$$
Consider the set $Y \subseteq (K^U)^G$ defined by
$$y \in Y \Longleftrightarrow \forall s \in S \cup S^{-1} \ \forall g \in G \ B_{y(g)} \cap s \cdot B_{y(g s)} \neq \varnothing.$$
Notice that $Y$ is $G$-invariant and closed. We claim that $G \acts K^G$ is topologically conjugate to $G \acts Y$, where $Y \subseteq (K^U)^G$ has the subspace topology. Define $\phi: K^G \rightarrow (K^U)^G$ by
$$\phi(x)(g)(u) = x(g u).$$
Since $U$ is finite and the map $x \mapsto x(g u)$ is continuous, $\phi$ is also continuous. If $x_1 \neq x_2 \in K^G$, then there is $g \in G$ with $x_1(g) \neq x_2(g)$. Hence $\phi(x_1)(g)(1_G) \neq \phi(x_2)(g)(1_G)$, so $\phi(x_1) \neq \phi(x_2)$, and therefore $\phi$ is injective. We have that $\phi$ is $G$-equivariant since
$$\phi(h \cdot x)(g)(u) = [h \cdot x](g u) = x(h^{-1} g u) = \phi(x)(h^{-1} g)(u) = [h \cdot \phi(x)](g)(u).$$
Also, if $g \in G$ and $s \in S \cup S^{-1}$ then for every $u \in U$
$$g^{-1} \cdot x \in u \cdot A_{x(g u)} \text{ and } g^{-1} \cdot x \in s u \cdot A_{x(g s u)}.$$
Therefore
$$g^{-1} \cdot x \in \bigcap_{u \in U} u \cdot A_{x(g u)} = B_{\phi(x)(g)} \text{ and } g^{-1} \cdot x \in \bigcap_{u \in U} s u \cdot A_{x(g s u)} = s \cdot B_{\phi(x)(g s)}.$$
So
$$B_{\phi(x)(g)} \cap s \cdot B_{\phi(x)(g s)} \supseteq \{g^{-1} \cdot x\} \neq \varnothing$$
and thus $\phi$ maps $K^G$ into $Y$.

It remains to show that $\phi$ maps $K^G$ onto $Y$ and $\phi^{-1}$ is continuous. Fix $y \in Y$. Define $x \in K^G$ by $x(g) = y(g)(1_G)$. We claim that $\phi(x) = y$. If so then $\phi$ will map onto $Y$ and $\phi^{-1}$ will be continuous, completing the proof that $\phi$ is a topological conjugacy. By  the definition of $\phi$ and of $x$ we have $\phi(x)(g)(u) = x(g u) = y(g u)(1_G)$. So $y(g)(u) = \phi(x)(g)(u)$ if and only if $y(g)(u) = y(g u)(1_G)$. Thus it suffices to show that $y(g u)(1_G) = y(g)(u)$ for every $g \in G$ and $u \in U$. First, we claim that if $u \in U$, $s \in S \cup S^{-1}$, and $s \cdot u \in U$, then $y(g)(s u) =  y(g s)(u)$. By the definition of $Y$ we have
$$\varnothing \neq B_{y(g)} \cap s \cdot B_{y(g s)} \subseteq s u \cdot A_{y(g)(s u)} \cap s u \cdot A_{y(g s)(u)}.$$
Since $\alpha = \{A_k \: k \in K\}$ is a partition, it immediately follows that $A_{y(g)(s u)} = A_{y(g s)(u)}$ and hence $y(g)(s u) = y(g s)(u)$. Now fix $u \in U$ and let $u = s_1 s_2 \cdots s_n$ be the reduced $S$-word representation of $u$, where each $s_i \in S \cup S^{-1}$. Since $U$ is left $S$-connected and contains the identity, we have that $s_i s_{i+1} \cdots s_n \in U$ for every $1 \leq i \leq n$. Furthermore, $1_G \in U$ by assumption. By the previous claim we have that
$$y(g)(u) = y(g)(s_1 s_2 \cdots s_n) = y(g s_1)(s_2 \cdots s_n)$$
$$= \cdots = y(g s_1 \cdots s_n)(1_G) = y(g u)(1_G).$$
Thus $\phi(x) = y$ so $\phi$ maps $K^G$ onto $Y$ and $\phi^{-1}$ is continuous. We conclude that $K^G$ and $Y$ are topologically conjugate via $\phi$.

Now we prove the lemma. Since $\phi: K^G \rightarrow Y \subseteq (K^U)^G$ is a topological conjugacy, it induces a measure conjugacy between $G \acts (K^G, \mu)$ and $G \acts ((K^U)^G, \nu)$, where $\nu = \phi_*(\mu)$ is the pushforward measure (so $\nu$ is supported on $Y$). Let $\xi = \{C_z \: z \in K^U\}$ be the canonical partition of $(K^U)^G$, where $C_z = \{y \in (K^U)^G \: y(1_G) = z\}$. If $z \in K^U$ and
$$x \in B_z = \bigcap_{u \in U} u \cdot A_{z(u)}$$
then $\phi(x)(1_G)(u) = x(u) = z(u)$ for every $u \in U$. Thus $\phi(B_z) \subseteq C_z$. Since $\phi$ is injective it follows that $\beta = \phi^{-1}(\xi)$. By Theorem \ref{THM MARKAPP}, there is a $G$-invariant Borel probability measure $\lambda$ on $(K^U)^G$ which is a $(S, \xi)$-Markov approximation to $\nu$.

We claim that the support of $\lambda$ is contained within $Y$. Fix $w \in (K^U)^G \setminus Y$. By the definition of $Y$, there are $g \in G$ and $s \in S \cup S^{-1}$ with
$$B_{w(g)} \cap s \cdot B_{w(g s)} = \varnothing.$$
Consider the open set
$$V = \{z \in (K^U)^G \: z(g) = w(g), \ z(g s) = w(g s)\}.$$
Then $w \in V$ and $V \cap Y = \varnothing$. It suffices to show that $\lambda(V) = 0$. We have that $g^{-1} \cdot V \in \xi \vee s \cdot \xi$, so
$$\lambda(V) = \lambda(g^{-1} \cdot V) = \nu(g^{-1} \cdot V) = \phi_*(\mu)(g^{-1} \cdot V) = \mu(\phi^{-1}(g^{-1} \cdot V)).$$
However, since $\phi$ maps $K^G$ into $Y$, we have $\phi^{-1}(g^{-1} \cdot V) = \varnothing$. Thus $\lambda(V) = 0$ as claimed.

Since the support of $\lambda$ is contained within the image of the topological conjugacy $\phi$, we have that $\phi$ induces a measure conjugacy between $((K^U)^G, \lambda)$ and $(K^G, \phi^{-1}_*(\lambda))$. Set $\mu' = \phi^{-1}_*(\lambda)$. Then $\mu'$ is a $G$-invariant Borel probability measure on $K^G$. Since $\lambda$ is a $(S, \xi)$-Markov approximation to $\nu$, by applying $\phi^{-1}$ we get that $\mu'$ is a $(S, \beta)$-Markov approximation to $\mu$. The measure $\mu'$ is unique by \cite[Theorem 7.1]{B10d}. This completes the proof as $\beta = U \cdot \alpha$.
\end{proof}

\section{Subgroups and f-invariant entropy} \label{SEC SUBFORM}

In this section we prove the main theorem and deduce some of its corollaries. Our goal is to first establish the main theorem in the context of Markov processes and then use Markov approximations to extend the result to general actions. Our first step is to show that if $G \acts (X, \mu)$ is a Markov process and $H \leq G$ is a subgroup of finite index, then $H \acts (X, \mu)$ is a Markov process as well. The difficulty in showing this is that the characterization of Markov processes delicately depends on both the choice of a free generating set for the group and on the choice of a generating partition for the action.

\begin{thm} \label{THM SUBMARK}
Let $G$ be a free group, let $G$ act on $(X, \mu)$, and let $H \leq G$ be a subgroup of finite index. If $G \acts (X, \mu)$ is a Markov process with a Markov partition having finite Shannon entropy, then $H \acts (X, \mu)$ is also a Markov process. In fact, if $G \acts (X, \mu)$ is a $(S, \alpha)$-Markov process with $\sH(\alpha) < \infty$ and $\Delta \subseteq G$ is any right $S$-connected transversal of the right $H$-cosets $\{H g \: g \in G\}$ with $1_G \in \Delta$, then there exists a free generating set $T$ for $H$ such that $H \acts (X, \mu)$ is a $(T, \Delta \cdot \alpha)$-Markov process.
\end{thm}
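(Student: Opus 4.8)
The plan is to produce the free generating set $T$ from a Nielsen--Schreier construction adapted to the right $S$-Cayley tree, to identify the right $T$-Cayley graph of $H$ with the ``tile graph'' of the translates $\{h \Delta : h \in H\}$, and then to verify the Markov condition of Lemma \ref{LEM DEFN} for $H$ by transferring it to a separation statement in $G$ to which Lemma \ref{LEM INTUIT} applies. First I record the easy reductions. Set $\beta = \Delta \cdot \alpha$. Since $\Delta$ is finite (as $|G:H| < \infty$) we have $\sH(\beta) \leq |\Delta| \cdot \sH(\alpha) < \infty$, and since $H \cdot \beta = H \Delta \cdot \alpha = G \cdot \alpha$, the partition $\beta$ is generating for $H \acts (X,\mu)$. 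By Lemma \ref{LEM DEFN} it then suffices to exhibit a free generating set $T$ for $H$ with $\sH(t \cdot \beta / \RP_T(1_H, t) \cdot \beta) = \sH(t \cdot \beta / \beta)$ for every $t \in T \cup T^{-1}$, where $\RP_T$ denotes the right $T$-past inside $H$.

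Next I set up the geometry. Because $\Delta$ is right $S$-connected and contains $1_G$, it is ``prefix closed'' in the right $S$-Cayley tree $\mathcal{T}$ of $G$ (the path from $1_G$ to $\delta \in \Delta$ visits exactly the prefixes of the reduced $S$-word of $\delta$), so $\Delta$ is a Schreier transversal; the associated Schreier generators $t_{\delta,s} = \delta s \, (\overline{\delta s})^{-1}$, where $\overline{g}$ is the representative in $\Delta$ of $Hg$, yield, after choosing one element of each inverse pair, a free generating set $T$ for $H$. Left multiplication by $H$ acts on $\mathcal{T}$ by graph automorphisms, and the subtrees $\{h\Delta : h \in H\}$ tile $\V(\mathcal{T}) = G$. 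The central combinatorial observations are: (a) any two tiles are joined by at most one edge of $\mathcal{T}$, for two such edges together with paths inside the two connected tiles would form a cycle in the tree $\mathcal{T}$; and (b) an edge of $\mathcal{T}$ leaving $h\Delta$ joins it precisely to $h t_{\delta,s}\Delta$. Consequently the tile graph (one vertex per tile, an edge whenever two tiles are $\mathcal{T}$-adjacent) is a tree isomorphic to the right $T$-Cayley graph of $H$ via $h \mapsto h\Delta$; in particular the tiles $\Delta$ and $t\Delta$ are joined by a unique edge $e_t = (\delta_t, \delta_t s_t)$ with $\delta_t \in \Delta$.

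The key step is the separation claim: for each $t \in T \cup T^{-1}$, the set $\Delta$ right $S$-separates $(t\Delta,\ \RP_T(1_H,t)\Delta)$ in $\mathcal{T}$. Removing the single edge $e_t$ disconnects $\mathcal{T}$ into two pieces; since each tile is connected and $e_t$ lies between tiles, each tile lies wholly in one piece, and under the isomorphism of (a)--(b) the piece containing $\Delta$ is exactly $\bigcup_{h \in \RP_T(1_H,t)} h\Delta = \RP_T(1_H,t)\Delta$, while the piece containing $t\Delta$ is its complement. Hence any right $S$-path from a point of $t\Delta$ to a point of $\RP_T(1_H,t)\Delta$ must cross $e_t$ and therefore traverse $\delta_t \in \Delta$, which is the assertion.

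Finally I close the argument. Apply Lemma \ref{LEM INTUIT} with $U = t\Delta$ (finite), $V = \Delta$, and $W = \RP_T(1_H,t)\Delta$; since $1_H \in \RP_T(1_H,t)$ we have $\Delta \subseteq \RP_T(1_H,t)\Delta$, so $(W \cup V) \cdot \alpha = \RP_T(1_H,t)\Delta \cdot \alpha = \RP_T(1_H,t) \cdot \beta$, and the lemma gives
$$\sH(t \cdot \beta / \RP_T(1_H,t) \cdot \beta) = \sH(t\Delta \cdot \alpha / \Delta \cdot \alpha) = \sH(t \cdot \beta / \beta).$$
As this holds for every $t \in T \cup T^{-1}$, Lemma \ref{LEM DEFN} shows $H \acts (X,\mu)$ is a $(T,\beta)$-Markov process, which proves the theorem. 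I expect the main obstacle to be the third paragraph's groundwork: rigorously matching the right $T$-Cayley tree of $H$ with the tiling of $\mathcal{T}$, namely the single-edge property (a) and the identification of the two sides of $e_t$ with the $\RP_T$ decomposition. Once that correspondence is clean, the entropy identity follows mechanically from Lemmas \ref{LEM INTUIT} and \ref{LEM DEFN}.
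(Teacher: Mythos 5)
Your proposal is correct and follows essentially the same route as the paper's proof: the Schreier construction of $T$ from the right $S$-connected transversal $\Delta$, the transfer of the separation statement from the right $T$-Cayley tree of $H$ to the right $S$-Cayley tree of $G$ (your tile-graph identification is a repackaging of the paper's equivalence that $V$ right $T$-separates $(U, W)$ if and only if $V \Delta$ right $S$-separates $(U \Delta, W \Delta)$), and the same final application of Lemmas \ref{LEM INTUIT} and \ref{LEM DEFN} with $U = t\Delta$, $V = \Delta$, $W = \RP_T(1_H, t)\Delta$. The only cosmetic differences are that the paper proves freeness of $T$ via the Schreier graph and the van Kampen theorem and establishes the separation correspondence combinatorially for arbitrary subsets of $H$, whereas you invoke Nielsen--Schreier for the prefix-closed transversal and argue only the needed case by removing the unique connecting edge $e_t$; both are sound.
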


\begin{proof}
Assume that $G \acts (X, \mu)$ is a $(S, \alpha)$-Markov process. Following a construction of Schreier \cite[Theorem 2.9]{MKS}, we will pick a free generating set for $H$. Let $\Delta$ be a right $S$-connected transversal of the right $H$-cosets in $G$ with $1_G \in \Delta$. Define $r : G \rightarrow \Delta$ by letting $r(g) = \delta$ if and only if $H g = H \delta$. Define the cocycle $c: \Delta \times G \rightarrow H$ by
$$c(\delta, g) = \delta g \cdot r(\delta g)^{-1}.$$
Set $T = c(\Delta \times S) \setminus \{1_G\}$. We claim that $T$ is a free generating set for $H$.

Consider the directed and $S$-edge-labeled Schreier graph, $\Gamma$, of the right $H$-cosets in $G$. Specifically, the vertex set of $\Gamma$ is $\{H g \: g \in G\}$, and for every $g \in G$ and $s \in S$ there is an edge directed from $H g$ to $H g s$ labeled $s$. The right $S$-connected transversal $\Delta$ naturally gives rise to a spanning tree $\Lambda$ of $\Gamma$. Specifically, $\Lambda$ contains all of the vertices of $\Gamma$ and has an edge directed from $H \delta$ to $H \delta s$ labeled $s$ whenever $\delta, \delta s \in \Delta$ and $s \in S$. Clearly the fundamental group of $\Gamma$, $\pi_1(\Gamma)$, is naturally group isomorphic to $H$. Let $\phi: \pi_1(\Gamma) \rightarrow H$ be this group isomorphism. For each edge $e \in \E(\Gamma) \setminus \E(\Lambda)$, let $\ell_e$ be the simple loop in $\Lambda \cup \{e\}$ which begins and ends at the vertex $H$ and which traverses $e$ with positive orientation. By the van Kampen theorem, $\pi_1(\Gamma)$ is freely generated by the set $\{\ell_e \: e \in \E(\Gamma) \setminus \E(\Lambda)\}$. If $e \in \E(\Gamma) \setminus \E(\Lambda)$ is labeled by $s \in S$ and directed from $H \delta_1$ to $H \delta_2$, with $\delta_1, \delta_2 \in \Delta$, then
$$\phi(\ell_e) = \delta_1 s \delta_2^{-1} = \delta_1 s \cdot r(\delta_1 s)^{-1} = c(\delta_1, s) \in T.$$
So $\phi(\{\ell_e \: e \in \E(\Gamma) \setminus \E(\Lambda)\}) \subseteq T$. Now fix $t = \delta s \cdot r(\delta s)^{-1} \in T$. Since $t \neq 1_G$, we have that $\delta s \not\in \Delta$. Therefore the edge $e$ directed from $H \delta$ to $H \delta s$ and labeled $s$ is in $\E(\Gamma)$ but not in $\E(\Lambda)$. Thus $\ell_e$ is defined and clearly $\phi(\ell_e) = t$. Thus $\phi(\{\ell_e \: e \in \E(\Gamma) \setminus \E(\Lambda)\}) = T$. We conclude that $T$ freely generates $H$.

We claim that for $a \neq b \in H$, $a$ and $b$ are right $T$-adjacent if and only if $a \Delta \cup b \Delta$ is right $S$-connected. First suppose that $a$ and $b$ are right $T$-adjacent. Then we can swap $a$ and $b$ if necessary to find $t \in T$ with $b = a t$. Since $\Delta$ is right $S$-connected, so are both $a \Delta$ and $b \Delta$. So we only need to find a point in $a \Delta$ which is right $S$-adjacent to a point in $b \Delta$. Let $s \in S$ and $\delta_1, \delta_2 \in \Delta$ be such that $t = \delta_1 s \delta_2^{-1}$. Then we have that $a \delta_1 \in a \Delta$ is right $S$-adjacent to $a \delta_1 s = a t \delta_2 = b \delta_2 \in b \Delta$. Thus $a \Delta \cup b \Delta$ is right $S$-connected as claimed. Now suppose that $a \Delta \cup b \Delta$ is right $S$-connected. Then by swapping $a$ and $b$ if necessary we can find $s \in S$ and $\delta_1, \delta_2 \in \Delta$ with $a \delta_1 s = b \delta_2$. Notice that $H \delta_1 s = H \delta_2$ since $a, b \in H$, and therefore $r(\delta_1 s) = \delta_2$. We have
$$1_G \neq a^{-1} b = \delta_1 s \delta_2^{-1} = \delta_1 s \cdot r(\delta_1 s)^{-1} = c(\delta_1, s) \in T.$$
So for $t = \delta_1 s \delta_2^{-1}$ we have $a t = b$. Thus $a$ and $b$ are right $T$-adjacent as claimed.

It immediately follows from the previous paragraph that for $F \subseteq H$, $F$ is right $T$-connected if and only if $F \Delta$ is right $S$-connected. We claim that for $U, V, W \subseteq H$, $V$ right $T$-separates $(U, W)$ if and only if $V \Delta$ right $S$-separates $(U \Delta, W \Delta)$. First suppose that $V \Delta$ right $S$-separates $(U \Delta, W \Delta)$. Let $F \subseteq H$ be a right $T$-connected set with $U \cap F \neq \varnothing$ and $W \cap F \neq \varnothing$. Then $F \Delta$ is right $S$-connected and $U \Delta \cap F \Delta \neq \varnothing$ and $W \Delta \cap F \Delta \neq \varnothing$. So we must have that $V \Delta \cap F \Delta \neq \varnothing$. However, $V, F \subseteq H$ and since $\Delta$ is a transversal of the right $H$-cosets we have that $h_1 \Delta \cap h_2 \Delta \neq \varnothing$ if and only if $h_1 = h_2$. So we must have $F \cap V \neq \varnothing$. Therefore $V$ right $T$-separates $(U, W)$. Now suppose that $V$ right $T$-separates $(U, W)$. Let $F \subseteq G$ be a right $S$-connected set with $U \Delta \cap F \neq \varnothing$ and $W \Delta \cap F \neq \varnothing$. We must show that $V \Delta \cap F \neq \varnothing$. Define $F' \subseteq H$ by the rule
$$f \in F' \Longleftrightarrow f \Delta \cap F \neq \varnothing.$$
We have $F \subseteq F' \Delta$, so $U \Delta \cap F' \Delta \neq \varnothing$ and $W \Delta \cap F' \Delta \neq \varnothing$. Again, since $U, W, F' \subseteq H$ we have that $U \cap F' \neq \varnothing$ and $W \cap F' \neq \varnothing$. Furthermore, $F' \Delta$ is right $S$-connected since $F \subseteq F' \Delta$ is right $S$-connected and for every $f \in F'$ the set $f \Delta$ is right $S$-connected and meets $F$. This implies that $F'$ is right $T$-connected. Therefore $V \cap F' \neq \varnothing$. By the definition of $F'$, there is $v \in V$ with $v \in F'$ and hence $v \Delta \cap F \neq \varnothing$. So $V \Delta \cap F \neq \varnothing$ and we conclude that $V \Delta$ right $S$-separates $(U \Delta, W \Delta)$.

Now we show that $H \acts (X, \mu)$ is a $(T, \Delta \cdot \alpha)$-Markov process. We point out that $\Delta \cdot \alpha$ is a generating partition for $H \acts (X, \mu)$ since $G = H \Delta$ and $\alpha$ is generating for $G \acts (X, \mu)$. Set $\beta = \Delta \cdot \alpha$. Fix $t \in T \cup T^{-1}$. By Lemma \ref{LEM DEFN} it suffices to show that
$$\sH(t \cdot \beta / \RP_T(1_H, t) \cdot \beta) = \sH(t \cdot \beta / \beta).$$
We clearly have that $1_H$ right $T$-separates $(\RP_T(1_H, t), t)$, and so by the previous paragraph $\Delta$ right $S$-separates $(\RP_T(1_H, t) \Delta, t \Delta)$. Therefore by Lemma \ref{LEM INTUIT}
$$\sH(t \cdot \beta / \RP_T(1_H, t) \cdot \beta) = \sH(t \Delta \cdot \alpha / \RP_T(1_H, t) \Delta \cdot \alpha)$$
$$= \sH(t \Delta \cdot \alpha / \Delta \cdot \alpha) = \sH(t \cdot \beta / \beta).$$
This completes the proof.
\end{proof}

The following lemma is well known, but it also follows directly from the construction in the proof of the previous theorem.

\begin{lem}[Proposition I.3.9 \cite{LS77}] \label{LEM RANK}
Let $G$ be a finitely generated free group and let $r_G$ be the rank of $G$. If $H \leq G$ is of finite index, then the rank, $r_H$, of $H$ and index of $H$ are related by
$$r_H = |G : H| (r_G - 1) + 1.$$
\end{lem}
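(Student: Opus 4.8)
The plan is to read off the count directly from the Schreier-graph construction already carried out in the proof of Theorem \ref{THM SUBMARK}, rather than to reprove the Nielsen--Schreier formula from scratch. There, starting from a right $S$-connected transversal $\Delta$ of the right $H$-cosets containing $1_G$, we produced a free generating set $T = c(\Delta \times S) \setminus \{1_G\}$ for $H$, together with a group isomorphism $\phi : \pi_1(\Gamma) \rightarrow H$ carrying the free basis $\{\ell_e \: e \in \E(\Gamma) \setminus \E(\Lambda)\}$ of $\pi_1(\Gamma)$ bijectively onto $T$. Since $T$ freely generates $H$, the rank $r_H$ equals $|T|$, and since $\phi$ is an isomorphism it restricts to a bijection of the basis $\{\ell_e\}$ onto $T$. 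Hence
$$r_H = |T| = |\E(\Gamma) \setminus \E(\Lambda)| = |\E(\Gamma)| - |\E(\Lambda)|,$$
and the whole problem reduces to counting the edges of $\Gamma$ and of its spanning tree $\Lambda$.

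Next I would carry out the two edge counts. The Schreier graph $\Gamma$ has vertex set $\{H g \: g \in G\}$, which has exactly $|G : H|$ elements, and for each vertex $H g$ and each $s \in S$ there is precisely one edge, directed from $H g$ to $H g s$ and labeled $s$; therefore $|\E(\Gamma)| = |G : H| \cdot |S| = |G : H| \cdot r_G$. The subgraph $\Lambda$ is a spanning tree of the connected graph $\Gamma$ on $|G : H|$ vertices, so it has one fewer edge than it has vertices, giving $|\E(\Lambda)| = |G : H| - 1$. Substituting both counts into the displayed identity yields
$$r_H = |G : H| \cdot r_G - (|G : H| - 1) = |G : H|(r_G - 1) + 1,$$
which is the claimed formula.

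I do not expect any genuinely hard step here; the work is entirely bookkeeping, and the two points that require care are both already secured by the proof of Theorem \ref{THM SUBMARK}. First, one must be sure $\Gamma$ has exactly $|G : H|$ vertices and exactly $|S| = r_G$ outgoing edges per vertex—this is immediate because the edge labels are indexed by $S$ rather than $S \cup S^{-1}$ and $\Delta$ is a genuine transversal. Second, one must know that $\Lambda$ is a spanning tree, so that its edge count is $|G : H| - 1$; this is exactly the role played by $\Delta$ being a right $S$-connected transversal containing $1_G$. The only structurally nontrivial input, namely that $\phi$ identifies $T$ with the set of non-tree edges (thereby converting the rank into a pure edge count), was likewise established in the earlier proof, so no new argument is needed.
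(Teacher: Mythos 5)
Your proof is correct and is exactly the route the paper intends: the paper gives no written argument of its own (it cites Lyndon--Schupp), but it remarks that the lemma ``follows directly from the construction in the proof of the previous theorem,'' which is precisely your count of non-tree edges in the Schreier graph, and your bookkeeping ($|\E(\Gamma)| = |G:H| \cdot r_G$, $|\E(\Lambda)| = |G:H| - 1$, and $r_H = |T| = |\E(\Gamma) \setminus \E(\Lambda)|$ via the isomorphism $\phi$) is accurate. It is also non-circular, since the proof of Theorem \ref{THM SUBMARK} nowhere invokes Lemma \ref{LEM RANK}; note that the paper's later Lemma \ref{LEM SUBEDGE} runs the logic in the opposite direction (using Lemma \ref{LEM RANK} to conclude $c$ is a bijection), which is consistent with, not in conflict with, your derivation.
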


The following lemma deals with the function $R_S$ introduced in Definition \ref{DEFN REDGE}.

\begin{lem} \label{LEM SUBEDGE}
Let $G$ be a finitely generated free group, let $S$ be a free generating set for $G$, let $H \leq G$ be a subgroup of finite index, and let $\Delta$ be a right $S$-connected transversal of the right $H$-cosets $\{H g \: g \in G\}$ with $1_G \in \Delta$. If $T$ is the generating set for $H$ constructed in the proof of Theorem \ref{THM SUBMARK}, then
$$\sum_{t \in T} (R_S(t \Delta \cup \Delta) - R_S(\Delta)) = |T| \cdot R_S(\Delta) + \sum_{s \in S} (R_S(\Delta s \cup \Delta) - R_S(\Delta)).$$
\end{lem}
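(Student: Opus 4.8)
The plan is to expand both sides coordinate-by-coordinate in $\bigoplus_{s \in S} \Z \cdot s$ and show that each reduces to the same vector $|T| \cdot R_S(\Delta) + \sum_{s_0 \in S} N_{s_0} \cdot s_0$, where $N_{s_0} = |\{\delta \in \Delta \: \delta s_0 \notin \Delta\}|$ counts the $\delta \in \Delta$ from which the positively oriented $s_0$-edge exits $\Delta$. The one geometric fact driving everything is that, since the right $S$-Cayley graph of $G$ is a tree and $\Delta$ is finite and right $S$-connected, every $g \in G \setminus \Delta$ has a \emph{unique} right $S$-neighbor lying in $\Delta$ (its entrance vertex into the subtree $\Delta$).

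First I would simplify the left-hand side. For $t \in H$ with $t \neq 1_G$ the sets $\Delta$ and $t \Delta$ are disjoint, since $\Delta$ is a coset transversal, and because left multiplication by $t$ is a label-preserving automorphism of the right $S$-Cayley graph we have $R_S(t \Delta) = R_S(\Delta)$. Splitting the edges counted in $R_S(t \Delta \cup \Delta)$ into those internal to $\Delta$, those internal to $t \Delta$, and the crossing edges $C_t$ with one endpoint in each, this gives $R_S(t \Delta \cup \Delta) - R_S(\Delta) = R_S(\Delta) + C_t$, so the left-hand side equals $|T| \cdot R_S(\Delta) + \sum_{t \in T} C_t$. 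To identify $\sum_{t \in T} C_t$ I would invoke the bijection from the proof of Theorem \ref{THM SUBMARK} between $T$ and pairs $(\delta, s)$ with $\delta \in \Delta$, $s \in S$, $\delta s \notin \Delta$, given by $(\delta, s) \mapsto c(\delta, s)$; each such pair yields the crossing $s$-edge $\{\delta, \delta s\}$ with $\delta s \in c(\delta, s) \Delta$. The crucial orientation point is that a crossing edge entering $\Delta$ along a positive generator lands in $t' \Delta$ with $t' \in T^{-1}$, not in $T$, so for $t \in T$ the component $C_t$ consists solely of outgoing positively oriented edges. Hence $\sum_{t \in T} C_t = \sum_{s_0 \in S} N_{s_0} \cdot s_0$.

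Next I would evaluate the $s_0$-coordinate of $\sum_{s \in S} (R_S(\Delta s \cup \Delta) - R_S(\Delta))$. Writing $\Delta s \cup \Delta = \Delta \sqcup E_s$ with $E_s = \Delta s \setminus \Delta$, this coordinate counts $s_0$-edges $\{g, g s_0\}$ inside $\Delta s \cup \Delta$ with at least one endpoint in $E_s$, which splits into three cases according to whether the tail $g$ and head $g s_0$ lie in $\Delta$ or in $E_s$. Using the unique-neighbor fact I would discard two of the three cases: if the tail is in $E_s$ and the head in $\Delta$, then $g$ would have two distinct $\Delta$-neighbors reached by $s$ and by $s_0$, forcing $s_0 = s^{-1} \notin S$; and if both endpoints lie in $E_s$, their respective $\Delta$-neighbors $\delta_1, \delta_2$ would either coincide (creating a triangle $g, g s_0, \delta_1$) or be distinct (closing up a cycle through $g$, $g s_0$, and a path inside $\Delta$), each impossible in a tree. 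In the surviving case $g = \delta_1 \in \Delta$ and $g s_0 = \delta_1 s_0 \in E_s$, and since $\delta_1$ is the unique $\Delta$-neighbor of $\delta_1 s_0$ this forces $s = s_0$; summing over $s \in S$, each $\delta_1$ with $\delta_1 s_0 \notin \Delta$ is counted exactly once, so the $s_0$-coordinate is $N_{s_0}$. Thus the right-hand side equals $|T| \cdot R_S(\Delta) + \sum_{s_0 \in S} N_{s_0} \cdot s_0$, matching the left-hand side. I expect the main obstacle to be the orientation bookkeeping in the second paragraph---precisely separating the crossing edges belonging to $C_t$ for $t \in T$ from those belonging to $t \in T^{-1}$---together with the acyclicity arguments needed to eliminate the degenerate edge configurations in the third paragraph.
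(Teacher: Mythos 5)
Your proposal is correct and takes essentially the same route as the paper: both decompose $R_S(t \Delta \cup \Delta) - R_S(\Delta)$ as $R_S(\Delta)$ plus a crossing-edge contribution, match the crossing edges over $t \in T$ with the external pairs $(\delta, s)$, $\delta \in \Delta$, $s \in S$, $\delta s \notin \Delta$, via the Schreier cocycle $c$, and identify that total with $\sum_{s \in S} (R_S(\Delta s \cup \Delta) - R_S(\Delta))$ using the tree structure. The only differences are bookkeeping: the paper gets the bijection $c : \Ext(\Delta \times S) \rightarrow T$ by counting (via Lemma \ref{LEM RANK}) and uses that each $t \Delta \cup \Delta$ contains a unique crossing edge, which is automatically positively oriented, whereas you reach the same tally through translation-invariance of $R_S$ and your orientation argument ruling out edges into $T^{-1}$-translates.
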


\begin{proof}
Let the functions $r: G \rightarrow \Delta$ and $c: \Delta \times G \rightarrow H$ be as defined in the proof of Theorem \ref{THM SUBMARK}. Consider the set $\Delta \times S$. We associate $(\delta, s) \in \Delta \times S$ with the edge $(\delta, \delta \cdot s)$ in the right $S$-Cayley graph of $G$. Call $(\delta, s) \in \Delta \times S$ \emph{internal} if $\delta s \in \Delta$, and call it external otherwise. Let $\Int(\Delta \times S)$ and $\Ext(\Delta \times S)$ denote the internal and external elements of $\Delta \times S$, respectively. The set $\Int(\Delta \times S)$ naturally produces a graph structure on $\Delta$. Since $\Delta$ is right $S$-connected, this graph is connected, and it is a tree since it is a subgraph of the right $S$-Cayley graph of $G$. It is well known that in any finite tree the number of edges is one less than the number of vertices \cite[I.2 Corollary 8]{BB}. So $|\Int(\Delta \times S)| = |\Delta| - 1$ and $|\Ext(\Delta \times S)| = |\Delta| \cdot |S| - |\Delta| + 1$. From the definition of $c$ it is readily observed that $c(\delta, s) = 1_G$ if and only if $(\delta, s) \in \Int(\Delta \times S)$. So by definition $T = c(\Ext(\Delta \times S))$. By the previous lemma,
$$|T| = |\Delta| (|S|-1) + 1 = |\Delta| |S| - |\Delta| + 1 = |\Ext(\Delta \times S)|.$$
Therefore $c$ is a bijection between $\Ext(\Delta \times S)$ and $T$.

For $t \in T$, let $(\delta_t, s_t) \in \Ext(\Delta \times S)$ be such that $c(\delta_t, s_t) = t$. Recall from the proof of Theorem \ref{THM SUBMARK} that $t \Delta \cup \Delta$ is right $S$-connected, but $t \Delta$ and $\Delta$ are disjoint. The unique right $S$-edge joining $\Delta$ to $t \Delta$ is $(\delta_t, \delta_t s_t)$. Therefore
$$R_S(t \Delta \cup \Delta) = 2 R_S(\Delta) + s_t.$$
We have
$$\sum_{t \in T} (R_S(t \Delta \cup \Delta) - R_S(\Delta)) = \sum_{t \in T} (R_S(\Delta) + s_t) = |T| \cdot R_S(\Delta) + \sum_{t \in T} s_t.$$
Fix $s \in S$. Since $c : \Ext(\Delta \times S) \rightarrow T$ is a bijection, we have
$$|\{t \in T \: s_t = s\}| \cdot s = |\{(\delta, s') \in \Ext(\Delta \times S) \: s' = s\}| \cdot s$$
$$= |\{\delta \in \Delta \: \delta s \not\in \Delta\}| \cdot s = R_S(\Delta s \cup \Delta) - R_S(\Delta).$$
Therefore
$$\sum_{t \in T} s_t = \sum_{s \in S} |\{t \in T \: s_t = s\}| \cdot s = \sum_{s \in S} (R_S(\Delta s \cup \Delta) - R_S(\Delta)),$$
completing the proof.
\end{proof}

It is somewhat surprising that left $S$-connected sets appear in Lemmas \ref{LEM FDECR}, \ref{LEM BIGPART}, and \ref{LEM MARKAPP2}, while right $S$-connected sets appear in Lemma \ref{LEM SHANEDGE} and Theorem \ref{THM SUBMARK}. In order to make use of these results simultaneously, we will need to work with bi-$S$-connected sets. The next lemma is tailored to this case. Later, in Lemma \ref{LEM BI}, we will see that bi-$S$-connected transversals to cosets of normal finite index subgroups always exist.

The following lemma is unique in that it requires bi-$S$-connected sets. This lemma appears to be false if bi-$S$-connected is replaced by left $S$-connected or right $S$-connected. This lemma is somewhat technical, but it is key to the proof of the main theorem. For notational simplicity, in the proof and statement of the lemma below we write $R_S(F)$ simply as $R(F)$, where $R_S(F)$ is as in Definition \ref{DEFN REDGE}.

\begin{lem} \label{LEM COMB}
Let $G$ be a finitely generated free group, let $S$ be a free generating set for $G$, and let $r = |S|$ be the rank of $G$. If $\Delta \subseteq G$ is finite, bi-$S$-connected, and contains the identity then
$$|\Delta| \cdot \sum_{s \in S} (R(s \Delta \cup \Delta) - R(\Delta)) = \sum_{s \in S} (R(\Delta s \cup \Delta) - R(\Delta)) + (|\Delta|(r - 1) + 1) \cdot R(\Delta).$$
\end{lem}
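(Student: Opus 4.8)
The plan is to prove the identity coefficient-by-coefficient. For $a \in S$ write $[F]_a$ for the coefficient of $a$ in $R_S(F)$; by Definition \ref{DEFN REDGE} this is exactly the number of right-$S$-edges of label $a$ inside $F$, i.e.\ the number of $g$ with $g, ga \in F$. I would then compare $[\,\cdot\,]_a$ on the two sides for each fixed $a$. Two structural facts are used throughout. First, left multiplication by any group element is a label-preserving automorphism of the right $S$-Cayley tree, so $R_S(s\Delta) = R_S(\Delta)$ for every $s$. Second, a bi-$S$-connected set $\Delta$ is convex (a subtree) in \emph{both} the left and the right Cayley trees; in a tree two subtrees that meet have no edges between their private parts, and two disjoint subtrees are joined by at most one bridging edge. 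Together with the fact that a subtree on $|\Delta|$ vertices has $|\Delta| - 1$ edges (applied in both trees), these give exact inclusion-exclusion formulas for every $R_S$-term appearing in the statement.

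I would first handle the left-translate side $\sum_{s}(R_S(s\Delta \cup \Delta) - R_S(\Delta))$. Using $R_S(s\Delta) = R_S(\Delta)$ and convexity, each summand collapses to $R_S(\Delta) - R_S(s\Delta \cap \Delta)$ plus the possible bridge. The intersection is the crux here: a right-$a$-edge lies in $s\Delta \cap \Delta$ precisely when its left-translate by $s^{-1}$ is again a right-$a$-edge of $\Delta$, so $\sum_{s}[s\Delta \cap \Delta]_a$ counts the left-$S$-edges inside $\Delta_a = \{g \in \Delta : ga \in \Delta\}$, the set of sources of right-$a$-edges. Since the left-induced subgraph on $\Delta_a$ is a forest, this equals $|\Delta_a|$ minus its number of left-components, so the whole left side becomes a clean expression in $[\Delta]_a$, the component count of $\Delta_a$, and the bridge contributions.

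The genuinely asymmetric side is the right-translate side $\sum_{s}(R_S(\Delta s \cup \Delta) - R_S(\Delta))$, since right multiplication is \emph{not} an automorphism of the right tree and $\Delta s$ need not be right-connected. The key simplification I would establish is that convexity forces, for every $s \neq a$, each right-$a$-edge of $\Delta s$ to already lie in $\Delta \cap \Delta s$: if $h, ha \in \Delta s$ then $hs^{-1}$ and $has^{-1} = hs^{-1}(sas^{-1})$ are two points of $\Delta$ at right-distance three, so the whole geodesic $hs^{-1}, h, ha, has^{-1}$ lies in $\Delta$, whence $h, ha \in \Delta$. Thus $[\Delta s]_a = [\Delta s \cap \Delta]_a$ for $s \neq a$ and these terms cancel, leaving only bridge/boundary edges; the single exceptional direction $s = a$ (where $sas^{-1} = a$ and the geodesic degenerates) I would treat by hand, reducing it to a count of right-$a$-paths of length two in $\Delta$.

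The main obstacle is the final matching: after these reductions I must show that the boundary and bridge counts from the right-translate side reassemble into exactly $|\Delta|$ times the component-and-bridge data from the left-translate side (minus $[\Delta]_a$), so that scaling the left side by $|\Delta|$ and adding $(|\Delta|(r-1)+1)R_S(\Delta)$ balances. The factor $|\Delta|$ is what makes this delicate, and it is exactly here that one-sided connectedness fails: left-connectedness is needed to evaluate the intersection terms, while right-connectedness (convexity) is what kills the bulk of the right-translate terms, and the two counts reconcile only when both hold. As a scaffolding for this bookkeeping I would, if needed, induct on $|\Delta|$: the element $v \in \Delta$ of maximal $S$-word length is simultaneously a leaf of the right tree and of the left tree, since its only neighbour inside $\Delta$ in either tree is the one obtained by deleting the last, respectively the first, letter of its reduced word. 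Hence $\Delta \setminus \{v\}$ is again bi-$S$-connected and contains $1_G$, and one verifies that both sides change by equal amounts when $v$ is adjoined.
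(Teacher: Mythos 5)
Your preparatory reductions are sound, and several are genuinely nice: the median/convexity facts for subtrees of a tree, the use of left translation as an automorphism of the right Cayley tree to get $R_S(s\Delta) = R_S(\Delta)$, the identification of $\sum_{s \in S} [s\Delta \cap \Delta]_a$ with the number of left $S$-edges of $\Delta_a = \Delta \cap \Delta a^{-1}$, and especially the geodesic trick with the reduced word $s a s^{-1}$ (for $s \neq a$) showing $[\Delta s]_a = [\Delta s \cap \Delta]_a$. But the proposal is not a proof, and you say so yourself: the ``final matching'' you defer is exactly the content of the lemma, and nothing in the proposal establishes it. Concretely, your left-translate analysis gives, in the coefficient of $a$, the value $(r-1)[\Delta]_a + c_a + B_a$, where $c_a$ is the number of left components of $\Delta_a$ and $B_a$ the number of $a$-labelled bridges; to finish one must prove $c_a + B_a = 1$. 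Half of this is easy ($\Delta_a$ is the intersection of two left subtrees, since right multiplication by $a^{-1}$ is an automorphism of the left tree, so $c_a \leq 1$), but the bridge statement --- that an $a$-labelled bridge between $\Delta$ and some $s\Delta$ occurs precisely when $\Delta_a = \varnothing$, and then for exactly one $s$ --- is nowhere argued; likewise the right-translate side still requires showing the total of your mixed/boundary edges equals $|\Delta| - [\Delta]_a$, which is also left open.

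The more serious defect is that your fallback induction cannot close as stated. Write $L(\Delta) = \sum_{s \in S}(R(s\Delta \cup \Delta) - R(\Delta))$ and $Q(\Delta) = \sum_{s \in S}(R(\Delta s \cup \Delta) - R(\Delta))$, let $\delta \in \Delta$ have maximal word length, $K = \Delta \setminus \{\delta\}$, $|\Delta| = k+1$, and let $w \in S$ be the label of the unique right $S$-edge joining $\delta$ to $K$. The straightforward local computations (the same case analysis as in the paper) give $R(\Delta) = R(K) + w$, $L(\Delta) = L(K) + (r-1) w$, and $Q(\Delta) = Q(K) + \sum_{s \in S} s - w$. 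Substituting, the equality of the increments of the two sides of the asserted identity is \emph{equivalent} to $L(K) = \sum_{s \in S} s + (r-1) R(K)$, which is Equation \ref{EQ2} of the paper applied to $K$. This is strictly stronger than your inductive hypothesis: the lemma for $K$ is precisely the combination of \ref{EQ2} with the double-count identity \ref{EQ1} (namely $Q(K) = k \sum_{s \in S} s - R(K)$), and neither summand can be recovered from the combination alone, essentially because the factor $|\Delta|$ changes with the induction. So ``both sides change by equal amounts'' cannot be verified from the lemma for $K$; the hypothesis must be strengthened to the two one-sided identities, and that strengthening is exactly the paper's proof: it proves Equation \ref{EQ1} by double-counting the edges of $\bigcup_{s \in S}(\Delta s \cup \Delta)$ (every such edge has an endpoint in $\Delta$, which needs only right-connectedness) and Equation \ref{EQ2} by your leaf-removal induction (which needs bi-connectedness, since $\delta$ must be a leaf in both trees and $K$ must remain bi-connected). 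The repair is therefore to abandon both the combined induction and the per-coefficient bookkeeping, and prove the two one-sided identities separately; the lemma then follows by multiplying \ref{EQ2} by $|\Delta|$ and substituting \ref{EQ1}.
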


\begin{proof}
We first claim that every finite bi-$S$-connected $\Delta \subseteq G$ containing the identity satisfies the two equations:
\begin{equation}\label{EQ1}
|\Delta| \cdot \left( \sum_{s \in S} s \right) = \sum_{s \in S} \left( R(\Delta s \cup \Delta) - R(\Delta) \right) + R(\Delta);
\end{equation}
\begin{equation}\label{EQ2}
\sum_{s \in S} \left(R(s \Delta \cup \Delta) - R(\Delta) \right) = \left(\sum_{s \in S} s \right) + (r - 1) R(\Delta).
\end{equation}
Before proving this claim, we show how it implies the statement of the lemma. By using first Equation \ref{EQ2} and then Equation \ref{EQ1} we have
$$|\Delta| \cdot \sum_{s \in S} (R(s \Delta \cup \Delta) - R(\Delta)) = |\Delta| \cdot \left( \sum_{s \in S} s \right) + |\Delta| (r - 1) R(\Delta)$$
$$= \sum_{s \in S} (R(\Delta s \cup \Delta) - R(\Delta)) + (|\Delta| (r - 1) + 1) R(\Delta),$$
as in the statement of the lemma. Thus it suffices to prove that Equations \ref{EQ1} and \ref{EQ2} hold.

Consider the set $\bigcup_{s \in S} (\Delta s \cup \Delta)$. Since the right $S$-Cayley graph of $G$ is a tree and $\Delta$ is right $S$-connected, the collection of right $S$-edges of this set is precisely $\{ (\delta, \delta s) \: \delta \in \Delta, s \in S\}$ (in other words, every edge must have an endpoint in $\Delta$). Therefore
$$R \left( \bigcup_{s \in S} (\Delta s \cup \Delta) \right) = |\Delta| \left( \sum_{s \in S} s \right).$$
Since $S$ is a free generating set and $\Delta$ is right $S$-connected, we have that for any $s \neq t \in S$ the sets $\Delta s \setminus \Delta$ and $\Delta t \setminus \Delta$ are disjoint. Therefore
$$R \left( \bigcup_{s \in S} (\Delta s \cup \Delta) \right) = R(\Delta) + \sum_{s \in S} (R(\Delta s \cup \Delta) - R(\Delta)).$$
So Equation \ref{EQ1} follows.

To establish Equation \ref{EQ2}, we use induction on the number of elements of $\Delta$. Equation \ref{EQ2} holds if $\Delta$ is a singleton since then $R(s \Delta \cup \Delta) = s$ and $R(\Delta) = 0$. Now suppose that Equation \ref{EQ2} holds whenever $|\Delta| \leq k$. Consider a bi-$S$-connected set $\Delta$ with $1_G \in \Delta$ and $|\Delta| = k + 1$. Pick $\delta \in \Delta$ of maximal $S$-word length. Since $\Delta$ is bi-$S$-connected, there must be $g, h \in \Delta$ and $u, v \in S \cup S^{-1}$ with $\delta = u h = g v$. Since $\delta$ is of maximal $S$-word length, $h$ and $g$ must be of smaller $S$-word length. So the set $K = \Delta \setminus \{\delta\}$ is bi-$S$-connected, contains the identity, and has $k$ elements. If $s \in S$ and $s \neq u, u^{-1}$, then $s \Delta \cup \Delta$ is the disjoint union of $s K \cup K$ with $\{u h, s u h\}$. Since $s \neq u, u^{-1}$, $u h$ and $s u h$ cannot be right $S$-adjacent (we cannot have $u h t = s u h$ for $t \in S \cup S^{-1}$ since $s u h$ has longer $S$-word length than $u h$ and $s \neq u$). However there are right edges $(g, g v) = (g, u h)$ and $(s g, s g v) = (s g, s u h)$. Therefore for $s \in S$ with $s \neq u, u^{-1}$ we have
$$R(s \Delta \cup \Delta) = R(s K \cup K) + 2v.$$
If $u \in S$ then $u \Delta \cup \Delta$ is the disjoint union of $u K \cup K$ with $\{u^2 h\}$. If $u^{-1} \in S$ then $u^{-1} \Delta \cup \Delta$ is the disjoint union of $u^{-1} K \cup K$ with $\{u h\}$. In either case, we have
$$R(u^{\pm 1} \Delta \cup \Delta) = R(u^{\pm 1} K \cup K) + v,$$
where $u^{\pm 1}$ is chosen to be in $S$. Clearly $R(\Delta) = R(K) + v$. So by the inductive hypothesis we have
$$\sum_{s \in S} (R(s \Delta \cup \Delta) - R(\Delta)) = \sum_{s \in S} (R(s K \cup K) - R(K)) + (r - 1) \cdot v$$
$$= \left( \sum_{s \in S} s \right) + (r - 1) R(K) + (r - 1) \cdot v = \left( \sum_{s \in S} s \right) + (r - 1) R(\Delta).$$
By induction we conclude that Equation \ref{EQ2} holds for every finite bi-$S$-connected set $\Delta \subseteq G$ containing the identity. This completes the proof.
\end{proof}

If $G \acts (X, \mu)$ is a Markov process and $H \leq G$ is of finite index, then we would like to find a single partition which is a Markov partition for both $G \acts (X, \mu)$ and $H \acts (X, \mu)$. To apply Lemma \ref{LEM BIGPART} and Theorem \ref{THM SUBMARK}, we need to find a bi-$S$-connected transversal of the right $H$-cosets in $G$. Such a transversal exists at least when $H$ is normal in $G$, as the following lemma shows.

\begin{lem} \label{LEM BI}
Let $G$ be a finitely generated free group and let $S$ be a free generating set for $G$. If $K \lhd G$ is a normal subgroup then there exists a bi-$S$-connected transversal $\Delta$ of the cosets of $K$ in $G$ with $1_G \in \Delta$.
\end{lem}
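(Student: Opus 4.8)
The plan is to write down an explicit transversal and verify bi-$S$-connectedness directly, rather than building $\Delta$ by any inductive leaf-adding procedure. Fix a total order on $S \cup S^{-1}$ and let $\prec$ denote the induced ShortLex order on $G$: declare $g \prec h$ when either $g$ has smaller $S$-word length than $h$, or they have equal $S$-word length and the reduced $S$-word representation of $g$ precedes that of $h$ lexicographically. Since there are only finitely many elements of each given word length, $\prec$ is a well-order, so every coset of $K$ has a unique $\prec$-least element. I would take $\Delta$ to be the set of these $\prec$-least representatives. Because $K$ is normal, its left and right cosets coincide, so $\Delta$ is at once a transversal of the left cosets and of the right cosets, and $1_G \in \Delta$ since it is the $\prec$-least element of $K$.

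The key reduction is that a set $\Delta \ni 1_G$ is right $S$-connected exactly when it is closed under passing to prefixes of reduced $S$-words, and left $S$-connected exactly when it is closed under passing to suffixes; hence being bi-$S$-connected is equivalent to being closed under contiguous subwords. Indeed, in the right $S$-Cayley tree the unique path between $\delta_1, \delta_2 \in \Delta$ passes only through prefixes of $\delta_1$ and of $\delta_2$ (meeting at their longest common prefix), and dually in the left $S$-Cayley tree the path passes only through suffixes. So it suffices to prove that the ShortLex transversal $\Delta$ is closed under taking prefixes and suffixes.

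For suffix-closure, suppose $w = s_1 s_2 \cdots s_m$ is the $\prec$-least element of its coset $C$, with $m \geq 1$, and put $v = s_2 \cdots s_m = s_1^{-1} w$. Then $v$ lies in $s_1^{-1} C$, and here normality is essential: $s_1^{-1} C$ is again a (left, hence two-sided) coset $C'$ of $K$, so it is one of the cosets represented in $\Delta$. If $v$ were not $\prec$-least in $C'$, choose $v' \prec v$ with $v' \in C'$ and set $w' = s_1 v' \in s_1 C' = C$. A short case analysis shows $w' \prec w$ in every case: if $|v'| < |v|$, or if $v'$ begins with $s_1^{-1}$ so that cancellation occurs, then $w'$ is strictly shorter than $w$; otherwise $w' = s_1 v'$ is reduced of the same length as $w$, agrees with $w$ in its first letter $s_1$, and satisfies $w' \prec_{\mathrm{lex}} w$. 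This contradicts the $\prec$-minimality of $w$ in $C$, so $v \in \Delta$. The mirror-image argument, using that $C s_m^{-1}$ is again a coset of $K$, gives prefix-closure.

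The main obstacle is precisely this suffix-closure step, and it is exactly where the hypothesis that $K$ is normal does all the work. For a general subgroup, the prefix-closed ShortLex transversal of the right cosets need not be suffix-closed, because $s_1^{-1} C$ is then a \emph{left} coset and need not coincide with any of the right cosets being represented; the swap $w \mapsto s_1 v'$ produces an element outside the relevant coset, and the argument collapses. Normality erases the distinction between left and right cosets and makes the symmetric argument go through. Once subword-closure of $\Delta$ is established, bi-$S$-connectedness follows immediately from the tree description above, completing the proof; note that no finiteness of the index $|G:K|$ is required, since $\prec$ is a well-order on all of $G$.
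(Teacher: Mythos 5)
Your proposal is correct and is essentially the paper's own argument: the paper likewise takes the ShortLex-least representative of each coset (using normality so that left and right cosets coincide) and verifies right/left $S$-connectedness by showing $\Delta$ is closed under deleting the last/first letter of a reduced word. The only differences are presentational—your explicit case analysis for $v' \prec v \Rightarrow s_1 v' \prec s_1 v$ is exactly the paper's order properties (ii) and (iii), which it states abstractly and leaves to the reader, and your prefix/suffix-closure reformulation of bi-$S$-connectedness is used implicitly there.
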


\begin{proof}
Fix a total ordering $\preceq$ on $S \cup S^{-1}$. We extend $\preceq$ lexicographically to an ordering $\preceq_{\mathrm{lex}}$ on $(S \cup S^{-1})$-words of the same length. Specifically, if $x = x_1 x_2 \cdots x_n$ and $y = y_1 y_2 \cdots y_n$ are two $(S \cup S^{-1})$-words of common length $n$, then $x \preceq_{\mathrm{lex}} y$ if and only if $x = y$ or $x_i \prec y_i$ for the first $i$ where $x_i \neq y_i$. For $g \in G$, let $W_S(g)$ denote the reduced $S$-word representation of $g$. We define a well ordering, $\leq$, on $G$ as follows. For $g, h \in G$ we define $g \leq h$ if and only if the following two conditions hold:
\begin{enumerate}
\item[\rm (1)] the $S$-word length of $g$ is less than or equal to the $S$-word length of $h$;
\item[\rm (2)] if $g$ and $h$ have the same $S$-word-length, then $W_S(g) \preceq_{\mathrm{lex}} W_S(h)$.
\end{enumerate}
The ordering $\leq$ of $G$ has the following properties:
\begin{enumerate}
\item[\rm (i)] $\leq$ is a well ordering, that is, every non-empty subset of $G$ has a $\leq$-least element;
\item[\rm (ii)] if $W_S(g)$ ends with $s$ then $g \leq h \Longrightarrow g s^{-1} \leq h s^{-1}$;
\item[\rm (iii)] if $W_S(h)$ does not begin with $s^{-1}$ then $g \leq h \Longrightarrow s g \leq s h$.
\end{enumerate}
We leave verification of these three properties to the reader. Next we define $\Delta$.

For a $K$-coset $g K$, define $r(g K)$ to be the $\leq$-least element of $g K$. Such an element exists by clause (i). Set $\Delta = \{ r(g K) \: g \in G \}$. Clearly $\Delta$ is a transversal of the $K$-cosets in $G$ and $1_G \in \Delta$. We claim that $\Delta$ is bi-$S$-connected.

\underline{Right $S$-connected.}  Fix $s \in S \cup S^{-1}$ and $g \in G$ which does not end with $s^{-1}$. Assume that $g s = \delta \in \Delta$. We must show that $g \in \Delta$. Set $\psi = r(K g)$. Note that $\psi \leq g$. We have $K \psi s = K g s = K \delta$, so by definition of $\Delta$ we have $\delta \leq \psi s$. Since $\delta$ ends with $s$ we have
$$g = \delta s^{-1} \leq (\psi s) s^{-1} = \psi \leq g$$
by property (ii). Therefore $g = \psi \in \Delta$.

\underline{Left $S$-connected.} Fix $s \in S \cup S^{-1}$ and $g \in G$ which does not begin with $s^{-1}$. Assume that $s g = \delta \in \Delta$. We must show that $g \in \Delta$. Set $\psi = r(g K)$ and notice that $\psi \leq g$. We have $s \psi K = s g K = \delta K$, so by definition of $\Delta$ we have $\delta \leq s \psi$. Since $g$ does not begin with $s^{-1}$ we have
$$\delta \leq s \psi \leq s g = \delta$$
by property (iii). Therefore $g = \psi \in \Delta$.
\end{proof}

We are now ready to fit the individual pieces together and prove the main theorem within the context of Markov processes and normal subgroups.

\begin{prop}
Let $G$ be a finitely generated free group acting on a probability space $(X, \mu)$. Assume that $(X, \mu)$ is a $(S, \alpha)$-Markov process where $\sH(\alpha) < \infty$. If $K \lhd G$ is of finite index then $f_K(X, \mu)$ is defined and
$$f_K(X, \mu) = |G : K| \cdot f_G(X, \mu).$$
\end{prop}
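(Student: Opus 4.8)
The plan is to produce a single partition $\beta$ that is simultaneously a Markov partition for $G \acts (X,\mu)$ and for the restricted action $K \acts (X,\mu)$, and then to compare the two f-invariant entropies by rewriting both as a common linear functional applied to purely combinatorial quantities. Since $K \lhd G$ is normal, its left and right cosets coincide, so Lemma \ref{LEM BI} furnishes a bi-$S$-connected transversal $\Delta$ of the cosets of $K$ with $1_G \in \Delta$; note that $|\Delta| = |G : K|$. Set $\beta = \Delta \cdot \alpha$. Because $\Delta$ is finite, subadditivity of Shannon entropy gives $\sH(\beta) \leq |\Delta| \cdot \sH(\alpha) < \infty$. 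Using that $\Delta$ is left $S$-connected and contains $1_G$, Lemma \ref{LEM BIGPART} shows $(X,\mu)$ is a $(S,\beta)$-Markov process; using that $\Delta$ is a right $S$-connected transversal of the right $K$-cosets, Theorem \ref{THM SUBMARK} produces a free generating set $T$ of $K$ for which $K \acts (X,\mu)$ is a $(T,\beta)$-Markov process. In particular $\beta$ is a finite Shannon entropy generating partition for both actions, so both $f_G(X,\mu)$ and $f_K(X,\mu)$ are defined.

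Next I would invoke Theorem \ref{THM MARKFINV} for each action with the common partition $\beta$, which (using the second form of $F_G$) yields
$$f_G(X,\mu) = (1 - r_G)\sH(\beta) + \sum_{s \in S} \sH(s \cdot \beta / \beta), \qquad f_K(X,\mu) = (1 - r_K)\sH(\beta) + \sum_{t \in T} \sH(t \cdot \beta / \beta),$$
where $r_G = |S|$ and $r_K = |T|$. The point now is to evaluate every term through the underlying $(S,\alpha)$-Markov structure. The sets $\Delta$, each $s\Delta \cup \Delta$ for $s \in S$, and each $t\Delta \cup \Delta$ for $t \in T$ (this last by the proof of Theorem \ref{THM SUBMARK}) are finite and right $S$-connected, so Lemma \ref{LEM SHANEDGE} lets me replace each Shannon entropy by $\sH(\alpha)$ plus $\zeta$ applied to the appropriate value of $R_S$, where $\zeta$ is the linear map of Lemma \ref{LEM SHANEDGE} associated to Definition \ref{DEFN REDGE}. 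Since $s \cdot \beta \vee \beta = (s\Delta \cup \Delta) \cdot \alpha$ and likewise $t \cdot \beta \vee \beta = (t\Delta \cup \Delta) \cdot \alpha$, the conditional entropies collapse and give
$$f_G(X,\mu) = (1-r_G)\bigl(\sH(\alpha) + \zeta(R_S(\Delta))\bigr) + \zeta\Bigl(\sum_{s \in S}\bigl(R_S(s\Delta \cup \Delta) - R_S(\Delta)\bigr)\Bigr),$$
together with the analogous expression for $f_K(X,\mu)$ involving the sum over $t \in T$.

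I would then simplify the $K$-expression via Lemma \ref{LEM SUBEDGE}, which rewrites $\sum_{t \in T}\bigl(R_S(t\Delta \cup \Delta) - R_S(\Delta)\bigr)$ as $|T| R_S(\Delta) + \sum_{s\in S}\bigl(R_S(\Delta s \cup \Delta) - R_S(\Delta)\bigr)$; since $r_K = |T|$, the term $|T| R_S(\Delta)$ combines with $(1-r_K)\zeta(R_S(\Delta))$ to leave a single $\zeta(R_S(\Delta))$, so that
$$f_K(X,\mu) = (1-r_K)\sH(\alpha) + \zeta(R_S(\Delta)) + \zeta\Bigl(\sum_{s \in S}\bigl(R_S(\Delta s \cup \Delta) - R_S(\Delta)\bigr)\Bigr).$$
Applying Lemma \ref{LEM RANK} gives $1 - r_K = |G:K|(1 - r_G)$, so the $\sH(\alpha)$ terms of $f_K$ and of $|G:K| \cdot f_G$ agree. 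After subtracting them, the desired identity $f_K(X,\mu) = |G:K| \cdot f_G(X,\mu)$ becomes the assertion that $\zeta$ applied to the two remaining $R_S$-combinations agree, and using $|\Delta| = |G:K|$ this is exactly $\zeta$ applied to the vector identity
$$|\Delta| \sum_{s \in S}\bigl(R_S(s\Delta \cup \Delta) - R_S(\Delta)\bigr) = \sum_{s \in S}\bigl(R_S(\Delta s \cup \Delta) - R_S(\Delta)\bigr) + \bigl(|\Delta|(r_G - 1) + 1\bigr) R_S(\Delta),$$
which is precisely Lemma \ref{LEM COMB}.

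The heart of the argument, and the step I expect to be most delicate, is this final bookkeeping: reconciling the left-multiplication edges that govern the $G$-side (the $s\Delta \cup \Delta$ terms) with the right-multiplication edges that govern the $K$-side (the $\Delta s \cup \Delta$ terms arising through $T$). This reconciliation is possible only because $\Delta$ is \emph{bi}-$S$-connected, and that bi-connectedness is exactly what normality of $K$ supplies through Lemma \ref{LEM BI}; for a merely right (or left) $S$-connected transversal the combinatorial identity of Lemma \ref{LEM COMB} would fail, which is why the normal case must be handled first.
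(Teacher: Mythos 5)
Your proposal is correct and takes essentially the same route as the paper's own proof: the same bi-$S$-connected transversal $\Delta$ from Lemma \ref{LEM BI}, the same common Markov partition $\beta = \Delta \cdot \alpha$ via Lemma \ref{LEM BIGPART} and Theorem \ref{THM SUBMARK}, and the same reduction through Theorem \ref{THM MARKFINV} and Lemmas \ref{LEM SHANEDGE}, \ref{LEM SUBEDGE}, and \ref{LEM RANK} to the combinatorial identity of Lemma \ref{LEM COMB}. The only difference is the order of bookkeeping (the paper assembles the purely combinatorial identity first and then converts it to Shannon entropies, whereas you expand the two entropy formulas first and reduce at the end), which is immaterial.
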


\begin{proof}
Apply Lemma \ref{LEM BI} to get a bi-$S$-connected set $\Delta$ which contains the identity and is a transversal of the $K$-cosets in $G$. Set $\beta = \Delta \cdot \alpha$. Since $\Delta$ is left $S$-connected and contains the identity, $G \acts (X, \mu)$ is a $(S, \beta)$-Markov process. Since $\Delta$ is right $S$-connected and contains the identity, $K \acts (X, \mu)$ is a $(T, \beta)$-Markov process, where $T$ is as constructed in the proof of Theorem \ref{THM SUBMARK}. Notice that $\sH(\beta) \leq |\Delta| \cdot \sH(\alpha) < \infty$ and therefore $f_K(X, \mu)$ is defined.

From Lemmas \ref{LEM RANK}, \ref{LEM SUBEDGE}, and \ref{LEM COMB} we obtain (below $r_G$ is the rank of $G$)
$$\sum_{t \in T} (R_S(t \Delta \cup \Delta) - R_S(\Delta)) = |T| \cdot R_S(\Delta) + \sum_{s \in S} (R_S(\Delta s \cup \Delta) - R_S(\Delta))$$
$$= (|G : K| (r_G - 1) + 1) \cdot R_S(\Delta) + \sum_{s \in S} (R_S(\Delta s \cup \Delta) - R_S(\Delta))$$
$$= (|\Delta| \cdot (r_G - 1) + 1) \cdot R_S(\Delta) + \sum_{s \in S} (R_S(\Delta s \cup \Delta) - R_S(\Delta))$$
$$= |\Delta| \cdot \sum_{s \in S} (R_S(s \Delta \cup \Delta) - R_S(\Delta))$$
$$= |G : K| \cdot \sum_{s \in S} (R_S(s \Delta \cup \Delta) - R_S(\Delta)).$$
From Lemma \ref{LEM SHANEDGE} it follows that
$$\sum_{t \in T} (\sH(t \cdot \beta \vee \beta) - \sH(\beta)) = |G : K| \cdot \sum_{s \in S} (\sH(s \cdot \beta \vee \beta) - \sH(\beta)).$$
So applying Theorem \ref{THM MARKFINV} to both $G \acts (X, \mu)$ and $K \acts (X, \mu)$ gives (below $r_K$ is the rank of $K$)
$$|G : K| \cdot f_G(X, \mu) = |G: K| (1 - 2 r_G) \sH(\beta) + |G : K| \cdot \sum_{s \in S} \sH(s \cdot \beta \vee \beta)$$
$$= |G: K| (1 - r_G) \sH(\beta) + |G : K| \cdot \sum_{s \in S} (\sH(s \cdot \beta \vee \beta) - \sH(\beta))$$
$$= (1 - r_K) \sH(\beta) + |G : K| \cdot \sum_{s \in S} (\sH(s \cdot \beta \vee \beta) - \sH(\beta))$$
$$= (1 - r_K) \sH(\beta) + \sum_{t \in T} (\sH(t \cdot \beta \vee \beta) - \sH(\beta)) = (1 - 2 r_K) \sH(\beta) + \sum_{t \in T} \sH(t \cdot \beta \vee \beta)$$
$$= f_K(X, \mu).$$
This completes the proof.
\end{proof}

\begin{cor} \label{COR MARKRATIO}
Let $G$ be a finitely generated free group acting on a probability space $(X, \mu)$. Assume that $(X, \mu)$ is a $(S, \alpha)$-Markov process where $\sH(\alpha) < \infty$. If $H \leq G$ is of finite index then $f_H(X, \mu)$ is defined and
$$f_H(X, \mu) = |G : H| \cdot f_G(X, \mu).$$
\end{cor}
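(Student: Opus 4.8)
The plan is to reduce the general finite-index case to the normal case already settled in the preceding proposition, using the normal core of $H$ together with Theorem \ref{THM SUBMARK}.

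First I would produce a finite-index normal subgroup of $G$ sitting inside $H$. Let $K = \bigcap_{g \in G} g H g^{-1}$ be the normal core of $H$ in $G$. Since $G$ permutes the finitely many left cosets of $H$, the kernel of the resulting homomorphism $G \rightarrow \Sym(G / H)$ is precisely $K$, so $K \lhd G$ and $|G : K| \leq |G : H|\,! < \infty$. Thus $K$ is a finite-index normal subgroup of $G$ contained in $H$, and consequently it is also a finite-index normal subgroup of $H$.

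Next I would apply the preceding proposition twice, to two different ambient free groups. Applying it to $G \acts (X, \mu)$ and the normal subgroup $K \lhd G$ shows that $f_K(X, \mu)$ is defined and $f_K(X, \mu) = |G : K| \cdot f_G(X, \mu)$. To reapply it with $H$ in the role of the ambient group, I first invoke Theorem \ref{THM SUBMARK}: since $G \acts (X, \mu)$ is a $(S, \alpha)$-Markov process with $\sH(\alpha) < \infty$ and $H$ has finite index, there is a finite right $S$-connected transversal $\Delta$ of the right $H$-cosets (so $|\Delta| = |G : H|$) with $1_G \in \Delta$ and a free generating set $T$ for $H$ such that $H \acts (X, \mu)$ is a $(T, \Delta \cdot \alpha)$-Markov process, with $\Delta \cdot \alpha$ generating for the $H$-action. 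As $H$ is a finitely generated free group (by Nielsen--Schreier and Lemma \ref{LEM RANK}) and $\sH(\Delta \cdot \alpha) \leq |\Delta| \cdot \sH(\alpha) < \infty$, the preceding proposition applies to $H \acts (X, \mu)$ with the finite-index normal subgroup $K \lhd H$, yielding that $f_H(X, \mu)$ is defined and $f_K(X, \mu) = |H : K| \cdot f_H(X, \mu)$.

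Finally I would combine the two identities. Equating the two expressions for $f_K(X, \mu)$ and using multiplicativity of the index, $|G : K| = |G : H| \cdot |H : K|$, gives
$$|G : H| \cdot |H : K| \cdot f_G(X, \mu) = |H : K| \cdot f_H(X, \mu),$$
and cancelling the finite nonzero factor $|H : K|$ yields $f_H(X, \mu) = |G : H| \cdot f_G(X, \mu)$. I expect the only genuine point requiring care to be the legitimacy of reapplying the proposition with $H$ as the ambient free group, i.e. verifying that $H \acts (X, \mu)$ is again a Markov process admitting a finite Shannon entropy Markov partition; this is exactly what Theorem \ref{THM SUBMARK} provides, and the rest is bookkeeping with indices.
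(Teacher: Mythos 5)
Your proposal is correct and is essentially identical to the paper's own proof: the paper also passes to the kernel $K$ of the homomorphism $G \rightarrow \Sym(|G:H|)$ induced by the action on left $H$-cosets (which is exactly your normal core), invokes Theorem \ref{THM SUBMARK} to get that $H \acts (X, \mu)$ is a Markov process with finite Shannon entropy Markov partition $\Delta \cdot \alpha$, and then applies the preceding proposition to both $K \lhd G$ and $K \lhd H$ before cancelling indices. The only cosmetic difference is that the paper argues $K \leq H$ via $KH = H$ rather than via the normal-core description, which changes nothing.
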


\begin{proof}
We claim that $H$ contains a subgroup of finite index which is normal in $G$. To see this, consider the left $H$-cosets $\{g H \: g \in G\}$. Clearly $G$ acts on these cosets on the left, and this induces a homomorphism from $G$ into the finite symmetric group $\Sym(|G : H|)$. Let $K$ be the kernel of this homomorphism. Then $K$ is normal in $G$ and is of finite index. Furthermore, $K H = H$ and thus $K \leq H$. Since $G \acts (X, \mu)$ is a Markov process, by Theorem \ref{THM SUBMARK} we have that $H \acts (X, \mu)$ is a Markov process as well. Furthermore, the Markov partition for $H \acts (X, \mu)$ is of the form $\Delta \cdot \alpha$ where $\Delta$ is finite and hence $\sH(\Delta \cdot \alpha) < \infty$. So now the assumptions of the previous proposition are satisfied for both $K \lhd G$ and $K \lhd H$, so we have
$$f_H(X, \mu) = \frac{1}{|H : K|} \cdot f_K(X, \mu) = \frac{|G : K|}{|H : K|} \cdot f_G(X, \mu) = |G : H| \cdot f_G(X, \mu).$$
\end{proof}

We now use Markov approximations to obtain the main theorem. We remark that the use of Markov approximations is not as direct as one might expect. We can approximate the action of $G$ by Markov processes to obtain an inequality. However, we can not approximate the action of $H$ by Markov processes in order to obtain the reverse inequality because in general $G$ does not act measure preservingly on Markov approximations to the $H$ action.

\begin{thm} \label{THM MAIN}
Let $G$ be a finitely generated free group acting on a probability space $(X, \mu)$. Let $H \leq G$ be a subgroup of finite index, and let $H$ act on $X$ by restricting the action of $G$. If the f-invariant entropy is defined for either the $G$ action or the $H$ action, then it is defined for both actions and
$$f_H(X, \mu) = |G : H| \cdot f_G(X, \mu).$$
\end{thm}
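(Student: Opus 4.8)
The plan is to reduce to a symbolic action, settle the ``defined for both'' assertion, reduce to a normal subgroup, and then prove the two inequalities $f_H(X,\mu)\ge |G:H|\cdot f_G(X,\mu)$ and $f_H(X,\mu)\le |G:H|\cdot f_G(X,\mu)$ by genuinely different mechanisms. By Lemma \ref{LEM SYMREP} I may assume the action is symbolic with $\alpha$ the canonical partition, where $\alpha$ is a fixed finite Shannon entropy generating partition. For ``defined for both'': if $\alpha$ is a finite Shannon entropy generating partition for $G\acts (X,\mu)$ and $\Delta$ is a finite transversal of the right $H$-cosets, then $\Delta\cdot\alpha$ is generating for $H\acts (X,\mu)$ (since $G=H\Delta$) with $\sH(\Delta\cdot\alpha)\le |\Delta|\cdot\sH(\alpha)<\infty$; conversely any finite Shannon entropy generating partition for $H$ is automatically one for $G$, as the $H$-invariant $\sigma$-algebra it generates sits inside the $G$-invariant one. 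Finally, exactly as in the proof of Corollary \ref{COR MARKRATIO}, passing to a finite-index normal subgroup $K\lhd G$ with $K\le H$ and applying the theorem to $K\lhd G$ and $K\lhd H$ reduces everything to the case where $H$ is normal in $G$. Assume then $H\lhd G$, fix a free generating set $S$, a bi-$S$-connected transversal $\Delta$ of the $H$-cosets with $1_G\in\Delta$ (Lemma \ref{LEM BI}), and let $T$, $\gamma=\Delta\cdot\alpha$ be the free generating set for $H$ and generating partition from Theorem \ref{THM SUBMARK}. Write $\beta_n=B_S(n)\cdot\alpha$ and $\gamma_n=\Delta\cdot\beta_n$.

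The ``$\ge$'' inequality is where I would use Markov approximations. For each $n$, Lemma \ref{LEM MARKAPP2}, applied to the finite left $S$-connected set $B_S(n)\ni 1_G$, gives a $(S,\beta_n)$-Markov approximation $\mu_n$ to $\mu$. Theorem \ref{THM MARKFINV} with Lemma \ref{LEM MARKAPP1} yields $f_G(X,\mu_n)=F_G(X,\mu_n,S,\beta_n)=F_G(X,\mu,S,\beta_n)$, and Corollary \ref{COR MARKRATIO} then gives $f_H(X,\mu_n)=|G:H|\cdot F_G(X,\mu,S,\beta_n)$, which tends to $|G:H|\cdot f_G(X,\mu)$. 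The key point is that $\mu_n$ and $\mu$ agree on the joint distribution of the coordinates in $B_S(n)$. Thus for each fixed $m$, as soon as $n$ is large enough that the finitely many sets $B_T(m)\Delta\cup t\,B_T(m)\Delta$ ($t\in T\cup T^{-1}$) lie inside $B_S(n)$, we get $F_H(X,\mu,T,B_T(m)\cdot\gamma)=F_H(X,\mu_n,T,B_T(m)\cdot\gamma)\ge f_H(X,\mu_n)=|G:H|\cdot F_G(X,\mu,S,\beta_n)$, the middle step being the monotonicity of Lemma \ref{LEM FDECR}. Letting $n\to\infty$ and then $m\to\infty$ gives $f_H(X,\mu)\ge |G:H|\cdot f_G(X,\mu)$.

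For ``$\le$'' I would not Markov-approximate the $H$-action (which is unavailable, since $G$ need not preserve such an approximation), but argue directly with the true measure $\mu$. Since $\gamma_n$ is a finite Shannon entropy generating partition for $H\acts (X,\mu)$, Lemma \ref{LEM FDECR} gives $f_H(X,\mu)=f_H(X,\mu,T,\gamma_n)\le F_H(X,\mu,T,\gamma_n)$. The crux is the purely measure-theoretic inequality
$$F_H(X,\mu,T,\gamma_n)\ \le\ |G:H|\cdot F_G(X,\mu,S,\beta_n),$$
valid for every $G$-invariant $\mu$; granting it, letting $n\to\infty$ gives $f_H(X,\mu)\le |G:H|\cdot f_G(X,\mu)$ and completes the proof. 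To prove the display I would re-run the computation of Corollary \ref{COR MARKRATIO} (equivalently, of the normal-subgroup Proposition preceding it) on the combinatorial side, using Lemmas \ref{LEM RANK}, \ref{LEM SUBEDGE}, and \ref{LEM COMB} together with the identity $r_H=|G:H|(r_G-1)+1$ to reorganize the edge counts $R_S$ into the factor $|G:H|$, but replacing each use of the Markov equality $\sH(F\cdot\alpha)=\sH(\alpha)+\zeta(R_S(F))$ of Lemma \ref{LEM SHANEDGE} by its sub-additive counterpart $\sH(F\cdot\alpha)\le \sH(\alpha)+\zeta(R_S(F))$. The latter holds for arbitrary $\mu$: its inductive proof is that of Lemma \ref{LEM SHANEDGE} with the Markov identity $\sH(f\cdot\alpha/F'\cdot\alpha)=\sH(t\cdot\alpha/\alpha)$ replaced by the inequality $\sH(f\cdot\alpha/F'\cdot\alpha)\le \sH(t\cdot\alpha/\alpha)$ supplied by clause (iii) of Lemma \ref{LEM SHAN}, since conditioning on a larger past only decreases Shannon entropy. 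Concretely, every conditional term of $F_H(X,\mu,T,\gamma_n)$ conditions on a coordinate region at least as large as the matching term of $F_G(X,\mu,S,\beta_n)$, and the coefficient $1-r_H=|G:H|(1-r_G)\le 0$ makes the leftover $\sH(\gamma_n)$-versus-$\sH(\beta_n)$ comparison point the same way.

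I expect the main obstacle to be exactly this last inequality: getting the sub-additive estimates to assemble with correct signs. In the baby case $G=\Z$ it is transparent --- both $F_G$ and $F_H$ collapse to single conditional entropies and each $H$-term conditions on a longer past than the $G$-term, so clause (iii) of Lemma \ref{LEM SHAN} gives it immediately --- but in the genuinely free case one must combine the sub-additive bounds with the combinatorial identity of Lemma \ref{LEM COMB} so that the standalone $\sH(\gamma_n)$ and $\sH(\beta_n)$ terms and the conditional terms all cooperate. The asymmetry noted before the theorem, that $G$ admits Markov approximations while $H$ does not, is precisely the reason the two inequalities are proved by different routes: Markov approximation with local agreement for ``$\ge$'', and a direct sub-additive estimate against the true measure for ``$\le$''.
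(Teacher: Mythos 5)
Your architecture is sound, and the first half of your proof is complete and is essentially the paper's own argument: the ``defined for both'' discussion, the reduction to a symbolic action via Lemma \ref{LEM SYMREP}, and the ``$\ge$'' inequality obtained from the $(S,B_S(n)\cdot\alpha)$-Markov approximations $\mu_n$ of Lemma \ref{LEM MARKAPP2}, the local agreement of $\mu_n$ with $\mu$ on the $B_S(n)$-coordinates, Theorem \ref{THM MARKFINV}, Lemma \ref{LEM MARKAPP1}, Corollary \ref{COR MARKRATIO}, and the monotonicity of Lemma \ref{LEM FDECR} (the paper phrases this with a general finite $V$ satisfying $HV=G$ and a left $S$-connected $W\supseteq TV\cup\{1_G\}$, rather than your $V=B_T(m)\Delta$, $W=B_S(n)$, but the mechanism is identical). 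Two side remarks: your reduction to normal $H$ is harmless but unnecessary, since Corollary \ref{COR MARKRATIO} already covers arbitrary finite-index subgroups; and nothing in your argument ever uses bi-$S$-connectedness, so Lemma \ref{LEM BI} never enters --- a right $S$-connected transversal and the Schreier generators $T$ of Theorem \ref{THM SUBMARK} suffice.

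The genuine gap is at the crux of your ``$\le$'' half. Your key display $F_H(X,\mu,T,\gamma_n)\le |G:H|\cdot F_G(X,\mu,S,\beta_n)$ is true (it is clause (ii) of Corollary \ref{COR MAINFIN}), but the derivation you sketch --- re-running the normal-subgroup Proposition with the sub-additive form of Lemma \ref{LEM SHANEDGE} and Lemma \ref{LEM COMB} --- does not assemble, for exactly the sign reason you flag and leave unresolved. In that Proposition the Markov identity of Lemma \ref{LEM SHANEDGE} evaluates the join entropies on \emph{both} sides of $\sum_{t}(\sH(t\cdot\beta\vee\beta)-\sH(\beta))=|G:K|\sum_s(\sH(s\cdot\beta\vee\beta)-\sH(\beta))$ with the \emph{same} $\beta=\Delta\cdot\alpha$; a sub-additive substitute bounds both sides from above, which is useless on the $S$-side, where a lower bound is needed, and Lemma \ref{LEM COMB} is a two-sided identity of edge counts with no one-sided analogue. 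The route can be salvaged, but by a different assembly in which sub-additivity hits $\sH(\gamma_n)$ only at the last step, where its coefficient is $+1$. Writing $t=c(\delta_t,s_t)$ as in Lemma \ref{LEM SUBEDGE}, so that $(\delta_t,\delta_t s_t)$ is the unique right $S$-edge joining $\Delta$ to $t\Delta$, say $\delta_t s_t=t\delta_t'$ with $\delta_t'\in\Delta$, clauses (ii) and (iii) of Lemma \ref{LEM SHAN} and translation invariance give, for every $G$-invariant $\mu$,
$$\sH(t\cdot\gamma_n/\gamma_n)\ \le\ \sH(t\delta_t'\cdot\beta_n/\delta_t\cdot\beta_n)+\sH(t\cdot\gamma_n/t\delta_t'\cdot\beta_n)\ =\ \sH(s_t\cdot\beta_n/\beta_n)+\sH(\gamma_n)-\sH(\beta_n).$$
Summing over the $r_H$ elements of $T$, using the bijection of Lemma \ref{LEM SUBEDGE} (for each $s\in S$ there are $|\Delta|-m_s$ elements $t$ with $s_t=s$, where $m_s$ is the number of internal $s$-edges of $\Delta$ and $\sum_s m_s=|\Delta|-1$), then applying $\sH(\gamma_n)\le\sH(\beta_n)+\sum_s m_s\,\sH(s\cdot\beta_n/\beta_n)$ (sub-additivity along the internal tree of $\Delta$) and $1-r_H=|G:H|(1-r_G)$ from Lemma \ref{LEM RANK}, one obtains exactly $F_H(X,\mu,T,\gamma_n)\le|G:H|\cdot F_G(X,\mu,S,\beta_n)$; Lemma \ref{LEM COMB} is never needed. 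Note this is genuinely different from the paper's mechanism, which instead takes the $(S,\xi)$-Markov approximation $\nu^*$ to the $G$-action and proves $F_H(\beta^G,\nu^*,T,\Delta\cdot\xi)\ge F_H(\beta^G,\nu,T,\Delta\cdot\xi)$, using Lemmas \ref{LEM SHANEDGE} and \ref{LEM INTUIT} for $\nu^*$ and monotonicity for $\nu$. So: your plan and your first inequality stand, but as written the second inequality rests on an unproved claim whose sketched proof would fail; it needs the rearranged estimate above (or the paper's $\nu^*$-comparison) to close.
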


\begin{proof}
If $\alpha$ is a finite Shannon entropy generating partition for $G \acts (X, \mu)$, then $\Delta \cdot \alpha$ is a finite Shannon entropy generating partition for $H \acts (X, \mu)$, where $\Delta$ is any transversal of the right $H$-cosets in $G$. Conversely, if $\alpha$ is a finite Shannon entropy generating partition for $H \acts (X, \mu)$, then it is also a finite Shannon entropy generating partition for $G \acts (X, \mu)$. Thus $f_G(X, \mu)$ is defined if and only if $f_H(X, \mu)$ is defined.

Assume that both $f_G(X, \mu)$ and $f_H(X, \mu)$ are defined. So there is a generating partition $\alpha$ for $G \acts (X, \mu)$ with $\sH(\alpha) < \infty$. Fix a free generating set $S$ for $G$.

We first show that $|G : H| \cdot f_G(X, \mu) \leq f_H(X, \mu)$. Let $T$ be any free generating set for $H$, let $V \subseteq G$ be any finite set satisfying $H V = G$, and let $W$ be any finite left $S$-connected set containing $T V \cup \{1_G\}$. Using Lemma \ref{LEM SYMREP}, fix a measure conjugacy $\phi: (X, \mu) \rightarrow (\alpha^G, \nu)$. Let $\xi$ be the canonical partition of $\alpha^G$ and recall that $\phi^{-1}(\xi) = \alpha$. By Lemma \ref{LEM MARKAPP2}, there is a $G$-invariant probability measure $\nu'$ on $\alpha^G$ which is a $(S, W \cdot \xi)$-Markov approximation to $\nu$. Then we have
\begin{center}
\begin{tabular}{l l l}
       & $|G : H| \cdot F_G(X, \mu, S, W \cdot \alpha)$ & \\
$=$    & $|G : H| \cdot F_G(\alpha^G, \nu, S, W \cdot \xi)$ & since $\phi$ is a measure conjugacy \\
$=$    & $|G : H| \cdot F_G(\alpha^G, \nu', S, W \cdot \xi)$ & by Lemma \ref{LEM MARKAPP1} \\
$=$    & $|G : H| \cdot f_G(\alpha^G, \nu')$ & by Theorem \ref{THM MARKFINV} \\
$=$    & $f_H(\alpha^G, \nu')$ & by Corollary \ref{COR MARKRATIO} \\
$\leq$ & $F_H(\alpha^G, \nu', T, V \cdot \xi)$ & since $V \cdot \xi$ is a generating partition \\
$=$    & $F_H(\alpha^G, \nu, T, V \cdot \xi)$ & since $T V \subseteq W$ and $\nu'$ and $\nu$ agree on $W \cdot \xi$ \\
$=$    & $F_H(X, \mu, T, V \cdot \alpha)$ & since $\phi$ is a measure conjugacy.
\end{tabular}
\end{center}
So $|G : H| \cdot F_G(X, \mu, S, W \cdot \alpha) \leq F_H(X, \mu, T, V, \cdot \alpha)$ whenever $T$ is any free generating set for $H$, $V$ is any finite set satisfying $H V = G$, and $W$ is any left $S$-connected set containing $T V \cup \{1_G\}$. Now for each $n \in \N$, let $W_n$ be a left $S$-connected finite set containing $T B_T(n) \Delta \cup \{1_G\}$, where $T$ is a free generating set for $H$, $B_T(n)$ is the $T$-ball of radius $n$ in $H$ centered on the identity, and $\Delta$ is a transversal of the right $H$-cosets. Then we have
$$f_H(X, \mu) = \lim_{n \rightarrow \infty} F_H(X, \mu, T, B_T(n) \Delta \cdot \alpha)$$
$$\geq \lim_{n \rightarrow \infty} |G : H| \cdot F_G(X, \mu, S, W_n \cdot \alpha) \geq |G : H| \cdot f_G(X, \mu).$$
This gives us one inequality. The reverse inequality will require more effort.

Let $\beta$ be any generating partition for $G \acts (X, \mu)$ with $\sH(\beta) < \infty$. Apply Lemma \ref{LEM SYMREP} to get a measure conjugacy $\phi: (X, \mu) \rightarrow (\beta^G, \nu)$. Let $\xi$ be the canonical partition of $\beta^G$ and recall that $\phi^{-1}(\xi) = \beta$. By Theorem \ref{THM MARKAPP}, we can let $\nu^*$ be the $(S, \xi)$-Markov approximation to $\nu$. Let $\Delta$ be a right $S$-connected transversal of the right $H$-cosets in $G$ with $1_G \in \Delta$, and let $T$ be the free generating set for $H$ constructed in the proof of Theorem \ref{THM SUBMARK}. We claim that
\begin{equation} \label{INEQ}
F_H(\beta^G, \nu^*, T, \Delta \cdot \xi) \geq F_H(\beta^G, \nu, T, \Delta \cdot \xi).
\end{equation}
We have
$$F_H(\beta^G, \nu^*, T, \Delta \cdot \xi) - F_H(\beta^G, \nu, T, \Delta \cdot \xi)$$
$$= \sH_{\nu^*}(\Delta \cdot \xi) + \sum_{t \in T} (\sH_{\nu^*}(t \Delta \cdot \xi / \Delta \cdot \xi) - \sH_{\nu^*}(\Delta \cdot \xi))$$
$$- \sH_\nu(\Delta \cdot \xi) - \sum_{t \in T}(\sH_\nu(t \Delta \cdot \xi / \Delta \cdot \xi) - \sH_\nu(\Delta \cdot \xi))$$
$$= (\sH_{\nu^*}(\Delta \cdot \xi) - \sH_\nu(\Delta \cdot \xi))$$
$$+ \sum_{t \in T} (\sH_{\nu^*}(t \Delta \cdot \xi / \Delta \cdot \xi) - \sH_{\nu^*}(\Delta \cdot \xi) - \sH_\nu(t \Delta \cdot \xi / \Delta \cdot \xi) + \sH_\nu(\Delta \cdot \xi)).$$
It will suffice to show that $\sH_{\nu^*}(\Delta \cdot \xi) - \sH_\nu(\Delta \cdot \xi) \geq 0$ and that for every $t \in T$
$$X_t = \sH_{\nu^*}(t \Delta \cdot \xi / \Delta \cdot \xi) - \sH_{\nu^*}(\Delta \cdot \xi) - \sH_\nu(t \Delta \cdot \xi / \Delta \cdot \xi) + \sH_\nu(\Delta \cdot \xi) \geq 0.$$
We prove these two inequalities in the following two paragraphs.

We will argue that $\sH_{\nu^*}(\Delta \cdot \xi) \geq \sH_\nu(\Delta \cdot \xi)$. Enumerate $\Delta$ as $\Delta = \{a_1, a_2, \ldots, a_n\}$ so that $a_1 = 1_G$ and for each $1 \leq i \leq n$ the set $K_i = \{a_1, a_2, \ldots, a_i\}$ is right $S$-connected. For each $2 \leq i \leq n$, let $b_i \in K_{i - 1}$ and $s_i \in S \cup S^{-1}$ be such that $a_i = b_i s_i$. By clauses (i) and (ii) of Lemma \ref{LEM SHAN} we have
$$\sH_{\nu^*}(\Delta \cdot \xi) - \sH_\nu(\Delta \cdot \xi)$$
$$= \sH_{\nu^*}(\xi) - \sH_\nu(\xi) + \sum_{i = 2}^n (\sH_{\nu^*}(a_i \cdot \xi / K_{i-1} \cdot \xi) - \sH_\nu(a_i \cdot \xi / K_{i-1} \cdot \xi))$$
$$= 0 + \sum_{i = 2}^n (\sH_{\nu^*}(b_i s_i \cdot \xi / K_{i-1} \cdot \xi) - \sH_\nu(b_i s_i \cdot \xi / K_{i-1} \cdot \xi))$$
$$= \sum_{i = 2}^n (\sH_{\nu^*}(s_i \cdot \xi / b_i^{-1} K_{i-1} \cdot \xi) - \sH_\nu(s_i \cdot \xi / b_i^{-1} K_{i-1} \cdot \xi))$$
$$= \sum_{i = 2}^n (\sH_{\nu^*}(s_i \cdot \xi / \xi) - \sH_\nu(s_i \cdot \xi / b_i^{-1} K_{i-1} \cdot \xi))$$
$$= \sum_{i = 2}^n (\sH_\nu(s_i \cdot \xi / \xi) - \sH_\nu(s_i \cdot \xi / b_i^{-1} K_{i-1} \cdot \xi)) \geq 0,$$
where for the second to last equality we use Lemma \ref{LEM INTUIT} and the fact that $b_i$ right $S$-separates $(a_i, K_{i-1})$ and hence $1_G$ right $S$-separates $(s_i, b_i^{-1} K_{i-1})$, and for the final inequality we use clause (iii) of Lemma \ref{LEM SHAN}.

Fix $t \in T$. We must show that $X_t \geq 0$. Let $\delta_1, \delta_2 \in \Delta$ and $s \in S$ be such that $t = \delta_1 s \delta_2^{-1}$. Recall that $t \Delta$ and $\Delta$ are disjoint but $t \Delta \cup \Delta$ is right $S$-connected. The unique right $S$-edge joining $\Delta$ to $t \Delta$ is $(\delta_1, \delta_1 s) = (\delta_1, t \delta_2)$. Let $\zeta: (\bigoplus_{s \in S} \Z \cdot s) \rightarrow \R$ be the linear extension of the map $s \mapsto \sH_{\nu^*}(s \cdot \xi / \xi)$. By Lemmas \ref{LEM SHAN} and \ref{LEM SHANEDGE} we have
$$\sH_{\nu^*}(t \Delta \cdot \xi / \Delta \cdot \xi) - \sH_{\nu^*}(\Delta \cdot \xi)$$
$$= \sH_{\nu^*}(t \Delta \cdot \xi \vee \Delta \cdot \xi) - 2 \cdot \sH_{\nu^*}(\Delta \cdot \xi)$$
$$= \sH_{\nu^*}(\xi) + \zeta(R_S(t \Delta \cup \Delta)) - 2 \cdot \sH_{\nu^*}(\xi) - 2 \cdot \zeta(R_S(\Delta))$$
$$= \zeta(s) - \sH_{\nu^*}(\xi) = \sH_{\nu^*}(s \cdot \xi / \xi) - \sH_{\nu^*}(\xi) = \sH_\nu(s \cdot \xi / \xi) - \sH_\nu(\xi).$$
Also, by Lemma \ref{LEM SHAN} we have
$$-\sH_\nu(t \Delta \cdot \xi / \Delta \cdot \xi) + \sH_\nu(\Delta \cdot \xi)$$
$$= -\sH_\nu(t \delta_2 \cdot \xi / \Delta \cdot \xi) - \sH_\nu(t \Delta \cdot \xi / t \delta_2 \cdot \xi \vee \Delta \cdot \xi) + \sH_\nu(\delta_2 \cdot \xi) + \sH_\nu(\Delta \cdot \xi / \delta_2 \cdot \xi)$$
Therefore
$$X_t = \sH_{\nu^*}(t \Delta \cdot \xi / \Delta \cdot \xi) - \sH_{\nu^*}(\Delta \cdot \xi) - \sH_\nu(t \Delta \cdot \xi / \Delta \cdot \xi) + \sH_\nu(\Delta \cdot \xi)$$
$$= \sH_\nu(s \cdot \xi / \xi) - \sH_\nu(\xi) - \sH_\nu(t \delta_2 \cdot \xi / \Delta \cdot \xi)$$
$$- \sH_\nu(t \Delta \cdot \xi / t \delta_2 \cdot \xi \vee \Delta \cdot \xi) + \sH_\nu(\delta_2 \cdot \xi) + \sH_\nu(\Delta \cdot \xi / \delta_2 \cdot \xi)$$
$$= \sH_\nu(s \cdot \xi / \xi) - \sH_\nu(t \delta_2 \cdot \xi / \Delta \cdot \xi) + \sH_\nu(\Delta \cdot \xi / \delta_2 \cdot \xi) - \sH_\nu(t \Delta \cdot \xi / t \delta_2 \cdot \xi \vee \Delta \cdot \xi)$$
$$= \sH_\nu(s \cdot \xi / \xi) - \sH_\nu(\delta_1 s \cdot \xi / \Delta \cdot \xi) + \sH_\nu(\Delta \cdot \xi / \delta_2 \cdot \xi) - \sH_\nu(t \Delta \cdot \xi / t \delta_2 \cdot \xi \vee \Delta \cdot \xi)$$
$$= \sH_\nu(s \cdot \xi / \xi) - \sH_\nu(s \cdot \xi / \delta_1^{-1} \Delta \cdot \xi) + \sH_\nu(\Delta \cdot \xi / \delta_2 \cdot \xi) - \sH_\nu(\Delta \cdot \xi / \delta_2 \cdot \xi \vee t^{-1} \Delta \cdot \xi).$$
This is non-negative by clause (iii) of Lemma \ref{LEM SHAN}, justifying the claim. Thus we conclude that Inequality \ref{INEQ} holds.

From the claim above it follows that
\begin{center}
\begin{tabular}{l l l}
    &    $|G : H| \cdot F_G(X, \mu, S, \beta)$ & \\
$=$ &    $|G : H| \cdot F_G(\beta^G, \nu, S, \xi)$ & since $\phi$ is a measure conjugacy \\
$=$ &    $|G : H| \cdot F_G(\beta^G, \nu^*, S, \xi)$ & by Lemma \ref{LEM MARKAPP1} \\
$=$ &    $|G : H| \cdot f_G(\beta^G, \nu^*)$ & by Theorem \ref{THM MARKFINV} \\
$=$ &    $f_H(\beta^G, \nu^*)$ & by Corollary \ref{COR MARKRATIO} \\
$=$ &    $F_H(\beta^G, \nu^*, T, \Delta \cdot \xi)$ & by Theorems \ref{THM SUBMARK} and \ref{THM MARKFINV}\\
$\geq$ & $F_H(\beta^G, \nu, T, \Delta \cdot \xi)$ & by Inequality \ref{INEQ} above \\
$=$ &    $F_H(X, \mu, T, \Delta \cdot \beta)$ & since $\phi$ is a measure conjugacy.
\end{tabular}
\end{center}
Thus, if $U \subseteq G$ is finite and non-empty, then by setting $\beta = U \cdot \alpha$ we obtain
$$|G : H| \cdot F_G(X, \mu, S, U \cdot \alpha) \geq F_H(X, \mu, T, \Delta U \cdot \alpha).$$
Therefore
$$|G : H| \cdot f_G(X, \mu) = \lim_{n \rightarrow \infty} |G : H| \cdot F_G(X, \mu, S, B_S(n) \cdot \alpha)$$
$$\geq \lim_{n \rightarrow \infty} F_H(X, \mu, T, \Delta B_S(n) \cdot \alpha) \geq f_H(X, \mu).$$
Thus $f_H(X, \mu) = |G : H| \cdot f_G(X, \mu)$.
\end{proof}

We now give an example to show that Theorem \ref{THM MAIN} is no longer true if one allows $H$ to have infinite index in $G$. When $|G : H| = \infty$, we take the equation $f_H(X, \mu) = |G : H| \cdot f_G(X, \mu)$ to mean that $f_G(X, \mu) = 0$ if $f_H(X, \mu)$ is finite, $f_H(X, \mu) = -\infty$ if $f_G(X, \mu) < 0$, and $f_H(X, \mu)$ is undefined if $f_G(X, \mu) > 0$ (since f-invariant entropy cannot attain the value $+ \infty$). The counter-example provided by the proposition below marks a difference between f-invariant entropy and Kolmogorov--Sinai entropy. For Kolmogorov--Sinai entropy, $h_H(X, \mu) = |G : H| \cdot h_G(X, \mu)$ whenever $H \leq G$, regardless if the index of $H$ in $G$ is finite or infinite.

\begin{prop}
There is a finitely generated free group $G$, a subgroup of infinite index $H \leq G$, and an action of $G$ on a probability space $(X, \mu)$ such that both $f_G(X, \mu)$ and $f_H(X, \mu)$ are defined but $f_H(X, \mu) \neq |G : H| \cdot f_G(X, \mu)$.
\end{prop}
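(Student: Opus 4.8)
The plan is to exhibit the counterexample using a free group acting on a finite probability space, exploiting the fact that passing to an infinite-index subgroup can change the rank while the index factor $|G : H| = \infty$ is unable to compensate.

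First I would take $G$ to be the free group of rank $2$ with free generating set $S = \{a, b\}$, and let $H = \langle a \rangle$. Then $H \cong \Z$ is free of rank $1$, and since the cosets $b^k H$ (for $k \in \Z$) are pairwise distinct, $H$ has infinite index in $G$.

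Next I would let $(X, \mu)$ consist of $n \geq 2$ points carrying the uniform probability measure, and let $G$ act on $(X, \mu)$ by measure preserving bijections (the trivial action already suffices). Let $\alpha$ be the partition of $X$ into singletons. Being the finest partition of $X$, $\alpha$ is generating for both $G \acts (X, \mu)$ and its restriction $H \acts (X, \mu)$, and $\sH(\alpha) = \log(n) < \infty$; hence $f_G(X, \mu)$ and $f_H(X, \mu)$ are both defined. I would then invoke the elementary computation recorded in Section \ref{SEC FINV}, namely that a finitely generated free group of rank $r$ acting on $n$ points with the uniform measure has f-invariant entropy $(1 - r) \log(n)$. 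Applied to $G$ this yields $f_G(X, \mu) = (1 - 2) \log(n) = -\log(n)$, and applied to $H$ (rank $1$) it yields $f_H(X, \mu) = (1 - 1) \log(n) = 0$.

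Finally I would compare these values against the stated convention for infinite index: since $f_G(X, \mu) = -\log(n) < 0$, the equation $f_H(X, \mu) = |G : H| \cdot f_G(X, \mu)$ would demand $f_H(X, \mu) = -\infty$, whereas in fact $f_H(X, \mu) = 0$. Thus $f_H(X, \mu) \neq |G : H| \cdot f_G(X, \mu)$, which proves the proposition. The only point requiring genuine care is the rank-$1$ value for $H$: one must confirm that the singleton partition remains generating after restriction and that $H$ acts by measure preserving bijections of the finite space, so that the formula $(1 - r) \log(n)$ applies with $r = 1$ rather than with the rank of $G$. This is routine, and there is no substantial obstacle; the entire phenomenon is driven by the fact that restriction to an infinite-index subgroup lowers the rank while $f_G(X,\mu)$ is strictly negative rather than zero.
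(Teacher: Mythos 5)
Your proposal is correct and takes essentially the same approach as the paper: the paper also uses the trivial action on a finite uniform probability space with the singleton (atomic) partition, computes $f = (1 - r)\cdot\mathrm{H}(\alpha)$ for a free group of rank $r$ acting trivially, and concludes that the convention for $|G:H| = \infty$ fails because $f_G(X,\mu) < 0$ while $f_H(X,\mu)$ is finite. The only difference is that you fix the concrete instance $G = F_2$, $H = \langle a \rangle \cong \Z$, whereas the paper states the argument for any non-cyclic finitely generated free $G$ and any finitely generated free subgroup $H$ of infinite index.
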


\begin{proof}
Let $(X, \mu)$ be a standard probability space with $\mu$ supported on a countable set. Let $\alpha$ be a countable measurable partition of $X$ such that each atom of $\mu$ is a member of $\alpha$. Assume that $0 < \sH(\alpha) < \infty$. This can easily be arranged by having $\mu$ be the uniform probability measure on $n$ points, in which case $\sH(\alpha) = \log(n)$. We claim that for any finitely generated free group $G$ acting trivially on $X$ (fixing every point) we have $f_G(X, \mu) = (1 - r(G)) \cdot \sH(\alpha)$, where $r(G)$ is the rank of $G$. In fact, this follows immediately from the definition of f-invariant entropy. The partition $\alpha$ is trivially generating, and ignoring sets of measure zero we have $F \cdot \alpha = \alpha$ for every non-empty $F \subseteq G$. So $F_G(X, \mu, S, F \cdot \alpha) = (1 - r(G)) \cdot \sH(\alpha)$ for every non-empty $F \subseteq G$.

Now to prove the proposition, simply pick any non-cyclic finitely generated free group $G$ and any finitely generated free subgroup $H \leq G$ of infinite index. Then $- \infty < f_G(X, \mu) = (1 - r(G)) \cdot \sH(\alpha) < 0$ and $- \infty < f_H(X, \mu) = (1 - r(H)) \cdot \sH(\alpha) \leq 0$. Thus $f_H(X, \mu) \neq |G : H| \cdot f_G(X, \mu)$.
\end{proof}

It is unknown to the author if a less trivial counter-example exists. However, we observe the following constraint.

\begin{cor}
Let $G$ be a finitely generated free group, let $H \leq G$ be a non-trivial subgroup of infinite index, and let $G$ act on a probability space $(X, \mu)$. Suppose there are infinitely many finite index subgroups of $G$ containing $H$. If $f_H(X, \mu)$ is defined, then $f_G(X, \mu)$ is defined and $f_G(X, \mu) \leq 0$.
\end{cor}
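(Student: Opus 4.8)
The plan is to combine the finite-index formula of Theorem \ref{THM MAIN} with a uniform upper bound on the f-invariant entropy of the intermediate subgroups, and then let the index tend to infinity.

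First I would dispatch the definedness claim. If $f_H(X,\mu)$ is defined, then there is a generating partition $\alpha$ for $H \acts (X,\mu)$ with $\sH(\alpha) < \infty$. Since $H \leq G$, the smallest $G$-invariant $\sigma$-algebra containing $\alpha$ contains the smallest $H$-invariant one, so $\alpha$ is also generating for $G \acts (X,\mu)$, and hence $f_G(X,\mu)$ is defined. The same reasoning shows that $\alpha$ is generating for every subgroup $K$ with $H \leq K \leq G$.

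Next I would establish the uniform bound: for every such intermediate $K$ of finite index, $f_K(X,\mu) \leq \sH(\alpha)$. Fixing a free generating set $T$ for $K$, Lemma \ref{LEM FDECR} shows the quantities defining $f_K(X,\mu,T,\alpha)$ are non-increasing in the ball radius (each ball is left $T$-connected and contains the identity), so
$$f_K(X,\mu) = f_K(X,\mu,T,\alpha) \leq F_K(X,\mu,T,\alpha) = \sH(\alpha) + \sum_{t \in T}\bigl(\sH(t\cdot\alpha/\alpha) - \sH(\alpha)\bigr).$$
Because the action is measure preserving we have $\sH(t\cdot\alpha) = \sH(\alpha)$, and conditioning only decreases entropy by clause (iii) of Lemma \ref{LEM SHAN}, so $\sH(t\cdot\alpha/\alpha) \leq \sH(\alpha)$ and each summand is $\leq 0$. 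Hence $f_K(X,\mu) \leq \sH(\alpha)$, a bound independent of $K$.

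Finally I would combine these with Theorem \ref{THM MAIN}. For each finite-index $K$ with $H \leq K \leq G$, Theorem \ref{THM MAIN} yields $f_K(X,\mu) = |G:K| \cdot f_G(X,\mu)$, so $|G:K| \cdot f_G(X,\mu) \leq \sH(\alpha)$. The group-theoretic input is that a finitely generated group has only finitely many subgroups of each fixed index; thus the hypothesis that infinitely many finite-index subgroups contain $H$ forces the indices $|G:K|$ to be unbounded. If $f_G(X,\mu)$ were positive it would be a finite positive real (f-invariant entropy never equals $+\infty$), and choosing $K$ with $|G:K|$ arbitrarily large would make $|G:K| \cdot f_G(X,\mu)$ exceed $\sH(\alpha)$, a contradiction. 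Therefore $f_G(X,\mu) \leq 0$. The main obstacle, and the only genuinely group-theoretic ingredient, is justifying that infinitely many intermediate finite-index subgroups force unbounded index; the remainder is a direct application of the finite-index formula together with the elementary monotonicity and subadditivity of Shannon entropy.
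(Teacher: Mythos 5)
Your proposal is correct and follows essentially the same route as the paper: both use the fact that a finitely generated group has only finitely many subgroups of each finite index to produce intermediate subgroups $K$ with $H \leq K \leq G$ of arbitrarily large finite index, bound $f_K(X,\mu) \leq \sH(\alpha)$ via the monotonicity of the defining sequence and elementary Shannon entropy estimates, and then apply Theorem \ref{THM MAIN} to conclude (the paper phrases the last step as $f_G(X,\mu) \leq \sH(\alpha)/N < \epsilon$ for every $\epsilon > 0$ rather than your contradiction with $f_G(X,\mu) > 0$, a purely cosmetic difference).
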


Notice that if $H \leq K$ where $K$ is normal in $G$ and $G / K$ is residually finite, then there are infinitely many finite index subgroups of $G$ containing $H$.

\begin{proof}
Assume that $f_H(X, \mu)$ is defined. This assumption is equivalent to the existence of a countable partition $\alpha$ which is generating for $H \acts (X, \mu)$ and satisfies $\sH(\alpha) < \infty$. Clearly $\alpha$ is generating for $G \acts (X, \mu)$ and thus $f_G(X, \mu)$ is defined. Fix $\epsilon > 0$. Let $N \in \N$ be such that $\frac{1}{N} \cdot \sH(\alpha) < \epsilon$. Since $G$ is finitely generated, it has only finitely many subgroups having index less than or equal to $N$. Therefore there is a subgroup $\Gamma \leq G$ such that $N < |G : \Gamma| < \infty$ and $H \leq \Gamma$. Clearly $\alpha$ is a generating partition for $\Gamma \acts (X, \mu)$ and thus $f_\Gamma(X, \mu)$ is defined and satisfies $f_\Gamma(X, \mu) \leq \sH(\alpha)$. By Theorem \ref{THM MAIN} we have
$$f_G(X, \mu) = \frac{1}{|G : \Gamma|} \cdot f_\Gamma(X, \mu) \leq \frac{1}{N} \cdot \sH(\alpha) < \epsilon.$$
Letting $\epsilon$ tend to $0$ we obtain $f_G(X, \mu) \leq 0$. 
\end{proof}

In the corollary below we clarify and isolate the two inequalities obtained within the proof of Theorem \ref{THM MAIN}. This corollary can be thought of as a finitary version of the main theorem.

\begin{cor} \label{COR MAINFIN}
Let $G$ be a finitely generated free group acting on a probability space $(X, \mu)$. Let $H \leq G$ be a subgroup of finite index, let $S$ be a free generating set for $G$, and let $\alpha$ be a generating partition for $G \acts (X, \mu)$ with $\sH(\alpha) < \infty$. Then we have the following.
\begin{enumerate}
\item[\rm (i)] If $T$ is any free generating set for $H$, $V \subseteq G$ is any finite, non-empty set satisfying $H V = G$, and $W$ is any finite left $S$-connected set containing $T V \cup \{1_G\}$, then
$$F_H(X, \mu, T, V \cdot \alpha) \geq |G : H| \cdot F_G(X, \mu, S, W \cdot \alpha).$$
\item[\rm (ii)] If $\Delta$ is a right $S$-connected transversal of the right $H$-cosets in $G$ and contains the identity, $T$ is the free generating set for $H$ constructed in the proof of Theorem \ref{THM SUBMARK}, and $U \subseteq G$ is finite and non-empty then
$$F_H(X, \mu, T, \Delta U \cdot \alpha) \leq |G : H| \cdot F_G(X, \mu, S, U \cdot \alpha).$$
\end{enumerate}
\end{cor}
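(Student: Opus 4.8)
The plan is to extract both estimates directly from the proof of Theorem~\ref{THM MAIN}, where they appear as the two finitary steps underlying the two opposite inequalities; the only task is to record them before the limits over $B_S(n)$ and $B_T(n)$ are taken. In both parts the common mechanism is identical: pass to the symbolic model of Lemma~\ref{LEM SYMREP}, replace the given measure by an appropriate Markov approximation, invoke the characterization of f-invariant entropy for Markov processes (Theorem~\ref{THM MARKFINV}) together with the Markov subgroup formula (Corollary~\ref{COR MARKRATIO}), and finally compare the Markov measure with the original one.

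For part (i) I would proceed as follows. Using Lemma~\ref{LEM SYMREP}, realize $G \acts (X,\mu)$ as $G \acts (\alpha^G, \nu)$ with canonical partition $\xi$ satisfying $\phi^{-1}(\xi) = \alpha$. The hypotheses on $W$ are precisely those of Lemma~\ref{LEM MARKAPP2}, so I obtain a $(S, W \cdot \xi)$-Markov approximation $\nu'$ to $\nu$. The heart of the argument is the chain
$$|G : H| \cdot F_G(X, \mu, S, W \cdot \alpha) = |G : H| \cdot f_G(\alpha^G, \nu') = f_H(\alpha^G, \nu') \leq F_H(X, \mu, T, V \cdot \alpha),$$
in which the first equality combines the conjugacy, Lemma~\ref{LEM MARKAPP1}, and Theorem~\ref{THM MARKFINV}; the second is Corollary~\ref{COR MARKRATIO} applied to the Markov process $(\alpha^G, \nu')$; and the last inequality holds because $V \cdot \xi$ is generating for $H$ (as $H V = G$), so by Lemma~\ref{LEM FDECR} the f-invariant entropy of $H$ is bounded above by the single rate term $F_H(\alpha^G, \nu', T, V \cdot \xi)$, which in turn agrees with $F_H(X, \mu, T, V \cdot \alpha)$ because $\nu'$ and $\nu$ match on configurations over $W \supseteq T V \cup \{1_G\}$ and $\phi$ is a measure conjugacy.

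For part (ii) I would set the ambient partition equal to $U \cdot \alpha$ and again pass to the symbolic model $\phi \colon (X,\mu) \to ((U\cdot\alpha)^G, \nu)$. This time Theorem~\ref{THM MARKAPP} supplies the $(S, \xi)$-Markov approximation $\nu^*$ to $\nu$, and I use the right $S$-connected transversal $\Delta$ together with the generating set $T$ of $H$ built in Theorem~\ref{THM SUBMARK}. Stringing together the conjugacy, Lemma~\ref{LEM MARKAPP1}, Theorem~\ref{THM MARKFINV}, Corollary~\ref{COR MARKRATIO}, and then Theorems~\ref{THM SUBMARK} and~\ref{THM MARKFINV} yields
$$|G : H| \cdot F_G(X, \mu, S, U \cdot \alpha) = F_H\bigl((U\cdot\alpha)^G, \nu^*, T, \Delta \cdot \xi\bigr),$$
and the decisive comparison $F_H((U\cdot\alpha)^G, \nu^*, T, \Delta\cdot\xi) \geq F_H((U\cdot\alpha)^G, \nu, T, \Delta\cdot\xi)$ is exactly Inequality~\ref{INEQ}. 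Transporting the right-hand side back through $\phi$ gives $F_H(X, \mu, T, \Delta U \cdot \alpha)$, as required.

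I expect the only real work to lie in Inequality~\ref{INEQ} for part~(ii): it is the single point where the Markov measure $\nu^*$ must be compared term by term with the non-Markov measure $\nu$, splitting the difference $F_H(\nu^*, T, \Delta\cdot\xi) - F_H(\nu, T, \Delta\cdot\xi)$ into one contribution from the transversal $\Delta$ and one contribution for each generator $t \in T$, and then certifying the nonnegativity of each piece via monotonicity of conditional Shannon entropy (clause~(iii) of Lemma~\ref{LEM SHAN}) and the separation lemma (Lemma~\ref{LEM INTUIT}). Part~(i), by contrast, is essentially formal once Lemma~\ref{LEM MARKAPP2} provides the approximation, its sole substantive input being that the entropy rate of a generating partition dominates the f-invariant entropy.
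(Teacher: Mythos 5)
Your proposal coincides with the paper's own treatment: the paper proves this corollary precisely by isolating the two finitary chains from the proof of Theorem \ref{THM MAIN}, and your part (i) (Lemma \ref{LEM SYMREP}, Lemma \ref{LEM MARKAPP2}, Lemma \ref{LEM MARKAPP1}, Theorem \ref{THM MARKFINV}, Corollary \ref{COR MARKRATIO}, then the bound $f_H \leq F_H(\cdot, T, V \cdot \xi)$ via generation and Lemma \ref{LEM FDECR}, and agreement of $\nu'$ with $\nu$ on $W \cdot \xi$) together with your part (ii) (take $\beta = U \cdot \alpha$, apply Theorem \ref{THM MARKAPP}, Theorems \ref{THM SUBMARK} and \ref{THM MARKFINV}, Corollary \ref{COR MARKRATIO}, and Inequality \ref{INEQ}) reproduce exactly those chains, including the correct identification of Inequality \ref{INEQ} as the sole substantive step. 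This is essentially the same proof as the paper's, with no gaps beyond what the paper itself relies on.
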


This finitary version of the main theorem provides us with new insight into Markov processes. It implies that in many circumstances the property of being a Markov process is independent of the choice of a free generating set for $G$.

\begin{cor} \label{COR MARKGEN}
Let $G$ be a finitely generated free group acting on a probability space $(X, \mu)$. Let $S_1$ and $S_2$ be two free generating sets for $G$. If $(X, \mu)$ is a $(S_1, \alpha_1)$-Markov process with $\sH(\alpha_1) < \infty$, then there exists a partition $\alpha_2$ with $\sH(\alpha_2) < \infty$ such that $(X, \mu)$ is a $(S_2, \alpha_2)$-Markov process.
\end{cor}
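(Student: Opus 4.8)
The plan is to construct the new Markov partition explicitly in the form $\alpha_2 = \Delta \cdot \alpha_1$ for a suitably chosen finite set $\Delta \ni 1_G$, and to verify the Markov condition directly via Lemma \ref{LEM INTUIT}, without ever passing to a symbolic model. Since $\sH(\Delta \cdot \alpha_1) \leq |\Delta| \cdot \sH(\alpha_1) < \infty$ and $\Delta \cdot \alpha_1$ refines the generating partition $\alpha_1$ (hence is itself generating), Lemma \ref{LEM DEFN} reduces the whole problem to checking, for every $s \in S_2 \cup S_2^{-1}$, that
$$\sH(s \cdot (\Delta \cdot \alpha_1) / \RP_{S_2}(1_G, s) \cdot (\Delta \cdot \alpha_1)) = \sH(s \cdot (\Delta \cdot \alpha_1) / (\Delta \cdot \alpha_1)).$$
Writing $A_s = \RP_{S_2}(1_G, s)$ and using $s \cdot (\Delta \cdot \alpha_1) = (s\Delta) \cdot \alpha_1$ and $A_s \cdot (\Delta \cdot \alpha_1) = (A_s \Delta) \cdot \alpha_1$, and noting $1_G \in A_s$ so that $\Delta \subseteq A_s \Delta$, I would apply Lemma \ref{LEM INTUIT} to the given $(S_1, \alpha_1)$-Markov process with $U = s\Delta$, $V = \Delta$, $W = A_s\Delta$. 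This shows the desired identity holds as soon as $\Delta$ \emph{right $S_1$-separates} $(s\Delta, A_s\Delta)$. Thus everything is reduced to a purely combinatorial separation statement about the two Cayley trees.

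Arranging this separation simultaneously for all $s$ is the step I expect to be the main obstacle, because $s\Delta$ and $A_s\Delta$ are defined through the $S_2$-geometry while the separation must take place in the $S_1$-tree. I would take $\Delta = B_{S_2}(n)$, the $S_2$-ball of radius $n$ about $1_G$, with $n$ at least the maximal $S_2$-word length of an element of $S_1 \cup S_1^{-1}$. Fix $s$ and let $B_s = \RP_{S_2}(s, 1_G)$, so that $A_s \sqcup B_s = G$ is exactly the partition obtained by deleting the $S_2$-edge $(1_G, s)$. The plan is to establish three facts: (a) $\Delta$ right $S_1$-separates $(B_s, A_s)$; (b) $s\Delta \subseteq B_s \cup \Delta$; and (c) $A_s\Delta \subseteq A_s \cup \Delta$. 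Granting these, separation of $(s\Delta, A_s\Delta)$ is immediate: given $x \in s\Delta$ and $y \in A_s\Delta$, either one of $x, y$ already lies in $\Delta$ and the $S_1$-geodesic meets $\Delta$, or else $x \in B_s$ and $y \in A_s$ and (a) forces the geodesic through $\Delta$.

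Each of (a), (b), (c) would be proved by comparing the two metrics through the elementary identity $d_{S_2}(g, s) = d_{S_2}(g, 1_G) + 1$ valid for $g \in A_s$ (with its mirror for $B_s$), which holds because any $S_2$-geodesic joining the two halves must cross the edge $(1_G, s)$. For (b): if $x = s\delta \in A_s$ then $d_{S_2}(x, s) = |\delta|_{S_2} \leq n$ forces $|x|_{S_2} = d_{S_2}(x, s) - 1 \leq n$, so $x \in \Delta$; otherwise $x \in B_s$. For (c): if $y = a\delta \in A_s\Delta$ happens to lie in $B_s$, then $d_{S_2}(y, A_s) = |y|_{S_2}$, and since $d_{S_2}(y, A_s) \leq |\delta|_{S_2} \leq n$ we get $y \in \Delta$. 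For (a): an $S_1$-geodesic from $A_s$ to $B_s$ has a first crossing edge $(g, gt)$ with $g \in A_s$, $gt \in B_s$, $t \in S_1 \cup S_1^{-1}$; then $d_{S_2}(g, gt) = |t|_{S_2} \leq n$ and the corresponding $S_2$-geodesic crosses the cut, so $d_{S_2}(g, 1_G) \leq n$, placing $g \in \Delta$ on the geodesic. With the separation established, Lemma \ref{LEM INTUIT} delivers the two equal conditional entropies above, Lemma \ref{LEM DEFN} certifies that $X$ is a $(S_2, \Delta \cdot \alpha_1)$-Markov process, and we set $\alpha_2 = \Delta \cdot \alpha_1$. (Once separation is known, one could instead conclude through the entropy characterization of Theorem \ref{THM MARKFINV}, but the route via Lemma \ref{LEM INTUIT} keeps the argument on the original space $(X, \mu)$.)
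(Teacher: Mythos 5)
Your proposal is correct, and it takes a genuinely different route from the paper. The paper proves this corollary as a quick consequence of its main-theorem machinery: it takes any finite left $S_2$-connected set $W \supseteq S_1 \cup \{1_G\}$, applies clause (i) of Corollary \ref{COR MAINFIN} with $H = G$ (the observation that the finitary subgroup inequality does not require a proper subgroup) to get $f_G(X,\mu) = F_G(X,\mu,S_1,\alpha_1) \geq F_G(X,\mu,S_2,W\cdot\alpha_1) \geq f_G(X,\mu)$, and then invokes the entropy characterization of Markov processes (Theorem \ref{THM MARKFINV}) to conclude that $W \cdot \alpha_1$ is a Markov partition for $S_2$. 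That argument is short but rests on the whole apparatus behind Corollary \ref{COR MAINFIN}: the symbolic representation (Lemma \ref{LEM SYMREP}), Markov approximations (Lemma \ref{LEM MARKAPP2}), and Corollary \ref{COR MARKRATIO}. You instead verify the Markov property directly on $(X,\mu)$ via Lemma \ref{LEM DEFN} and Lemma \ref{LEM INTUIT}, reducing everything to the separation claim that $\Delta = B_{S_2}(n)$ right $S_1$-separates $(s\Delta, \RP_{S_2}(1_G,s)\Delta)$, and your three combinatorial facts (a), (b), (c) are all correct: (b) and (c) follow from the identities $d_{S_2}(g,s) = d_{S_2}(g,1_G)+1$ on $A_s$ and $d_{S_2}(y,A_s) = |y|_{S_2}$ on $B_s$, and in (a) any $S_1$-edge $(g,gt)$ crossing the cut forces $d_{S_2}(g,1_G) \leq |t|_{S_2} - 1 < n$, so the crossing endpoint in $A_s$ lies in $\Delta$. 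In effect you are running, across two different generating sets, the same mechanism the paper uses \emph{within} one generating set in the proofs of Lemma \ref{LEM BIGPART} and Theorem \ref{THM SUBMARK}. What each approach buys: the paper's proof is nearly immediate once the main theorem is in place and produces a Markov partition from any finite left $S_2$-connected $W \supseteq S_1 \cup \{1_G\}$ (note your ball $B_{S_2}(n)$ is such a set, so the two constructions are compatible); your proof is self-contained, never leaves the original space, avoids symbolic models, Markov approximations, and f-invariant entropy altogether, and exposes the geometric reason the statement is true --- that the two tree metrics are uniformly comparable on generator-sized scales --- which also makes it the more promising template for relaxing hypotheses (e.g., the only place finite generation enters your argument is in choosing $n \geq \max_{t \in S_1} |t|_{S_2}$).
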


\begin{proof}
The key observation for this proof is that Corollary \ref{COR MAINFIN} does not require $H$ to be a proper subgroup of $G$. Let $W$ be any finite left $S_2$-connected set containing $S_1 \cup \{1_G\}$. We have
\begin{center}
\begin{tabular}{l l l}
       & $f_G(X, \mu)$ & \\
$=$    & $F_G(X, \mu, S_1, \alpha_1)$ & by Theorem \ref{THM MARKFINV} \\
$\geq$ & $F_G(X, \mu, S_2, W \cdot \alpha_1)$ & by clause (i) of Corollary \ref{COR MAINFIN} \\
$\geq$ & $f_G(X, \mu)$ & since $W \cdot \alpha_1$ is generating.
\end{tabular}
\end{center}
Therefore equality holds throughout. So it then follows from Theorem \ref{THM MARKFINV} that $G \acts (X, \mu)$ is a $(S_2, W \cdot \alpha_1)$-Markov process. Setting $\alpha_2 = W \cdot \alpha_1$ completes the proof since $\sH(\alpha_2) \leq |W| \cdot \sH(\alpha_1)$.
\end{proof}

The following corollary exhibits an interesting inequality involving f-invariant entropy. The author does not know how to obtain this inequality without applying Theorem \ref{THM MAIN}.

\begin{cor}
Let $G$ be a finitely generated free group acting on a probability space $(X, \mu)$, and let $\alpha$ be a generating partition having finite Shannon entropy. Then for any free generating set $S$ for $G$ and any finite right $S$-connected set $\Delta \subseteq G$ we have
$$f_G(X, \mu) \leq \frac{\sH(\Delta \cdot \alpha)}{|\Delta|} \leq \sH(\alpha).$$
\end{cor}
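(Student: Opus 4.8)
The plan is to prove the two inequalities separately. The right-hand inequality is just subadditivity of Shannon entropy: writing $\Delta \cdot \alpha = \bigvee_{\delta \in \Delta} \delta \cdot \alpha$ and applying clause (i) of Lemma \ref{LEM SHAN} together with $\sH(\gamma / \beta) \leq \sH(\gamma)$ (the special case of clause (iii) in which the smaller $\sigma$-algebra is trivial) repeatedly, one gets $\sH(\Delta \cdot \alpha) \leq \sum_{\delta \in \Delta} \sH(\delta \cdot \alpha) = |\Delta| \cdot \sH(\alpha)$, the last equality because the action is measure preserving. Before attacking the left-hand inequality I would record the auxiliary bound that $f_G(X, \mu) \leq \sH(\gamma)$ for every finite Shannon entropy generating partition $\gamma$ and every finitely generated free group action: indeed, using the third form $F_G(X, \mu, S, \gamma) = \sH(\gamma) + \sum_{s \in S} (\sH(s \cdot \gamma / \gamma) - \sH(\gamma)) \leq \sH(\gamma)$ since each summand is $\leq 0$, and since the terms $F_G(X, \mu, S, B_S(n) \cdot \gamma)$ are non-increasing in $n$ by Lemma \ref{LEM FDECR}, the limit $f_G(X, \mu)$ is bounded by the $n = 0$ term $F_G(X, \mu, S, \gamma)$.

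For the left-hand inequality, since the action is measure preserving and left translation is a graph automorphism of the right $S$-Cayley tree, both $\sH(\Delta \cdot \alpha)$ and right $S$-connectedness are invariant under replacing $\Delta$ by $g^{-1} \Delta$; so I may assume $1_G \in \Delta$ without changing the quotient $\sH(\Delta \cdot \alpha) / |\Delta|$. The key step, which I expect to be the main obstacle, is to realize $\Delta$ as a right $S$-connected transversal of the right cosets of a finite index subgroup $H \leq G$ with $|G : H| = |\Delta|$. I would do this by building a transitive right action of $G$ on the finite set $\Delta$: for each $s \in S$ the assignment $\delta \mapsto \delta s$ is a partial injection on $\Delta$, defined exactly when $\delta s \in \Delta$, and since the elements missing an outgoing $s$-edge and those missing an incoming $s$-edge both number $|\Delta| - e_s$ (where $e_s$ is the number of true $s$-edges inside $\Delta$), this partial injection extends to a permutation of $\Delta$. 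As $G$ is free on $S$, these permutations determine a right action $G \acts \Delta$; it is transitive because $\Delta$ is right $S$-connected, and reading the reduced word of $\delta \in \Delta$ along the tree path from $1_G$ shows $1_G \cdot \delta = \delta$. Setting $H = \Stab(1_G)$, the orbit--stabilizer bijection $Hg \mapsto 1_G \cdot g$ sends $H\delta \mapsto \delta$, so $\Delta$ is a transversal with $|G : H| = |\Delta|$, and it is right $S$-connected by hypothesis.

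With $H$ in hand the conclusion is immediate. Since $H \Delta = G$ and $\alpha$ is $G$-generating, $\Delta \cdot \alpha$ is a generating partition for $H \acts (X, \mu)$ with $\sH(\Delta \cdot \alpha) < \infty$; as $H$ is free of finite rank by Nielsen--Schreier, the auxiliary bound above applied to $H$ (with any free generating set) gives $f_H(X, \mu) \leq \sH(\Delta \cdot \alpha)$. Theorem \ref{THM MAIN} then yields $f_H(X, \mu) = |G : H| \cdot f_G(X, \mu) = |\Delta| \cdot f_G(X, \mu)$, whence $f_G(X, \mu) = \frac{1}{|\Delta|} f_H(X, \mu) \leq \frac{\sH(\Delta \cdot \alpha)}{|\Delta|}$. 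The subtle point to get right in the construction of $H$ is the left/right coset bookkeeping: one must use the right action and right cosets consistently so that the transversal produced is genuinely $\Delta$, matching the right $S$-connected transversals used in Theorem \ref{THM MAIN}.
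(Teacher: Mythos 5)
Your proposal is correct and follows essentially the same route as the paper: after normalizing $1_G \in \Delta$, you build a transitive right action of $G$ on $\Delta$ extending the partial right-multiplication maps, take $H = \Stab(1_G)$ so that $\Delta$ becomes a right $S$-connected transversal of the right $H$-cosets, and then combine Theorem \ref{THM MAIN} with the bound $f_H(X,\mu) \leq F_H(X,\mu,T,\Delta \cdot \alpha) \leq \sH(\Delta \cdot \alpha)$. The only cosmetic difference is that the paper completes each partial injection $\delta \mapsto \delta s$ to a permutation explicitly (wrapping around along $s$-chains) where you invoke a counting argument, which changes nothing of substance.
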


\begin{proof}
Fix a finite right $S$-connected $\Delta \subseteq G$. Since the action of $G$ is measure preserving, we can replace $\Delta$ with $\delta^{-1} \Delta$ if necessary in order to have $1_G \in \Delta$. We will define a right action, $*$, of $G$ on $\Delta$ as follows. Since $G$ is freely generated by $S$, it suffices to define how each $s \in S$ acts on $\Delta$. So fix $s \in S$ and $\delta \in \Delta$. If $\delta s \in \Delta$, then define $\delta * s = \delta s$. If $\delta s \not\in \Delta$, then let $k \geq 0$ be maximal with $\delta s^{-k} \in \Delta$ and then define $\delta * s = \delta s^{-k}$. This defines the right action of $G$ on $\Delta$. Since $\Delta$ is right $S$-connected and we defined $\delta * s = \delta s$ whenever $\delta s \in \Delta$, it follows that the action of $G$ on $\Delta$ is transitive. Let $H = \{g \in G \: 1_G * g = 1_G\}$ be the stabilizer of $1_G \in \Delta$. Then $H$ is a finite index subgroup of $G$ since $\Delta$ is finite. Furthermore, if $h \in H$ and $g \in G$ then
$$1_G * h g = (1_G * h) * g = 1_G * g.$$
Thus each point of $\Delta$ corresponds to a right $H$-coset. If $\delta \in \Delta$ then $1_G * \delta = \delta$ since $\Delta$ is right $S$-connected. Hence $\Delta$ is a right $S$-connected transversal of the right $H$-cosets in $G$. If $\alpha$ is a finite Shannon entropy generating partition for $G \acts (X, \mu)$ then $\Delta \cdot \alpha$ is a generating partition for $H \acts (X, \mu)$. So for any free generating set $T$ for $H$ we have
$$|\Delta| \cdot f_G(X, \mu) = |G : H| \cdot f_G(X, \mu) = f_H(X, \mu) \leq F_H(X, \mu, T, \Delta \cdot \alpha)$$
$$= \sH(\Delta \cdot \alpha) + \sum_{t \in T}(\sH(t \Delta \cdot \alpha/ \Delta \cdot \alpha) - \sH(\Delta \cdot \alpha)) \leq \sH(\Delta \cdot \alpha),$$
This establishes the first inequality. For the second inequality, it is easy to see that $\sH(\Delta \cdot \alpha) \leq |\Delta| \cdot \sH(\alpha)$.
\end{proof}

\section{Virtually free groups and virtual measure conjugacy} \label{SEC VIRT}

Our main theorem allows us to define f-invariant entropy for actions of finitely generated virtually free groups and also allows us to define a numerical invariant for virtual measure conjugacy. Recall that a group is \emph{virtually free} if it contains a free subgroup of finite index. Similarly, a group is \emph{virtually $\Z$} if it contains $\Z$ as a subgroup of finite index. To simplify discussion within this section, we will use the term ``virtually free'' to always mean virtually free but not virtually $\Z$.

\begin{cor} \label{COR NEWFINV}
Let $\Gamma$ be a finitely generated virtually free group acting on a probability space $(X, \mu)$. Assume that there is a generating partition for this action having finite Shannon entropy. If $G, H \leq \Gamma$ are finite index free subgroups, then $f_G(X, \mu)$ and $f_H(X, \mu)$ are defined and
$$\frac{1}{|\Gamma : G|} \cdot f_G(X, \mu) = \frac{1}{|\Gamma : H|} \cdot f_H(X, \mu).$$
Furthermore, if $\Gamma$ is itself free then the above common value is $f_\Gamma(X, \mu)$.
\end{cor}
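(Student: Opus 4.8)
The plan is to reduce the entire statement to Theorem \ref{THM MAIN} by passing to the intersection $G \cap H$, which will serve as a common finite-index free subgroup of both $G$ and $H$. Two preliminary facts must be checked first: that $f_G(X,\mu)$ and $f_H(X,\mu)$ are genuinely defined, and that $G \cap H$ has all the properties needed to apply the theorem on both sides.

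To see that $f_G(X,\mu)$ is defined, let $\alpha$ be a generating partition for $\Gamma \acts (X,\mu)$ with $\sH(\alpha) < \infty$, as supplied by hypothesis. Writing $\Gamma = \bigsqcup_{i=1}^{n} G t_i$ as a disjoint union of right $G$-cosets and setting $\Delta = \{t_1, \ldots, t_n\}$, one checks directly that
$$\bigvee_{g \in G} g \cdot (\Delta \cdot \alpha) = \bigvee_{g \in G, \, t \in \Delta} (g t) \cdot \alpha = \bigvee_{\gamma \in \Gamma} \gamma \cdot \alpha,$$
so that $\Delta \cdot \alpha$ is a generating partition for $G \acts (X,\mu)$; moreover $\sH(\Delta \cdot \alpha) \leq n \cdot \sH(\alpha) < \infty$. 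Hence $f_G(X,\mu)$ is defined, and the identical argument applies to $H$.

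Next I would analyze $G \cap H$. Since $|\Gamma : G \cap H| \leq |\Gamma : G| \cdot |\Gamma : H| < \infty$, the subgroup $G \cap H$ has finite index in $\Gamma$, and hence finite index in each of $G$ and $H$. As a subgroup of the free group $G$ it is free by the Nielsen--Schreier theorem, and having finite index in the finitely generated free group $G$ it is finitely generated (its rank is given by Lemma \ref{LEM RANK}). Thus $G \cap H$ is a finite-index free subgroup of both $G$ and $H$, so Theorem \ref{THM MAIN} applies to the pairs $(G, G \cap H)$ and $(H, G \cap H)$. Since $f_G(X,\mu)$ and $f_H(X,\mu)$ are already known to be defined, the theorem gives that $f_{G \cap H}(X,\mu)$ is defined and
$$f_{G \cap H}(X, \mu) = |G : G \cap H| \cdot f_G(X, \mu) = |H : G \cap H| \cdot f_H(X, \mu).$$
Now I invoke multiplicativity of the index in the towers $\Gamma \geq G \geq G \cap H$ and $\Gamma \geq H \geq G \cap H$, namely $|\Gamma : G \cap H| = |\Gamma : G| \cdot |G : G \cap H| = |\Gamma : H| \cdot |H : G \cap H|$. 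Dividing the displayed equation by $|\Gamma : G \cap H|$ shows that both $\frac{1}{|\Gamma : G|} \cdot f_G(X,\mu)$ and $\frac{1}{|\Gamma : H|} \cdot f_H(X,\mu)$ equal the common value $\frac{1}{|\Gamma : G \cap H|} \cdot f_{G \cap H}(X,\mu)$, which is precisely the asserted equality. For the final sentence, if $\Gamma$ is itself free then $\Gamma$ is a finite-index (index one) free subgroup of itself, hence a legitimate choice for the role of $G$; taking $G = \Gamma$ makes the common value $\frac{1}{|\Gamma : \Gamma|} \cdot f_\Gamma(X,\mu) = f_\Gamma(X,\mu)$.

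There is no deep obstacle here, since Theorem \ref{THM MAIN} performs all the substantive work. The only points requiring genuine care are the two verifications at the ends: that $G \cap H$ simultaneously inherits freeness, finite generation, and finite index in both $G$ and $H$ (so that the theorem truly applies on each side), and that a finite-entropy generating partition for $\Gamma$ produces one for each finite-index subgroup (so that the relevant f-invariant entropies are defined in the first place, and the equalities above are not vacuous). Both are routine, with the index bookkeeping in the final step being the only computation.
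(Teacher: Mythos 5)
Your proposal is correct and follows essentially the same route as the paper's own proof: it establishes that $f_G(X,\mu)$ and $f_H(X,\mu)$ are defined by pushing the finite Shannon entropy generating partition for $\Gamma$ through a coset transversal, then applies Theorem \ref{THM MAIN} twice to the common finite-index free subgroup $K = G \cap H$ and concludes via multiplicativity of the index. The only cosmetic differences are that you take $G = \Gamma$ for the final claim where the paper takes $H = \Gamma$, and you spell out routine facts (Nielsen--Schreier, the bound $|\Gamma : G \cap H| \leq |\Gamma : G| \cdot |\Gamma : H|$) that the paper leaves implicit or cites.
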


\begin{proof}
Since $\Gamma$ is finitely generated and $G$ and $H$ are of finite index in $\Gamma$, we have that $G$ and $H$ are also finitely generated (\cite[Corollary IV.B.24]{Ha00}). Also, since they have finite index in $\Gamma$ and $\Gamma \acts (X, \mu)$ admits a finite Shannon entropy generating partition, the actions $G \acts (X, \mu)$ and $H \acts (X, \mu)$ also admit finite Shannon entropy generating partitions (by the same argument appearing in the first paragraph of the proof of Theorem \ref{THM MAIN}). Thus $f_G(X, \mu)$ and $f_H(X, \mu)$ are defined.

Consider the subgroup $K = G \cap H$. Clearly $K$ is a finite index subgroup of both $G$ and $H$. It follows from Theorem \ref{THM MAIN} that
$$\frac{1}{|\Gamma : G|} \cdot f_G(X, \mu) = \frac{1}{|\Gamma : G| \cdot |G : K|} \cdot f_K(X, \mu) = \frac{1}{|\Gamma : K|} \cdot f_K(X, \mu)$$
$$= \frac{1}{|\Gamma : H| \cdot |H : K|} \cdot f_K(X, \mu) = \frac{1}{|\Gamma : H|} \cdot f_H(X, \mu).$$
If $\Gamma$ is itself free then one can take $H = \Gamma$ to obtain
$$\frac{1}{|\Gamma : G|} \cdot f_G(X, \mu) = \frac{1}{|\Gamma : H|} \cdot f_H(X, \mu) = f_\Gamma(X, \mu).$$
This completes the proof.
\end{proof}

The previous corollary now allows us to extend the definition of f-invariant entropy to actions of finitely generated virtually free groups.

\begin{defn}
Let $\Gamma$ be a finitely generated virtually free group, and let $\Gamma$ act on a probability space $(X, \mu)$. If there is a generating partition for this action with finite Shannon entropy, then we define the \emph{f-invariant entropy of $\Gamma \acts (X, \mu)$} to be
$$f_\Gamma(X, \mu) = \frac{1}{|\Gamma : G|} \cdot f_G(X, \mu),$$
where $G$ is any free subgroup of $\Gamma$ of finite index, and the action of $G$ on $X$ is the restriction of the $\Gamma$ action. If there is no generating partition for this action with finite Shannon entropy, then the f-invariant entropy of $\Gamma \acts (X, \mu)$ is undefined.
\end{defn}

We point out that since f-invariant entropy is a measure conjugacy invariant for actions of finitely generated free groups, it is also a measure conjugacy invariant for actions of finitely generated virtually free groups. Also notice that by the previous corollary, $f_\Gamma(X, \mu)$ does not depend on the free subgroup of finite index chosen.

If $(K^\Gamma, \mu^\Gamma)$ is a Bernoulli shift over a finitely generated virtually free group $\Gamma$, then the f-invariant entropy of $\Gamma \acts (K^\Gamma, \mu^\Gamma)$ is
$$\sum_{k \in K} - \mu(k) \cdot \log(\mu(k))$$
provided that the support of $\mu$ is countable and the above sum is finite (this follows easily from the validity of this formula when $\Gamma$ is in fact free, as discussed by Bowen in \cite{B10a}). If the support of $\mu$ is not countable or the sum above is not finite, then the f-invariant entropy of this action is undefined (since in this case there is no generating partition having finite Shannon entropy, as proved by Kerr--Li in \cite{KL11b}). Moreover, it follows from \cite{B10b} and \cite{KL11b} that if $(K^\Gamma, \mu^\Gamma)$ and $(M^\Gamma, \lambda^\Gamma)$ are two Bernoulli shifts over a finitely generated virtually free group $\Gamma$, then they are measurably conjugate if and only if
$$\sH(\mu) = \sH(\lambda),$$
where $\sH(\mu)$ is defined to be $\sum_{k \in K} - \mu(k) \cdot \log(\mu(k))$ if the support of $\mu$ is countable, and is defined to be $\infty$ otherwise, and $\sH(\lambda)$ is defined similarly. So it immediately follows that for Bernoulli shifts over finitely generated virtually free groups for which f-invariant entropy is defined, the f-invariant entropy is a complete invariant for measure conjugacy. We also mention that many properties of the original f-invariant entropy immediately carry over to this new f-invariant entropy, such as the Abramov--Rohlin formula and (under a few assumptions) Juzvinskii's addition formula (see \cite{B10d} and \cite{BG}).

\begin{prob}
Let $G$ be a locally compact group and let $m$ be a Haar measure on $G$. Suppose that $\Gamma_1$ and $\Gamma_2$ are finitely generated free groups and are lattices in $G$. Let $G$ act measure preservingly on a standard probability space $(X, \mu)$. Is $f_{\Gamma_1}(X, \mu)$ defined if and only if $f_{\Gamma_2}(X, \mu)$ is defined? Are these f-invariant entropies related by their covolumes:
$$\frac{1}{m(\Gamma_1 \backslash G)} \cdot f_{\Gamma_1}(X, \mu) = \frac{1}{m(\Gamma_2 \backslash G)} \cdot f_{\Gamma_2}(X, \mu)?$$
\end{prob}

The above questions may only have positive answers under additional assumptions on $G$, such as $G$ being unimodular or a Lie group. An affirmative answer to these questions would allow f-invariant entropy to be extended to actions of locally compact groups which contain a free group lattice.

Now we turn to defining a numerical invariant for virtual measure conjugacy of actions of finitely generated virtually free groups. Recall that two probability measure preserving actions $G \acts (X, \mu)$ and $H \acts (Y, \nu)$ are \emph{virtually measurably conjugate} if there are subgroups of finite index $G' \leq G$ and $H' \leq H$ such that the restricted actions $G' \acts (X, \mu)$ and $H' \acts (Y, \nu)$ are measurably conjugate, meaning that there is a group isomorphism $\psi: G' \rightarrow H'$ and a measure space isomorphism $\phi: X \rightarrow Y$ such that $\phi(g' \cdot x) = \psi(g') \cdot \phi(x)$ for every $g' \in G'$ and $\mu$-almost every $x \in X$.

\begin{cor} \label{COR VFINV}
For $i = 1, 2$, let $\Gamma_i$ be a finitely generated virtually free group acting on a probability space $(X_i, \mu_i)$. Assume that for each $i$ there is a finite Shannon entropy generating partition for $\Gamma_i \acts (X_i, \mu_i)$. If $\Gamma_1 \acts (X_1, \mu_1)$ is virtually measurably conjugate to $\Gamma_2 \acts (X_2, \mu_2)$, then for any free subgroups of finite index $G_1 \leq \Gamma_1$ and $G_2 \leq \Gamma_2$, we have
$$\frac{1}{r(G_1) - 1} \cdot f_{G_1}(X_1, \mu_1) = \frac{1}{r(G_2) - 1} \cdot f_{G_2}(X_2, \mu_2),$$
where $r(G_i)$ is the rank of $G_i$.
\end{cor}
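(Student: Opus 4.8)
The plan is to prove the corollary by establishing that the normalized quantity $\frac{1}{r(G)-1}f_G(X,\mu)$ enjoys two independence properties: it does not depend on which free finite-index subgroup $G$ of a fixed virtually free group is used, and it is carried across the isomorphism witnessing a virtual measure conjugacy. To see the first, fix $i$ and let $G,H\le\Gamma_i$ be free subgroups of finite index, and set $K=G\cap H$; being a subgroup of the free group $G$, the group $K$ is free, and it has finite index in both $G$ and $H$. Theorem \ref{THM MAIN} gives $f_K(X_i,\mu_i)=|G:K|\cdot f_G(X_i,\mu_i)=|H:K|\cdot f_H(X_i,\mu_i)$, while Lemma \ref{LEM RANK} gives $r(K)-1=|G:K|(r(G)-1)=|H:K|(r(H)-1)$. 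Dividing the first chain by the second yields
$$\frac{f_G(X_i,\mu_i)}{r(G)-1}=\frac{f_K(X_i,\mu_i)}{r(K)-1}=\frac{f_H(X_i,\mu_i)}{r(H)-1}.$$
Since $\Gamma_i$ is virtually free but not virtually $\Z$, no free finite-index subgroup can be cyclic, so every such subgroup has rank at least $2$ and the denominators above are nonzero. Hence $\frac{1}{r(G_i)-1}f_{G_i}(X_i,\mu_i)$ is a well-defined quantity, independent of the free finite-index subgroup $G_i$ chosen.

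Next I would unwind the virtual measure conjugacy and transfer entropy through it. By definition there are finite-index subgroups $\Gamma_1'\le\Gamma_1$ and $\Gamma_2'\le\Gamma_2$, a group isomorphism $\psi\colon\Gamma_1'\to\Gamma_2'$, and a measure space isomorphism $\phi\colon X_1\to X_2$ with $\phi(g\cdot x)=\psi(g)\cdot\phi(x)$ for all $g\in\Gamma_1'$ and $\mu_1$-almost every $x$. Choosing a free finite-index subgroup $G$ of $\Gamma_1$ and setting $G_1'=G\cap\Gamma_1'$ gives a free subgroup (a subgroup of $G$) of finite index in $\Gamma_1$; put $G_2'=\psi(G_1')$, a free finite-index subgroup of $\Gamma_2$ with $r(G_2')=r(G_1')$. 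Since $G_1'$ has finite index in $\Gamma_1$, the action $G_1'\acts(X_1,\mu_1)$ admits a finite Shannon entropy generating partition by the argument in the first paragraph of the proof of Theorem \ref{THM MAIN}, and similarly for $G_2'\acts(X_2,\mu_2)$. Now let $S$ freely generate $G_1'$ and let $\alpha$ be a finite Shannon entropy generating partition for $G_1'\acts(X_1,\mu_1)$. Then $\psi(S)$ freely generates $G_2'$, and $\phi(\alpha)$ is a generating partition for $G_2'\acts(X_2,\mu_2)$ with $\sH(\phi(\alpha))=\sH(\alpha)<\infty$. Because $\phi$ intertwines the actions through $\psi$ and $\psi(B_S(n))=B_{\psi(S)}(n)$, for every $n$ we have $F_{G_1'}(X_1,\mu_1,S,B_S(n)\cdot\alpha)=F_{G_2'}(X_2,\mu_2,\psi(S),B_{\psi(S)}(n)\cdot\phi(\alpha))$; letting $n\to\infty$ and invoking Bowen's theorem that $f_G$ is independent of the free generating set and generating partition yields $f_{G_1'}(X_1,\mu_1)=f_{G_2'}(X_2,\mu_2)$.

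Finally I would assemble the chain
$$\frac{f_{G_1}(X_1,\mu_1)}{r(G_1)-1}=\frac{f_{G_1'}(X_1,\mu_1)}{r(G_1')-1}=\frac{f_{G_2'}(X_2,\mu_2)}{r(G_2')-1}=\frac{f_{G_2}(X_2,\mu_2)}{r(G_2)-1},$$
using the internal independence of the first paragraph at the two ends and the equalities $r(G_1')=r(G_2')$ and $f_{G_1'}=f_{G_2'}$ in the middle; this is exactly the asserted identity. The main obstacle is the transfer across the abstract isomorphism $\psi$: one must be certain that f-invariant entropy is genuinely an invariant of the action rather than something tied to a fixed presentation of the group as a free group. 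This is precisely what Bowen's generating-set independence supplies once a free generating set is pushed forward along $\psi$, after which the bookkeeping with ranks via Lemma \ref{LEM RANK} is routine.
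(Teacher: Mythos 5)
Your proposal is correct and takes essentially the same route as the paper: the paper's proof also restricts the conjugacy $\phi$ along $\psi$ to a free finite-index subgroup ($K_2 = \psi(K_1)$), intersects with the given $G_i$ to form $N_i = G_i \cap K_i$, and chains the identity together using Theorem \ref{THM MAIN} and Lemma \ref{LEM RANK}, which is exactly your combination of the internal-independence step and the transfer step, merely organized as one computation rather than two lemmas. Your explicit verification that f-invariant entropy transfers across the abstract isomorphism $\psi$ (pushing forward a free generating set and generating partition, then invoking Bowen's independence theorem) is a point the paper's proof leaves implicit, and is a welcome addition; the only blemish is your parenthetical claim that every free finite-index subgroup has rank at least $2$, which fails for finite $\Gamma_i$ (where the trivial subgroup has rank $0$), though the conclusion you need --- that $r(G_i) - 1 \neq 0$ --- still holds whenever $\Gamma_i$ is not virtually $\Z$.
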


\begin{proof}
Since the actions are virtually measurably conjugate, there exist subgroups of finite index $H_1 \leq \Gamma_1$ and $H_2 \leq \Gamma_2$, a group isomorphism $\psi: H_1 \rightarrow H_2$, and a measure space isomorphism $\phi: (X_1, \mu_1) \rightarrow (X_2, \mu_2)$ with $\phi(h \cdot x) = \psi(h) \cdot \phi(x)$ for every $h \in H_1$ and $\mu_1$-almost every $x \in X_1$. Since $H_i$ is a finite index subgroup of $\Gamma_i$, $H_i$ is finitely generated and virtually free (\cite[Corollary IV.B.24]{Ha00}). Let $K_1$ be a free subgroup of $H_1$ of finite index, and set $K_2 = \psi(K_1)$. Then $K_2$ is a free subgroup of $H_2$ of finite index and $K_1 \acts (X_1, \mu_1)$ is measurably conjugate to $K_2 \acts (X_2, \mu_2)$. Now $K_i, G_i \leq \Gamma_i$ each have finite index, so $N_i = G_i \cap K_i$ is of finite index in $\Gamma_i$ as well. By Theorem \ref{THM MAIN} and Lemma \ref{LEM RANK} we have
$$\frac{1}{r(G_1) - 1} \cdot f_{G_1}(X_1, \mu_1) = \frac{1}{(r(G_1) - 1)|G_1 : N_1|} \cdot f_{N_1}(X_1, \mu_1)$$
$$= \frac{1}{r(N_1) - 1} \cdot f_{N_1}(X_1, \mu_1) = \frac{|K_1 : N_1|}{r(N_1) - 1} \cdot f_{K_1}(X_1, \mu_1)$$
$$= \frac{1}{r(K_1) - 1} \cdot f_{K_1}(X_1, \mu_1) = \frac{1}{r(K_2) - 1} \cdot f_{K_2}(X_2, \mu_2)$$
$$= \frac{|K_2 : N_2|}{r(N_2) - 1} \cdot f_{K_2}(X_2, \mu_2) = \frac{1}{r(N_2) - 1} \cdot f_{N_2}(X_2, \mu_2)$$
$$= \frac{1}{(r(G_2) - 1)|G_2 : N_2|} \cdot f_{N_2}(X_2, \mu_2) = \frac{1}{r(G_2) - 1} \cdot f_{G_2}(X_2, \mu_2).$$
\end{proof}

The previous corollary allows us to define a numerical invariant for virtual measure conjugacy among actions of finitely generated virtually free groups.

\begin{defn}
Let $\Gamma$ be a finitely generated virtually free group acting measure preservingly on a standard probability space $(X, \mu)$. If there is a generating partition having finite Shannon entropy, then the \emph{virtual f-invariant entropy of $\Gamma \acts (X, \mu)$} is defined as
$$\vf_\Gamma(X, \mu) = \frac{1}{r(G) - 1} \cdot f_G(X, \mu),$$
where $G$ is any free subgroup of finite index, $r(G)$ is the rank of $G$, and $G$ acts on $(X, \mu)$ by restricting the $\Gamma$ action. If there is no generating partition with finite Shannon entropy, then the virtual f-invariant entropy of this action is undefined.
\end{defn}

The corollary above shows that $\vf_\Gamma(X, \mu)$ does not depend on the free subgroup of finite index chosen (use $\Gamma_1 = \Gamma_2$ in that corollary) and is an invariant for virtual measure conjugacy.

We remark that $\vf_\Gamma(X, \mu)$ can be computed from $f_\Gamma(X, \mu)$ without choosing a free subgroup of finite index. In \cite{KPS73}, Karrass--Pietrowski--Solitar prove that any finitely generated virtually free group $\Gamma$ can be represented as the fundamental group of a finite graph of groups in which all vertex groups are finite. Furthermore, they showed that if $G \leq \Gamma$ is a free subgroup of finite index then the rank of $G$, $r(G)$, is given by
$$r(G) = 1 + |\Gamma : G| \cdot \left(\frac{1}{e_1} + \frac{1}{e_2} + \cdots + \frac{1}{e_k} - \frac{1}{v_1} - \frac{1}{v_2} - \cdots - \frac{1}{v_n} \right),$$
where $e_1, \ldots, e_k$, and $v_1, \ldots, v_n$ are the orders of the edge groups and vertex groups, respectively, corresponding to the representation of $\Gamma$ as the fundamental group of a finite graph of finite groups. Therefore
$$\vf_\Gamma(X, \mu) = \frac{1}{r(G) - 1} \cdot f_G(X, \mu) = \frac{|\Gamma : G|}{r(G) - 1} \cdot f_\Gamma(X, \mu)$$
$$= \left(\frac{1}{e_1} + \cdots + \frac{1}{e_k} - \frac{1}{v_1} - \cdots - \frac{1}{v_n} \right)^{-1} \cdot f_\Gamma(X, \mu).$$
However, it is unclear if there is a formula for $f_\Gamma(X, \mu)$ which avoids choosing a free subgroup of finite index.

\begin{prob}
For finitely generated virtually free groups $\Gamma$, find a formula for $f_\Gamma(X, \mu)$ which avoids choosing a free subgroup of finite index.
\end{prob}

We point out that for amenable groups $H \leq G$ and Kolmogorov--Sinai entropy, it is true that $h_H(X, \mu) = |G : H| \cdot h_G(X, \mu)$ (here $h_H$ and $h_G$ are the Kolmogorov--Sinai entropies of the $H$ and $G$ actions), however this fact does not allow one to define a numerical invariant for virtual measure conjugacy among actions of amenable groups. In proving Corollary \ref{COR VFINV} we relied on a property which is possibly unique to finitely generated virtually free groups. The property we used is that if $\Gamma$ is finitely generated and virtually free, and $G$ and $H$ are free subgroups of $\Gamma$ of finite index, then $\frac{|\Gamma: G|}{|\Gamma : H|} = \frac{r(G) - 1}{r(H) - 1}$. So the ratio of the indices of $G$ and $H$ in $\Gamma$ can be determined from the internal structure of $G$ and $H$ alone; no knowledge of $\Gamma$ is required.

We now show that $\vf_\Gamma(X, \mu)$ is a complete invariant for virtual measure conjugacy among the Bernoulli shifts on which it is defined.

\begin{prop}
For $i = 1, 2$, let $(K_i^{\Gamma_i}, \mu_i^{\Gamma_i})$ be a Bernoulli shift over a finitely generated virtually free group $\Gamma_i$. If the virtual f-invariant entropy $\vf_{\Gamma_i}(K_i^{\Gamma_i}, \mu_i^{\Gamma_i})$ is defined for each $i$, then $(K_1^{\Gamma_1}, \mu_1^{\Gamma_1})$ is virtually measurably conjugate to $(K_2^{\Gamma_2}, \mu_2^{\Gamma_2})$ if and only if $\vf_{\Gamma_1}(K_1^{\Gamma_1}, \mu_1^{\Gamma_1}) = \vf_{\Gamma_2}(K_2^{\Gamma_2}, \mu_2^{\Gamma_2})$.
\end{prop}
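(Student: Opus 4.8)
The plan is to dispatch the forward implication immediately and then build an explicit virtual conjugacy for the reverse implication by reducing to Bowen's classification of Bernoulli shifts over free groups. For the forward direction, if the two actions are virtually measurably conjugate, then Corollary \ref{COR VFINV} (applied to these very actions) gives $\vf_{\Gamma_1}(K_1^{\Gamma_1},\mu_1^{\Gamma_1}) = \vf_{\Gamma_2}(K_2^{\Gamma_2},\mu_2^{\Gamma_2})$, since that corollary already established virtual f-invariant entropy to be a virtual measure conjugacy invariant. All of the content therefore lies in the reverse implication.

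I would first record how $\vf$ is computed for these Bernoulli shifts. Fix a free subgroup of finite index $G_i \leq \Gamma_i$, and set $n_i := |\Gamma_i : G_i|$. Decomposing $\Gamma_i$ into its $n_i$ right $G_i$-cosets and restricting the shift to $G_i$ shows that $G_i \acts (K_i^{\Gamma_i}, \mu_i^{\Gamma_i})$ is itself a Bernoulli shift over the free group $G_i$ with base space $(K_i^{n_i}, \mu_i^{\otimes n_i})$. Bowen's formula for the f-invariant entropy of a Bernoulli shift \cite{B10a} then gives $\f_{G_i}(K_i^{\Gamma_i}, \mu_i^{\Gamma_i}) = n_i \cdot \sH(\mu_i)$, so, writing $\rho_i := r(G_i) - 1 \geq 1$ (positive since $\Gamma_i$ is not virtually $\Z$),
\[ \vf_{\Gamma_i}(K_i^{\Gamma_i},\mu_i^{\Gamma_i}) = \frac{n_i}{\rho_i}\,\sH(\mu_i). \]
The hypothesis $\vf_{\Gamma_1} = \vf_{\Gamma_2}$ thus becomes, after cross-multiplying, $n_1 \rho_2 \sH(\mu_1) = n_2 \rho_1 \sH(\mu_2)$.

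The key maneuver is to pass to deeper free subgroups of equal rank. Each $G_i$ is free of rank at least $2$, so it has a subgroup of any prescribed finite index; I would take $|G_1 : G_1'| = \rho_2$ and $|G_2 : G_2'| = \rho_1$. Each $G_i'$ is free by Nielsen--Schreier and of finite index $n_i\,|G_i:G_i'|$ in $\Gamma_i$, and Lemma \ref{LEM RANK} applied to $G_i' \leq G_i$ yields $r(G_i') - 1 = |G_i:G_i'|\cdot \rho_i$, whence $r(G_1') - 1 = \rho_1 \rho_2 = r(G_2') - 1$. Thus $G_1'$ and $G_2'$ are free of the same rank, hence isomorphic. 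Restricting the $\Gamma_i$ shift to $G_i'$ gives a Bernoulli shift over $G_i'$ with base Shannon entropy $|\Gamma_i:G_i'|\cdot \sH(\mu_i) = n_i\,|G_i:G_i'|\,\sH(\mu_i)$; the two values are $n_1 \rho_2 \sH(\mu_1)$ and $n_2 \rho_1 \sH(\mu_2)$, which coincide by the cross-multiplied hypothesis. So the two restricted actions are Bernoulli shifts over isomorphic free groups with the same f-invariant entropy, which is finite because $\vf_{\Gamma_i}$ being defined forces $\sH(\mu_i) < \infty$. Bowen's classification of Bernoulli shifts over finitely generated free groups \cite{B10a} then provides a measure conjugacy intertwining a fixed isomorphism $G_1' \to G_2'$, and since each $G_i' \leq \Gamma_i$ has finite index this is precisely a virtual measure conjugacy between the original actions.

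I expect the only genuine obstacle to be the simultaneous control of rank and base entropy in this reduction: Lemma \ref{LEM RANK} ties the rank of a finite-index free subgroup rigidly to its index, so one cannot adjust rank and entropy independently. The point that makes everything fit is that the equality $\vf_{\Gamma_1} = \vf_{\Gamma_2}$ couples these two quantities exactly, so that the symmetric choice of indices $\rho_2$ and $\rho_1$ — made to equalize the ranks — automatically equalizes the base entropies. The degenerate case $\sH(\mu_i) = 0$ causes no trouble, since then both base entropies vanish and the same conclusion applies.
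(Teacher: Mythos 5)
Your proposal is correct and follows essentially the same route as the paper: both pass to free subgroups with the crossed indices $|G_1:G_1'| = r(G_2)-1$ and $|G_2:G_2'| = r(G_1)-1$ so that Lemma \ref{LEM RANK} equalizes the ranks, observe that the restricted actions are Bernoulli shifts with equal f-invariant entropy, and invoke Bowen's completeness theorem. The only (cosmetic) difference is that you verify the entropy equality by explicitly computing base entropies $n_i\,\sH(\mu_i)$ via Bowen's Bernoulli formula and cross-multiplying, whereas the paper obtains the same chain of equalities directly from the definition of $\vf_{\Gamma_i}$ and its independence of the chosen finite-index free subgroup.
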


\begin{proof}
By Corollary \ref{COR VFINV}, it is necessary that the virtual f-invariant entropies of these actions agree. So suppose that they have the same virtual f-invariant entropy. We must show that the actions are virtually measurably conjugate.

For each $i$, pick a free subgroup $G_i \leq \Gamma_i$ of finite index. Let $H_1$ be a subgroup of $G_1$ with $|G_1 : H_1| = r(G_2) - 1$, and let $H_2$ be a subgroup of $G_2$ with $|G_2 : H_2| = r(G_1) - 1$. Such subgroups exist since $G_1$ and $G_2$ are finitely generated free groups. Then by Lemma \ref{LEM RANK} we have
$$r(H_1) - 1 = |G_1 : H_1| (r(G_1) - 1) = (r(G_2) - 1)(r(G_1) - 1)$$
$$= (r(G_2) - 1)|G_2 : H_2| = r(H_2) - 1.$$
Thus $H_1$ is group isomorphic to $H_2$.

If $\Delta_i$ is a transversal of the right $H_i$-cosets in $\Gamma_i$, then $H_i \acts (K_i^{\Gamma_i}, \mu_i^{\Gamma_i})$ is measurably conjugate to the Bernoulli shift $H_i \acts ((K_i^{\Delta_i})^{H_i}, (\mu_i^{\Delta_i})^{H_i})$. So we have
$$f_{H_1} \left((K_1^{\Delta_1})^{H_1}, (\mu_1^{\Delta_1})^{H_1} \right) = f_{H_1} \left(K_1^{\Gamma_1}, \mu_1^{\Gamma_1} \right) = \left(r(H_1) - 1 \right) \cdot \vf_{\Gamma_1} \left(K_1^{\Gamma_1}, \mu_1^{\Gamma_1} \right)$$
$$= \left(r(H_2) - 1 \right) \cdot \vf_{\Gamma_2} \left(K_2^{\Gamma_2}, \mu_2^{\Gamma_2} \right) = f_{H_2} \left(K_2^{\Gamma_2}, \mu_2^{\Gamma_2} \right) = f_{H_2} \left((K_2^{\Delta_2})^{H_2}, (\mu_2^{\Delta_2})^{H_2} \right).$$
Since $H_1$ is group isomorphic to $H_2$ and f-invariant entropy is a complete invariant for measure conjugacy among the Bernoulli shifts on which it is defined, we have (below $\cong$ denotes the measure conjugacy equivalence relation)
$$H_1 \acts (K_1^{\Gamma_1}, \mu_1^{\Gamma_1}) \cong H_1 \acts ((K_1^{\Delta_1})^{H_1}, (\mu_1^{\Delta_1})^{H_1})$$
$$\cong H_2 \acts ((K_2^{\Delta_2})^{H_2}, (\mu_2^{\Delta_2})^{H_2}) \cong H_2 \acts (K_2^{\Gamma_2}, \mu_2^{\Gamma_2}).$$
Thus the actions of $\Gamma_1$ and $\Gamma_2$ are virtually measurably conjugate as claimed.
\end{proof}

\thebibliography{9}

\bibitem{BB}
B\'{e}la Bollob\'{a}s,
\textit{Modern Graph Theory}. Springer, New York, 1998.

\bibitem{B10a}
L. Bowen,
\textit{A new measure conjugacy invariant for actions of free groups}, Annals of Mathematics 171 (2010), no. 2, 1387--1400.

\bibitem{B10b}
L. Bowen,
\textit{Measure conjugacy invariants for actions of countable sofic groups}, Journal of the American Mathematical Society 23 (2010), 217--245.

\bibitem{B10c}
L. Bowen,
\textit{The ergodic theory of free group actions: entropy and the f-invariant}, Groups, Geometry, and Dynamics 4 (2010), no. 3, 419--432.

\bibitem{B10d}
L. Bowen,
\textit{Nonabelian free group actions: Markov processes, the Abramov--Rohlin formula and Yuzvinskii's formula}, Ergodic Theory and Dynamical Systems 30 (2010), no. 6, 1629--1663.

\bibitem{B10e}
L. Bowen,
\textit{Weak isomorphisms between Bernoulli shifts}, Israel J. of Math. (2011) 183, no. 1, 93--102.

\bibitem{Ba}
L. Bowen,
\textit{Sofic entropy and amenable groups}, to appear in Ergodic Theory and Dynamical Systems.

\bibitem{BG}
L. Bowen and Y. Gutman,
\textit{A Juzvinskii addition theorem for finitely generated free group actions}, preprint. http://arxiv.org/abs/1110.5029.

\bibitem{C}
N.P. Chung,
\textit{The variational principle of topological pressures for actions of sofic groups}, preprint. http://arxiv.org/abs/1110.0699.

\bibitem{D01}
A. I. Danilenko,
\textit{Entropy theory from the orbital point of view}, Monatsh. Math. 134 (2001), 121--141.

\bibitem{Gl03}
E. Glasner,
\textit{Ergodic theory via joinings}. Mathematical Surveys and Monographs, 101. American Mathematical Society, Providence, RI, 2003. xii+384 pp.

\bibitem{Ha00}
P. de la Harpe,
Topics in Geometric Group Theory. First edition. University of Chicago Press, Chicago, 2000.

\bibitem{KPS73}
A. Karrass, A. Pietrowski, and D. Solitar,
\textit{Finite and infinite cyclic extensions of free groups}, J. Austral. Math. Soc. 16 (1973), 458--466.

\bibitem{Ke}
D. Kerr,
\textit{Sofic measure entropy via finite partitions}, preprint. http://arxiv.org/abs/1111.1345.

\bibitem{KL}
D. Kerr and H. Li,
\textit{Soficity, amenability, and dynamical entropy}, to appear in Amer. J. Math.

\bibitem{KL11a}
D. Kerr and H. Li,
\textit{Entropy and the variational principle for actions of sofic groups}, Invent. Math. 186 (2011), 501--558.

\bibitem{KL11b}
D. Kerr and H. Li,
\textit{Bernoulli actions and infinite entropy}, Groups Geom. Dyn. 5 (2011), 663--672.

\bibitem{Ko58}
A.N. Kolmogorov,
\textit{New metric invariant of transitive dynamical systems and endomorphisms of Lebesgue spaces}, (Russian) Doklady of Russian Academy of Sciences 119 (1958), no. 5, 861--864.

\bibitem{Ko59}
A.N. Kolmogorov,
\textit{Entropy per unit time as a metric invariant for automorphisms}, (Russian) Doklady of Russian Academy of Sciences 124 (1959), 754--755.

\bibitem{LS77}
R. Lyndon and P. Schupp,
Combinatorial Group Theory. Springer-Verlag, New York, 1977.

\bibitem{MKS}
W. Magnus, A. Karrass, and D. Solitar,
Combinatorial Group Theory: Presentations of Groups in Terms of Generators and Relations, revised edition. Dover Publications Inc., New York, 1976.

\bibitem{O70a}
D. Ornstein,
\textit{Bernoulli shifts with the same entropy are isomorphic}, Advances in Math. 4 (1970), 337--352.

\bibitem{O70b}
D. Ornstein,
\textit{Two Bernoulli shifts with infinite entropy are isomorphic}, Advances in Math. 5 (1970), 339--348.

\bibitem{Z}
G. H. Zhang,
\textit{Local variational principle concerning entropy of a sofic group action}, preprint. http://arxiv.org/abs/1109.3244.

\bibitem{ZC}
X. Zhou and E. Chen,
\textit{The variational principle of local pressure for actions of sofic group}, preprint. http://arxiv.org/abs/1112.5260.

\end{document}